\newcommand{\PrintMathFonts}{%
  \count255=0
  \loop\ifnum\count255<16
    (\the\count255:~\fontname\textfont\count255)
    \advance\count255 by 1
 \repeat}
\newcommand{\Aut}{\operatorname{Aut}}
\newcommand{\del}[2]{\frac{\partial #1}{\partial #2}}
\def\G{\mathbb{G}}
\def\CP{\mathbb{CP}}
\def\cL{\mathcal{L}}
\def\hookar{\ar@{^{(}->}}
\def\spec{\mathsf{Spec}}
\def\val{\mathrm{val}}
\def\bbA{\tilde{\mathsf{A}}}
\def\bfA{\tilde{\mathbb{A}}}
\def\aup{\mathbb{A}}
\def\bdg{\tilde{\mathsf{B}}^{\mathsf{dg}}}
\def\bbdg{\tilde{\mathbb{B}}^{\mathsf{dg}}}
\def\bbB{\tilde{\mathsf{B}}}
\def\bfB{\tilde{\mathbb{B}}}
\def\bup{\mathbb{B}}
\def\ainf{A_\infty}
\def\grmf{\mathsf{GrMF}}
\def\cl{0}
\def\snc{\mathsf{snc}}
\def\sh{\mathsf{sh}}
\def\co{\mathsf{co}}
\def\Runcomp{\wt{R}}
\def\fmuncomp{\wt{\fm}}
\def\fuk{\EuF}
\def\bc{\mathsf{bc}}
\def\QH{\mathsf{QH}}
\def\dm{b}
\def\auR{\aup}
\def\buR{\bup}
\def\bdgR{\bdg}
\def\bbbR{\bbB}
\def\bbdgR{\bbdg}
\def\bfbR{\bfB}
\def\Nef{\mathsf{N}}
\def\pole{v}
\newcommand{\fm}{\mathfrak{m}}
\newcommand{\ol}[1]{\overline{#1}}
\newcommand{\til}[1]{\tilde{#1\,}\!}
\newcommand{\wt}[1]{\widetilde{#1}}
\def\cM{\mathcal{M}}
\def\G{\mathbb{G}}
\def\and{\, \& \,}
\def\r{\mathrm{r}}
\def\cM{\mathcal{M}}
\def\nov{r}
\def\lcm{\mathsf{lcm}}
\renewcommand{\vec}[1]{\mathsf{#1}}
\def\Zmir{\check{Z}}
\def\Xmir{\check{X}}
\numberwithin{equation}{section}
\newtheorem{conjmain}[mainthm]{Conjecture}
\newtheorem{cormain}[mainthm]{Corollary}
\begin{document}

\title{Homological mirror symmetry for generalized Greene--Plesser mirrors}

\author[Sheridan and Smith]{Nick Sheridan and Ivan Smith}

\begin{abstract}
\textsc{Abstract:} We prove Kontsevich's homological mirror symmetry conjecture for certain mirror pairs arising from Batyrev--Borisov's `dual reflexive Gorenstein cones' construction. 
In particular we prove HMS for all Greene--Plesser mirror pairs (i.e., Calabi--Yau hypersurfaces in quotients of weighted projective spaces). 
We also prove it for certain mirror Calabi--Yau complete intersections arising from Borisov's construction via dual nef partitions, and also for certain Calabi--Yau complete intersections which do not have a Calabi--Yau mirror, but instead are mirror to a Calabi--Yau subcategory of the derived category of a higher-dimensional Fano variety. 
The latter case encompasses Kuznetsov's `$K3$ category of a cubic fourfold', which is mirror to an honest $K3$ surface; and also the analogous category for a quotient of a cubic sevenfold by an order-$3$ symmetry, which is mirror to a rigid Calabi--Yau threefold.
\end{abstract}

\maketitle

\section{Introduction}
\label{sec:bat}

\subsection{Toric mirror constructions}

One of the first constructions of mirror pairs of Calabi--Yau varieties was due to Greene and Plesser \cite{Greene1990}. 
They considered Calabi--Yau hypersurfaces in quotients of weighted projective spaces. 
They were interested in the three-dimensional case, but their construction works just as well in any dimension. 

Batyrev generalized this to a construction of mirror pairs of Calabi--Yau hypersurfaces in toric varieties \cite{Batyrev1993}. 
In Batyrev's construction one considers dual reflexive lattice polytopes $\Delta$ and $\check{\Delta}$, corresponding to toric varieties $Y$ and $\check{Y}$.
Batyrev conjectured that Calabi--Yau hypersurfaces in $Y$ and $\check{Y}$ ought to be mirror. 
His construction reduces to Greene--Plesser's in the case that $\Delta$ and $\check{\Delta}$ are simplices.
Borisov generalized Batyrev's construction to encompass mirror pairs of Calabi--Yau complete intersections in toric varieties \cite{Borisov1993}. 

However, certain examples in the literature not encompassed by the Batyrev-Borisov construction  suggested that some Calabi--Yau complete intersections might not admit any Calabi--Yau mirror, but might nevertheless be mirror in some generalized sense to a higher-dimensional Fano variety, which one considers to be a `generalized Calabi--Yau'.\footnote{It has since been understood that in these contexts, the derived category of the higher-dimensional Fano variety admits a semi-orthogonal decomposition, one interesting component of which is Calabi--Yau, which means it looks like it could be the derived category of some honest Calabi--Yau variety (see \cite{Kuznetsov2015}). 
We think of the `generalized Calabi--Yau variety' as having derived category equal to this Calabi--Yau category.}
For example, Candelas, Derrick and Parkes considered a certain rigid Calabi--Yau threefold, and showed that it should be mirror to a quotient of a cubic sevenfold by an order-3 symmetry group \cite{Candelas1993}.

Batyrev and Borisov succeeded in generalizing their constructions to include these generalized Calabi--Yau varieties.
They constructed mirror pairs of Landau--Ginzburg models, depending on dual pairs of `reflexive Gorenstein cones' \cite{Batyrev1994}. 
They showed that a reflexive Gorenstein cone equipped with a `complete splitting' determines a Calabi--Yau complete intersection in a toric variety, which should be equivalent to the Landau--Ginzburg model via the `Landau--Ginzburg/Calabi--Yau correspondence'. 
Borisov's previous construction was equivalent to the new one in the case that both cones were completely split.
However it may happen that a reflexive Gorenstein cone is completely split, but its dual is not; in this case the dual will correspond to some generalized Calabi--Yau variety.

In this paper, we prove that certain special cases of Batyrev--Borisov's construction (which we call \emph{generalized Greene--Plesser mirrors}) satisfy an appropriate version of Kontsevich's homological mirror symmetry conjecture. 
The rest of the introduction is organized as follows: we give the construction of generalized Greene--Plesser mirrors in \S\S \ref{subsec:tordata}--\ref{subsec:Bintro}; we formulate a version of Kontsevich's homological mirror symmetry conjecture for generalized Greene--Plesser mirrors in \S \ref{subsec:hmsnomap}; we state our main result, which constitutes a proof of the conjecture (contingent in some cases on certain technical assumptions), in \S \ref{subsec:mainthm}. Three running examples used to illustrate the constructions are returned to in \S \ref{subsec:eg}.  
In particular, we remark that generalized Greene--Plesser mirrors include all Greene--Plesser mirrors, and work through the case of the quartic surface and its mirror (in the `reverse' direction from that considered in \cite{Seidel2003}); we also consider some examples which do not arise from the Greene--Plesser construction, including the rigid Calabi--Yau threefold mentioned above, as well as a certain $K3$ surface which is mirror to Kuznetsov's `$K3$ category associated to the cubic fourfold' \cite{Kuznetsov2010}. 

\subsection{Toric data}
\label{subsec:tordata}

In this section we give the toric data on which our construction of generalized Greene--Plesser mirrors depends. 
We start by recalling some terminology used in the Batyrev--Borisov construction, following \cite{Batyrev1994,Batyrev2007}. 

If $\ol{M}$ and $\ol{N}$ are dual lattices, and if $\sigma \subset \ol{M}_{\R}$ is a rational finite polyhedral cone with vertex $0$, then its dual cone $\check{\sigma} = \{ \vec{y} \in \ol{N}_{\R} : \langle \vec{x},\vec{y}\rangle \geq 0 \ \forall \, \vec{x} \in \sigma\}$. 
Assume both $\sigma$ and $\check{\sigma}$ are full-dimensional.
 We say $\sigma$ is \emph{Gorenstein} if it is generated by finitely many lattice points contained in an affine hyperplane $\{ \vec{x}\in \ol{M}_{\R} : \langle \vec{x},\vec{n}_{\sigma}\rangle =1\}$ for a (then uniquely determined) lattice point $\vec{n}_{\sigma} \in \mathrm{int}(\check{\sigma})$.  We say $\sigma$ is \emph{reflexive} if $\check{\sigma}$ is also Gorenstein; if $\vec{m}_{\check{\sigma}}$ is the corresponding lattice point in $\mathrm{int}(\sigma)$, then $\sigma$ is Gorenstein of \emph{index} $r = \langle \vec{n}_{\sigma}, \vec{m}_{\check{\sigma}}\rangle$.  

The \emph{support} of a Gorenstein cone $\sigma$ is the convex polytope $\ol{\Delta} = \{\vec{m} \in \sigma : \langle \vec{n}_{\sigma},\vec{m}\rangle = 1\}$. We will denote the support of $\check{\sigma}$ by $\ol{\nabla}$.
If $\sigma$ is reflexive Gorenstein of index $r$, then a \emph{complete splitting} for $\sigma$ is a choice of elements $\check{\vec{q}}_1,\ldots, \check{\vec{q}}_r \in \ol{\nabla} \cap \ol{N}$ with $\vec{n}_{\sigma} = \check{\vec{q}}_1+\cdots+\check{\vec{q}}_r$ (this definition of complete splitting is equivalent to the one in \cite{Batyrev1994} by \cite[Corollary 2.5]{Batyrev2007}).

Let $I_1,\ldots,I_r$ be finite sets with $|I_j| \ge 3$ for all $j$, and let $I := I_1 \sqcup \ldots \sqcup I_r$. 
Let $\vec{d} \in (\Z_{>0})^I$ be a tuple of positive integers such that $\sum_{i \in I_j} 1/d_i = 1$ for all $j$.
We denote $d := \lcm(d_i)$, and let $\vec{q} \in (\Z_{>0})^I$ be the vector with entries $q_i := d/d_i$. 
Let $\vec{e}_i$ be the standard basis of $\Z^I$, and denote $\vec{e}_K := \sum_{i \in K} \vec{e}_i$ for a subset $K\subset I$.

Let $\ol{M} \subset \Z^I$ be a sublattice such that
\begin{itemize}
\item $\ol{M}$ contains $d_i \vec{e}_i$ for all $i$ and $\vec{e}_{I_j}$ for all $j$. 
\item $d|\langle \vec{q}, \vec{m} \rangle$ for all $\vec{m} \in \ol{M}$.
\end{itemize}
We explain how these data give rise to a pair of dual reflexive Gorenstein cones.

The dual lattice to $\ol{M}$ is
\begin{equation} \ol{N} := \{\vec{n} \in \R^I: \langle \vec{n},\vec{m} \rangle \in \Z \text{ for all $\vec{m} \in \ol{M}$}\},\end{equation}
and it includes the element $\vec{n}_\sigma := \vec{q}/d$.
Let $\sigma := (\R_{\ge 0})^I \subset \R^I \cong \ol{M}_\R$. 
This cone is Gorenstein (with respect to the lattice $\ol{M}$), because it is generated by the vectors $d_i \vec{e}_i$ which all lie on the affine hyperplane $\{\vec{m}: \langle \vec{n}_\sigma,\vec{m} \rangle = 1\}$.

The dual cone is $\check{\sigma} = (\R_{\ge 0})^I \subset \R^I$.
It is Gorenstein (with respect to the lattice $\ol{N}$), because it is generated by the vectors $\vec{e}_i$ which all lie on the hyperplane $\{\vec{n}: \langle \vec{m}_{\check{\sigma}}, \vec{n} \rangle = 1 \}$ where $\vec{m}_{\check{\sigma}} := \vec{e}_I$. 
Therefore $\sigma$ and $\check{\sigma}$ are dual reflexive Gorenstein cones, of index $\langle \vec{n}_\sigma, \vec{m}_{\check{\sigma}} \rangle = r$.

The support
\begin{equation}\ol{\Delta} = \{\vec{m} \in \sigma: \langle \vec{n}_\sigma, \vec{m} \rangle = 1 \}\end{equation}
is the convex hull of the vectors $d_i\vec{e}_i$, and hence a simplex. 
Let $\Xi:= \ol{\Delta} \cap \ol{M}$, and 
\begin{equation} \Xi_0 := \{ \vec{p} \in \Xi: p_i = 0 \text{ for at least two $i \in I_j$, for all $j$} \}.\end{equation}
Our constructions will depend on one further piece of data, which is a vector  $\lambda \in (\R_{>0})^{\Xi_0}$.
This is now the complete set of data on which our constructions depend: the sets $I_j$, the vector $\vec{d}$, the sublattice $\ol{M}$, and the vector $\lambda$ (we will later put additional conditions on the data).

The decomposition $\vec{m}_{\check{\sigma}} = \sum_j \vec{e}_{I_j}$ determines a complete splitting of the cone $\check{\sigma}$.  Therefore it determines a codimension $r$ Calabi--Yau complete intersection in a toric variety, in accordance with \cite{Batyrev1994}. 
This complete intersection may be singular, but under certain hypotheses the vector $\lambda$ determines a maximal projective crepant desingularization $X$ (in the sense of \cite{Batyrev1993}), together with a K\"ahler form $\omega_\lambda$. 
We describe the construction explicitly in \S \ref{subsec:Aintro}.

We associate a graded Landau--Ginzburg model $(S,W)$ to the reflexive Gorenstein cone $\sigma$. 
This cone is completely split if and only if the nef-partition condition holds (see Definition \ref{defn:nefpart} below); in that case we have an associated Calabi--Yau complete intersection $\Xmir$. 
The nef-partition condition is automatic if $r=1$. 
If $r>1$, then whether or not the nef-partition condition holds we have an associated Fano hypersurface $\Zmir$ (of dimension greater than that of $X$). 
We describe the constructions explicitly in \S \ref{subsec:Bintro}.

Now let us explain when the reflexive Gorenstein cone $\sigma$ is completely split.
We define a map
\begin{align}
\iota: \Z^I & \to \R^I \\
\nonumber \iota(\vec{e}_i) &= \frac{1}{d_i} \vec{e}_i.
\end{align}

\begin{defn}
\label{defn:nefpart}
We say that the \emph{nef-partition condition} holds if $\iota(\vec{e}_{I_j}) \in \ol{N}$ for all $j$. 
\end{defn}

When the nef-partition condition holds, we have $\vec{n}_{\sigma} = \iota(\vec{e}_I) = \sum_j \iota(\vec{e}_{I_j})$, so $\sigma$ is completely split. 
It is not difficult to show that this is the only way that $\sigma$ can be completely split.
In this situation we have a symmetry of our data which exchanges $\ol{M} \leftrightarrow \ol{N}$, where $\ol{N}$ is regarded as a sublattice of $\Z^I \cong \im(\iota)$. 

\begin{rmk}
The case $r=1$ will be the Greene--Plesser construction, which we recall is a special case of Batyrev's construction in terms of dual reflexive polytopes. 
The reflexive polytopes in this case are the simplex $\ol{\Delta}$ (with lattice $\ol{M} \cap \ol{\Delta}$) and its polar dual $\ol{\nabla} := \{\vec{n} \in \check{\sigma}: \langle \vec{m}_{\check{\sigma}}, \vec{n} \rangle = 1 \}$ (with lattice $\ol{N} \cap \ol{\nabla})$. 
The nef-partition condition always holds in this case.
\end{rmk}

\begin{defn}
\label{defn:embcond}
Let $V \subset \Z^I$ be the set of vertices of the unit hypercube $[0,1]^I$. 
We say that the \emph{embeddedness condition} holds if 
\begin{equation} \ol{M} \cap V \subset \langle\vec{e}_{I_j}\rangle.\end{equation}
\end{defn}

\begin{rmk}
The embeddedness condition always holds in the Greene--Plesser case $r=1$. 
That is because, if $\vec{e}_K \in \ol{M} \cap V$, we have $\langle \vec{n}_\sigma, \vec{e}_K \rangle \in \Z$ because $\vec{e}_K \in \ol{M}$; on the other hand, for $\emptyset \subsetneq K \subsetneq I$ we have
\begin{equation} 0 = \langle \vec{n}_\sigma, \vec{e}_\emptyset \rangle < \langle \vec{n}_\sigma, \vec{e}_K \rangle < \langle \vec{n}_\sigma, \vec{e}_I \rangle = 1.\end{equation}
\end{rmk}

\subsubsection{Running example: the quartic}
\label{Subsubsec:mirror_quartic}

We describe the toric data giving rise to the mirror pair $(X,\Xmir)$, where $X$ is a `mirror quartic' $K3$ surface and $\Xmir$ is a quartic $K3$ surface.

Take $r=1$, $I = I_1 = \{1,2,3,4\}$, $\vec{d} = 4\vec{e}_I$, and $\ol{M}:= \{m \in \Z^4: \sum_i m_i \equiv 0\text{ (mod $4$)}\}$.
The simplex $\ol{\Delta}$ is illustrated in Figure \ref{fig:mq}: it is the convex hull of the vectors $4 \vec{e}_i$. 
The set $\Xi_0$ is also illustrated: it consists of all lattice points $\vec{p} = (p_1,p_2,p_3,p_4)$ with $p_i \in \Z_{\ge 0}$ such that $\sum_i p_i = 4$ and at least two of the $p_i$ are $0$. 
In other words, $\Xi_0$ consists of those 22 lattice points that lie at the vertices or on the edges of $\ol{\Delta}$. 
The nef-partition and embeddedness conditions both hold in this case, as $r=1$. 

\begin{figure}
\begin{center}
\includegraphics[width=0.6\textwidth]{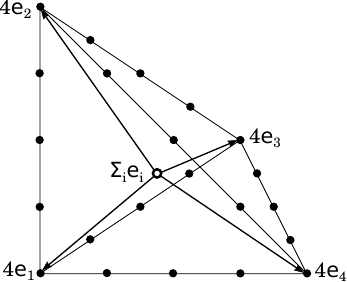}
\end{center}
\caption{The simplex $\ol{\Delta}$, with the set $\Xi_0$ labelled as solid points and the centroid $\vec{e}_I = \sum_i \vec{e}_i$ labelled with an empty point.  
The rays of the fan $\Sigma'$ are also illustrated; the refinement $\Sigma_\lambda$ has rays in the directions of all elements of $\Xi_0$. \label{fig:mq}}
\end{figure}

\subsubsection{Running example: the cubic fourfold}
\label{Subsubsec:cubic_fourfold}

We describe the toric data giving rise to the mirror pair $(X,\Zmir)$, where $X$ is a certain $K3$ surface and $\Zmir$ is a cubic fourfold (which is a `generalized Calabi--Yau').

Take $r=2$, $I=I_1 \sqcup I_2 = \{1,2,3\} \sqcup \{4,5,6\}$, $\vec{d} = 3\vec{e}_I$, and  $\ol{M}:= \{\vec{m} \in \Z^6: \sum_i m_i \equiv 0\text{ (mod $3$)}\}$. 
The simplex $\ol{\Delta}$ is the convex hull of the vectors $3 \vec{e}_i \in \Z^6$. 
$\Xi_0$ is the set of lattice points $(p_1,\ldots,p_6) \in (\Z_{\ge 0})^6$ such that $\sum_i p_i = 3$, at most one of $(p_1,p_2,p_3)$ is non-zero, and at most one of $(p_4,p_5,p_6)$ is non-zero. 
We have $|\Xi_0| = 24$.  These toric data do not satisfy the nef-partition condition, as $\iota(\vec{e}_{I_1}) \notin \ol{N}$ (for example, $\langle \iota(\vec{e}_{I_1}), \vec{e}_1 - \vec{e}_4 \rangle = 1/3 \notin \Z$). 
Nor do they satisfy the embeddedness condition (for example, $\vec{e}_{\{1,4,5\}} \in V \cap \ol{M}$ does not lie in the span of $\vec{e}_{\{1,2,3\}}$ and $\vec{e}_{\{4,5,6\}}$).

\subsubsection{Running example: the $Z$-manifold}
\label{Subsubsec:Z-manifold}

We describe the toric data giving rise to the mirror pair $(X,\Zmir)$, where $X$ is the `$Z$-manifold' (a certain rigid Calabi--Yau threefold described in \cite{Candelas1993}) and $\Zmir$ is a quotient of a cubic sevenfold by an order-3 symmetry group (which is a `generalized Calabi--Yau').

Take $r=3$, $I=I_1 \sqcup I_2 \sqcup I_3 = \{1,2,3\} \sqcup \{4,5,6\} \sqcup \{7,8,9\}$, $\vec{d} = 3\vec{e}_I$, and 
\begin{equation} \ol{M} := \{ \vec{m} \in \Z^9: m_1+m_2+m_3\equiv m_4+m_5+m_6 \equiv m_7+m_8+m_9 \text{ (mod $3$)}\}.\end{equation}
The simplex $\ol{\Delta}$ is the convex hull of the vectors $3 \vec{e}_i \in \Z^9$. 
We have
\begin{equation}\Xi_0 = \{3\vec{e}_i\}_{1 \le i \le 9} \cup \{\vec{e}_i + \vec{e}_j + \vec{e}_k \}_{i \in I_1, j \in I_2,k \in I_3},\end{equation}
with $|\Xi_0| = 36$. Neither the nef-partition nor the embeddedness conditions hold.

\subsection{Symplectic construction}
\label{subsec:Aintro}

The elements $\vec{e}_{I_j}$ define an embedding
\begin{equation}
\Z^r \hookrightarrow \ol{M} \hookrightarrow \Z^I,
\end{equation}
which induces an embedding
\begin{equation}
\ol{M}/\Z^r =: M \hookrightarrow \til{M} := \Z^I/\Z^r.
\end{equation}
Note that $\til{M} = \til{M}_1 \times \ldots \times \til{M}_r$ where $\til{M}_j := \Z^{I_j}/\vec{e}_{I_j}$.
Each $\til{M}_j$ supports a complete fan $\tilde{\Sigma}'_j$ whose rays are generated by the images of the basis vectors $\vec{e}_i$ for $i \in I_j$. 
The corresponding toric variety is $\til{Y}'_j \cong \mathbb{P}^{|I_j|-1}$.
We denote the product fan in $\til{M}$ by $\tilde{\Sigma}'$, which is the fan of the product of projective spaces $\til{Y}' := \til{Y}'_1 \times \ldots \times \til{Y}'_r$.

Let $\pi: \R^I \to \til{M}_\R$ denote the projection. 
Let $\psi_\lambda: \til{M}_\R \to \R$ be the smallest convex function such that $\psi_\lambda(t \cdot \pi(\vec{p})) \ge -t\cdot \lambda_{\vec{p}}$ for all $t \in \R_{\ge 0}$ and all $\vec{p} \in \Xi_0$. 
The decomposition of $\til{M}_\R$ into domains of linearity of $\psi_\lambda$ induces a fan $\tilde{\Sigma}_\lambda$. 

\begin{defn}
\label{defn:mpcp}
We say that the \emph{MPCP condition} holds if $\tilde{\Sigma}_\lambda$ is a projective simplicial refinement of $\tilde{\Sigma}'$ whose rays are generated by the projections of elements of $\Xi_0$.
\end{defn}

\begin{rmk}
MPCP stands for Maximal Projective Crepant Partial desingularization (see \cite[Definition 2.2.13]{Batyrev1993}). 
In the language of \cite[Section 6.2.3]{coxkatz}, the MPCP condition holds if and only if $\lambda$ lies in the interior of a top-dimensional cone $\text{cpl}(\tilde{\Sigma}_\lambda)$ of the secondary fan (or Gelfand--Kapranov--Zelevinskij decomposition) associated to $\pi(\Xi_0) \subset \til{M}_\R$, where $\tilde{\Sigma}_\lambda$ is a projective simplicial refinement of $\tilde{\Sigma}'$ whose rays are generated by $\pi(\Xi_0)$.
\end{rmk}

Now we consider the fans $\Sigma'$ and $\Sigma_\lambda$, which are the same as $\tilde{\Sigma}'$ and $\tilde{\Sigma}_\lambda$ except we equip the vector space $\til{M}_\R$ with the lattice $M$ rather than $\til{M}$. 

\begin{rmk}
If $r=1$ and the MPCP condition holds, $\Sigma_\lambda$ is called a \emph{simplified projective subdivision} of $\Sigma'$ in \cite[Definition 6.2.5]{coxkatz}. 
\end{rmk}

We have morphisms of fans $\Sigma_\lambda \to \Sigma' \to \tilde{\Sigma}'$, the first being a refinement and the second being a change of lattice.
It follows that we have toric morphisms $Y_\lambda \to Y' \to \til{Y}'$, the first being a blowdown and the second being a branched cover with covering group $G := \til{M}/M$. 
We consider the hyperplane
\begin{equation} \til{X}'_j := \left\{ \sum_{i \in I_j} z_i = 0\right\} \subset \til{Y}'_j\end{equation}
for all $j$, and denote $\til{X}' := \prod_j \til{X}'_j \subset \prod_j \til{Y}'_j$. 
We let $X' \subset Y'$ be the pre-image of $\til{X}'$, and $X \subset Y_\lambda$ the proper transform of $X'$. 
The intersection of $X$ with each toric stratum is a product of hypersurfaces of Fermat type, and in particular smooth; so if the MPCP condition holds then $X$ is a maximal projective crepant partial desingularization (hence the name of the condition).
Observe that the topology of $X$ may depend on $\lambda$, but we will omit this from the notation. 

Observe that even if the MPCP condition holds, $Y_\lambda$ may have finite quotient singularities, since $\Sigma_\lambda$ is only assumed to be simplicial. 
Therefore $X$ may also have finite quotient singularities. 
We would like to understand when $X$ is in fact smooth. 
Observe that for each $j$ we have a morphism $Y_\lambda \to \til{Y}' \to \til{Y}'_j$, where the last map is projection.
We denote the union (over all $j$) of the pre-images of toric fixed points in $\til{Y}'_j$ by $Y_{\lambda,0} \subset Y_\lambda$, and we observe that $X$ avoids $Y_{\lambda,0}$ because $\til{X}'_j$ avoids the toric fixed points of $\til{Y}'_j$. 

\begin{defn}
We say that the \emph{MPCS condition} holds if the MPCP condition holds, and furthermore $Y_\lambda$ is smooth away from $Y_{\lambda,0}$. 
MPCS stands for Maximal Projective Crepant Smooth desingularization.
\end{defn}

We remark that the MPCS and MPCP conditions are equivalent when $\dim_\C(Y_\lambda) \le 4$ (see \cite[\S 2.2]{Batyrev1993}).
If the MPCS condition holds, then $X$ avoids the non-smooth locus of $Y_\lambda$, so $X$ is in fact a smooth complete intersection. 
The fact that $\check{\sigma}$ is reflexive Gorenstein of index $r$ means that $X$ is Calabi--Yau by \cite[Corollary 3.6]{Batyrev1994}, in the weak sense that the canonical sheaf is holomorphically trivial. 

We denote the toric boundary divisor of $Y_\lambda$ by $D^Y$. 
Note that it has irreducible components $D^Y_{\vec{p}}$ indexed by $\vec{p} \in \Xi_0$. 
Let $D^Y_\lambda := \sum_{\vec {p} \in \Xi_0} \lambda_{\vec{p}} \cdot D^Y_{\vec{p}}$ be the toric $\R$-Cartier divisor with support function $\psi_\lambda$. 
Because $\psi_\lambda$ is strictly convex (by our assumption that its domains of linearity are the cones of the fan $\Sigma_\lambda$), this divisor is ample, so the first Chern class of the corresponding line bundle is represented in de Rham cohomology by an orbifold K\"ahler form (see discussion in \cite[\S 4]{Aspinwall1993b}). 
We denote the restriction of this orbifold K\"ahler form to $X$ by $\omega_{\lambda}$. 
Because $X$ avoids the non-smooth locus of $Y_\lambda$, $\omega_\lambda$ is an honest K\"ahler form. 
Its cohomology class is Poincar\'e dual to $\sum_{\vec {p} \in \Xi_0} \lambda_{\vec{p}} \cdot [D_\vec{p}]$, where $D_\vec{p} := X \cap D^Y_{\vec{p}}$.

\subsubsection{Running example: the quartic}
\label{Subsubsec:mirror_quartic_2}

We continue from Section \ref{Subsubsec:mirror_quartic}, and describe the `mirror quartic' $K3$ surface $X$.  We consider the lattice $M = \ol{M}/\vec{e}_I$ equipped with the complete fan $\Sigma'$ whose rays are spanned by the vectors $4 \vec{e}_i$. 
It is illustrated in Figure \ref{fig:mq}: since we quotient by $\vec{e}_I = \sum_i \vec{e}_i$, the central point is now regarded as the origin.
The corresponding singular toric variety $Y'$ is the quotient $\CP^3/H$, where 
\begin{equation} H := \ker\left( (\Z/4)^4 \xrightarrow{+} \Z/4 \right)/\langle\vec{e}_I\rangle.\end{equation}
The toric morphism $Y' \to \tilde{Y}'$ is the map 
\[
\CP^3/ H \to \CP^3, \qquad [z_1:z_2:z_3:z_4] \mapsto [z_1^4:z_2^4:z_3^4:z_4^4]
\] which is a branched covering with covering group $G = \Z/4$. 
The hypersurface $X' \subset Y'$ is the quotient of the Fermat hypersurface $\left\{\sum_i z_i^4 = 0\right\} \subset \CP^3$ by $H$. 

Now $X'$ is not smooth: it has six $A_3$ singularities where it hits the pairwise intersections of the components of the toric boundary divisor of $Y'$. 
We can resolve them by partially resolving the ambient toric variety.
We do this by refining the fan $\Sigma'$ to $\Sigma_\lambda$, which has rays spanned by vectors in the set $\Xi_0$. 
We define $X$ to be the proper transform of $X'$ in the corresponding partial toric resolution $Y_\lambda$ of $Y'$. 
We observe that the singularities of $Y_\lambda$ lie over the toric fixed points of $Y'$, which $X'$ avoids: so $X$ avoids the singularities and in fact is smooth (it is obtained from $X'$ by resolving the six $A_3$ singularities). 
We also observe that, while the toric variety $Y_\lambda$ depends on $\lambda$, the variety $X$ does not. 

We denote the intersections of $X$ with the components of the toric boundary divisor by $D_{\vec{p}} \subset X$, for $\vec{p} \in \Xi_0$. 
We choose a K\"ahler form $\omega_\lambda$ on $X$ whose cohomology class is Poincar\'e dual to $\sum_{\vec{p}} \lambda_{\vec{p}} \cdot [D_{\vec{p}}]$. 
Note that we have a 22-dimensional space of choices of $\lambda$; however the classes Poincar\'e dual to $[D_{\vec{p}}]$ only span a 19-dimensional space in $H^2(X)$, so up to symplectomorphism we get a 19-dimensional family of symplectic $K3$ surfaces.

\subsubsection{Running example: the cubic fourfold}
\label{Subsubsec:cubic_fourfold_2}

We continue from Section \ref{Subsubsec:cubic_fourfold}, and describe the $K3$ surface $X$ which is mirror to the cubic fourfold.  We consider the elliptic curve $E = \C/\langle 1, e^{2\pi i/6}\rangle$, with the order-3 symmetry generated by $z \mapsto \zeta \cdot z$ where $\zeta := e^{2\pi i/3}$. 
The quotient $E/(\Z/3)$ is isomorphic to $\til{X}'_1 \cong \til{X}'_2$: it is a sphere with three orbifold points of order $3$. 
We take the quotient of $E \times E$ by the anti-diagonal action of $\Z/3$: i.e., $(z_1,z_2) \mapsto (\zeta \cdot z_1, \zeta^{-1} \cdot z_2)$. 
This gives a surface $X'$ which has $9$ $A_2$ singularities: resolving them we get a $K3$ surface $X$ equipped with a divisor $D$ which has 24 irreducible components. 
We consider a K\"ahler form $\omega_\lambda$ on $X$ with cohomology class Poincar\'e dual to $\sum_{\vec{p}} \lambda_{\vec{p}} \cdot [D_{\vec{p}}]$. 
The classes Poincar\'e dual to $[D_{\vec{p}}]$ span a $20$-dimensional space in $H^2(X)$, so up to symplectomorphism we get a $20$-dimensional family of symplectic $K3$ surfaces.

\subsubsection{Running example: the $Z$-manifold}
\label{Subsubsec:Z-manifold_2}

We continue from Section \ref{Subsubsec:Z-manifold}, and describe the $Z$-manifold $X$.   It is a crepant resolution of the quotient $E \times E \times E/\Gamma^*$, where $E$ is as in \S \ref{Subsubsec:cubic_fourfold_2} and $\Gamma^* \cong \Z/3$ acts diagonally. The orbifold has $27$ $\C^3/(\Z/3)$ singularities, each of which admits a crepant resolution locally modelled on $\mathcal{O}_{\CP^2}(-3)$, so the resolution is equipped with a divisor having 36 irreducible components.  These span a 30-dimensional subspace of $H^2(X)$, so we obtain a 30-dimensional space of ambient K\"ahler forms.

\subsection{Algebraic construction}
\label{subsec:Bintro}

We work over the universal Novikov field:
\begin{equation} \Lambda := \left\{ \sum_{j=0}^\infty c_j \cdot q^{\lambda_j}: c_j \in \C, \lambda_j \in \R, \lim_{j \to \infty} \lambda_j = +\infty\right\}.\end{equation}
It is an algebraically closed field extension of $\C$. 
It has a valuation
\begin{align} 
val: \Lambda & \to \R \cup \{\infty\} \\
\label{eqn:valdef} val\left(\sum_{j=0}^\infty c_j \cdot q^{\lambda_j}\right) &:= \min_j\{\lambda_j: c_j \neq 0\}.
\end{align}

We consider the graded polynomial ring $S_\Lambda := \Lambda[z_i]_{i \in I}$, where $|z_i| = q_i$. 
We consider polynomials
\begin{equation}
\label{eqn:Wb}
W_\dm(z) := -\sum_{ j=1}^r z^{\vec{e}_{I_j}} + \sum_{\vec{p} \in \Xi_0} \dm_{\vec{p}} \cdot z^{\vec{p}},
\end{equation}
for $\dm =(\dm_{\vec{p}}) \in \mathbb{A}^{\Xi_0}$, which are weighted homogeneous of degree $d$. 

The dual to the group $G$ introduced in \S \ref{subsec:Aintro} is $G^* \cong \hom(\Z^I/\ol{M},\G_m)$, which acts torically on $\mathbb{A}^I$. 
The action preserves $W_\dm$, because all monomials $z^{\vec{p}}$ appearing in $W_\dm$ satisfy $\vec{p} \in \ol{M}$.  
Thus we have a Landau--Ginzburg model $([\mathbb{A}^I/G^*], W_\dm)$.

\begin{rmk}
Observe that we have a correspondence
\begin{equation} \text{monomial $z^{\vec{p}}$ of $W_\dm$} \leftrightarrow \text{divisor $D_\vec{p} \subset X$}.\end{equation} 
This correspondence is called the `monomial--divisor mirror map' (see \cite{Aspinwall1993b}).
\end{rmk}

Because $W_\dm$ is weighted homogeneous, its vanishing locus defines a hypersurface inside the weighted projective stack $\mathbb{WP}(\vec{q})$.
The action of $G^*$ descends to an action of $\Gamma := G^*/(\Z/d)$ on $\mathbb{WP}(\vec{q})$, preserving the hypersurface. 
We denote $\check{V} := [\mathbb{WP}(\vec{q})/\Gamma]$, and $\Zmir_\dm := \{W_\dm = 0\} \subset \check{V}$.

Now suppose that the nef-partition condition holds. 
Then the vectors $\iota(\vec{e}_{I_j}) \in \ol{N}$ define a map $\ol{M} \twoheadrightarrow \Z^r$ splitting the inclusion $\Z^r \hookrightarrow \ol{M}$, so we have $\ol{M} \cong \Z^r \oplus M$. 
We denote 
\begin{align}
\ol{\Delta}_{j} &:= \{\vec{m} \in \ol{\Delta}: \langle \iota(\vec{e}_{I_k}),\vec{m} \rangle = \delta_{jk}\}, \\
\Delta_j & := \pi\left(\ol{\Delta}_j\right) \subset M_\R, \\
\Delta & := \Delta_1 + \ldots + \Delta_r.
\end{align}
We denote the toric stack corresponding to the polytope $\Delta$ by $\check{Y}$, and the divisor corresponding to $\Delta_j$ by $\check{D}_j$. 
We define a section $W^j_\dm$ of $\mathcal{O}(\check{D}_j)$ by
\begin{equation} W^j_{\dm} := -z^{\vec{e}_{I_j}} + \sum_{\vec{p} \in \Xi_0 \cap \ol{\Delta}_j} \dm_{\vec{p}} \cdot z^{\pi(\vec{p})},\end{equation}
and let $s_\dm := W^1_\dm \oplus \ldots \oplus W^r_\dm$ be the corresponding section of $\mathcal{O}(\check{D}_1) \oplus \ldots \oplus \mathcal{O}(\check{D}_r)$. 
We finally denote $\Xmir_\dm := \{s_\dm = 0\} \subset \check{Y}$.
It is a Calabi--Yau complete intersection by \cite[Corollary 3.6]{Batyrev1994}, and it corresponds to the hypersurface $\Zmir_\dm$ in accordance with \cite[Section 3]{Batyrev1994}.

\subsubsection{Running example: the quartic}
\label{Subsubsec:mirror_quartic_3}

We continue from Sections \ref{Subsubsec:mirror_quartic} and \ref{Subsubsec:mirror_quartic_2}, and describe the family of quartic $K3$ surfaces $\Xmir$. We have 
\begin{equation}W_\dm(z_1,z_2,z_3,z_4) = -z_1z_2z_3z_4 + \sum_{\vec{p} \in \Xi_0} \dm_{\vec{p}} \cdot z^{\vec{p}}, \end{equation}
where $val(\dm_{\vec{p}}) = \lambda_{\vec{p}}$. 
We have $G \cong \Z/4$ in this case, and $\Gamma$ is trivial. 
So
\begin{equation} \Xmir_\dm = \{W_\dm = 0\} \subset \mathbb{P}^3_\Lambda\end{equation}
is a quartic $K3$ surface in projective $3$-space. 
By varying $\dm$ we get a 22-dimensional family of hypersurfaces; however the algebraic torus $\mathbb{G}_m^3$ acting on $\mathbb{P}^3$ preserves this family, so up to isomorphism we get a $19$-dimensional family of $K3$ surfaces.

\subsubsection{Running example: the cubic fourfold}
\label{Subsubsec:cubic_fourfold_3}

We continue from Sections \ref{Subsubsec:cubic_fourfold} and \ref{Subsubsec:cubic_fourfold_2}, and describe the family of cubic fourfolds $\Zmir$.
The group $\Gamma$ is trivial, so 
\begin{equation}
\label{eqn:cub4}
 \Zmir_\dm = \left\{-z_1z_2z_3 - z_4z_5z_6 + \sum_{\vec{p} \in \Xi_0} \dm_{\vec{p}} z^{\vec{p}} = 0\right\} \subset \mathbb{P}^5_\Lambda
\end{equation}
is a cubic fourfold.\footnote{We observe that the central fibre of this family is $\{z_1z_2z_3 + z_4z_5z_6 = 0\}$, known in the classical literature as the `Perazzo primal' \cite{Perazzo1901} (see also \cite{Looijenga2009}).}
We have a $24$-dimensional space of choices for the coefficients $\dm$. 
The algebraic torus $\G_m^5 \cong \G_m^6/\G_m$ acts on $\mathbb{P}^5$, but only the four-dimensional subgroup $\{(\zeta_1,\ldots,\zeta_6) \in \G_m^6/\G_m: \zeta_1\zeta_2\zeta_3 = \zeta_4\zeta_5\zeta_6\}$ preserves the space of cubic fourfolds of the form \eqref{eqn:cub4}.
Thus we have a 20-dimensional space of cubic fourfolds, which is full-dimensional in the moduli space of cubic fourfolds.

\subsubsection{Running example: the $Z$-manifold}
\label{Subsubsec:Z-manifold_3}

We continue from Sections \ref{Subsubsec:Z-manifold} and \ref{Subsubsec:Z-manifold_2}, and describe the family of generalized Calabi--Yau varieties mirror to the $Z$-manifold.  The group $\Gamma$ is 
\begin{equation} \Gamma := \{(\zeta_1,\zeta_2,\zeta_3) \in (\Z/3)^3: \zeta_1\zeta_2\zeta_3 = 1\}/(\zeta,\zeta,\zeta) \cong \Z/3,\end{equation}
and acts on $\mathbb{P}^8$ by multiplying homogeneous coordinates $z_1,z_2,z_3$ by $\zeta_1$, $z_4,z_5,z_6$ by $\zeta_2$, and $z_7,z_8,z_9$ by $\zeta_3$. 
Thus
\begin{equation}
\label{eqn:Zmir}
 \Zmir_\dm = \left\{-z_1z_2z_3 - z_4z_5z_6 -z_7z_8z_9 + \sum_i \dm_i z_i^3 + \sum_{i \in I_1,j \in I_2,k \in I_3} \dm_{ijk} z_iz_jz_k = 0\right\}/\Gamma
\end{equation}
is a quotient of a cubic sevenfold by $\Z/3$.  The algebraic torus $\G_m^8 \cong \G_m^9/\G_m$ acts on $\mathbb{P}^8$, with the six-dimensional subgroup $\{(\zeta_1,\ldots,\zeta_9) \in \G_m^9/\G_m: \zeta_1\zeta_2\zeta_3 = \zeta_4\zeta_5\zeta_6 = \zeta_7\zeta_8\zeta_9\}$ preserves the space of equations of the form \eqref{eqn:Zmir}, yielding a 30-dimensional space of sevenfolds.

\subsection{Statement of homological mirror symmetry}
\label{subsec:hmsnomap}

On the $B$-side of mirror symmetry we consider the category of  $\Gamma$-equivariant graded matrix factorizations of $W_\dm$, which we denote $\grmf_\Gamma(S_\Lambda,W_\dm)$ (the precise definition is reviewed in \S \ref{subsec:grmf}). 
It is a $\Lambda$-linear $\Z$-graded cohomologically unital $A_\infty$ (in fact, DG) category. 

On the $A$-side of mirror symmetry we consider the Fukaya $A_\infty$ category $\fuk(X,\omega_\lambda)$. 
More precisely, we recall that the Fukaya category may be curved, i.e., it may have non-vanishing $\mu^0$ and therefore not be an honest $A_\infty$ category. 
Therefore we consider the version whose objects are bounding cochains on objects of the Fukaya category, which we denote by $\fuk(X,\omega_\lambda)^\bc$. 
It is another $\Lambda$-linear $\Z$-graded cohomologically unital $A_\infty$ category.

One part of Kontsevich's homological mirror symmetry conjecture for generalized Greene--Plesser mirrors then reads:

\begin{conjmain}
\label{conj:hgpms}
There is a quasi-equivalence of $\Lambda$-linear $\Z$-graded $A_\infty$ categories
\begin{equation} D^\pi \fuk(X,\omega_\lambda)^\bc \simeq \grmf_\Gamma(S_\Lambda,W_\dm),\end{equation}
for some $\dm = \dm(\lambda) \in \mathbb{A}^{\Xi_0}$ with $\val(\dm_{\vec{p}}) = \lambda_{\vec{p}}$.
\end{conjmain}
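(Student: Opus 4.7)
The plan is to follow the versality-based strategy for Calabi--Yau hypersurface HMS developed by Seidel and extended by Sheridan (and by the present authors in the $K3$ setting): construct a generator on each side, show by direct computation that the resulting $A_\infty$ endomorphism algebras agree in a tropical/formal limit, and then propagate this agreement over the whole relevant deformation space via a Hochschild-cohomology versality argument. The monomial--divisor mirror map indexed by $\Xi_0$ provides the natural dictionary between the Kähler parameters $\lambda$ on the $A$-side and the complex parameters $\dm$ on the $B$-side, and the valuation condition $\val(\dm_{\vec{p}}) = \lambda_{\vec{p}}$ emerges as the first-order matching of deformations.

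Concretely, on the $A$-side I would assemble an immersed Lagrangian sphere $L \subset X$ by lifting Sheridan's Fermat-type immersed spheres in each factor $\til{X}'_j \subset \til{Y}'_j = \PP^{|I_j|-1}$ through the branched cover and proper-transform construction of \S\ref{subsec:Aintro}. Its self-Floer cohomology at leading order in the Novikov parameter is controlled by holomorphic polygons in the toric ambient space bounded by $L$, each weighted by the symplectic area of a divisor $D_{\vec{p}}$. A careful combinatorial count, together with tracking of the $\Gamma$-equivariance induced by the covering group, should identify this leading-order algebra with a Clifford-type (exterior) algebra. On the $B$-side I take a $\Gamma$-equivariant Koszul matrix factorization of $W_\dm$ supported at the origin of $\spec S_\Lambda$; by a standard Koszul-duality computation (Dyckerhoff-type) its DG endomorphism algebra at $\dm = 0$ is formal and isomorphic to the same exterior algebra, with $\Gamma$-grading matching that on the $A$-side via the identification $G \cong \til{M}/M$ and its dual $G^*$.

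Once the leading-order $A_\infty$ algebras are identified, the heart of the argument is versality: the $A_\infty$ deformations of the graded algebra $\End(L)$, respectively of $\End(L^\vee)$, are classified by classes in $HH^2$, and by explicit computation the relevant part of $HH^2$ is spanned by classes indexed by $\vec{p} \in \Xi_0$. The first-order deformation of the $A$-side algebra in the direction of $\vec{p}$ is the Kähler class contribution $\lambda_{\vec p} [D_{\vec p}]$, while the first-order deformation of $\End(L^\vee)$ is the monomial $\dm_{\vec p} z^{\vec p}$ in $W_\dm$. Matching these via the monomial--divisor correspondence and inverting versality produces a quasi-isomorphism of curved $A_\infty$ algebras for $\dm = \dm(\lambda)$ with the required valuation. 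Finally, one upgrades this to a quasi-equivalence of categories by invoking split-generation: on the $B$-side the Koszul factorization classically split-generates $\grmf_\Gamma(S_\Lambda, W_\dm)$, while on the $A$-side one applies the Abouzaid/Ganatra open-closed split-generation criterion, together with the automatic-generation principle available for smooth proper Calabi--Yau $A_\infty$ categories, to conclude that $L$ (with its space of bounding cochains) split-generates $D^\pi \fuk(X,\omega_\lambda)^\bc$.

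The main obstacle I anticipate is the versality step, for two intertwined reasons. First, one must show that the first-order Hochschild-cohomology computation is not just suggestive but genuinely exhaustive, i.e., that $HH^2$ in the appropriate degree and $\Gamma$-weight is precisely spanned by the deformation classes indexed by $\Xi_0$, so that no hidden parameters obstruct the match; when the embeddedness condition or the MPCS condition fails in subtle ways one expects extra complications from the orbifold structure of $Y_\lambda$. Second, in the cases $r > 1$ without the nef-partition condition, the $B$-side really belongs to the generalized Calabi--Yau setting where the matrix factorization category should be compared with a Calabi--Yau subcategory of $D^b(\Zmir_\dm)$, so one must verify that the generator $L^\vee$ indeed sits inside that subcategory and that the versality classes only deform $W_\dm$ within the allowed monomial stratum. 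A secondary difficulty is controlling the disk counts defining $\mu^0$ on $L$ precisely enough to identify the bounding cochain that realises $\dm(\lambda)$, rather than only its leading-order valuation; this typically requires either a tropical/SYZ-type degeneration argument or an independent computation via closed-open maps from the quantum cohomology of $Y_\lambda$.
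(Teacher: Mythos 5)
Your overall strategy---generate each side from a (lifted) immersed Lagrangian sphere and a Koszul matrix factorization, match the exterior-algebra cohomology algebras and first-order deformation classes, invoke Hochschild-cohomology versality to extend to all orders and extract the mirror map $\dm(\lambda)$, then split-generate with Dyckerhoff's criterion on the $B$-side and Abouzaid's criterion on the $A$-side---is indeed the route the paper follows. But you correctly flag versality as ``the main obstacle'' and then stop short of the idea that actually closes it, and that idea is the real content of the argument. One must kill the contribution to $\HH^2$ of the leading-order category coming from $H^2(X\setminus D)$: in Seidel's and Sheridan's earlier work this was done using the symmetric group permuting homogeneous coordinates, a symmetry which does \emph{not} exist for general generalized Greene--Plesser data. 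The paper replaces it by the anti-holomorphic involution on $X$ (the defining equations are real), giving a signed group $(\bar{G},\sigma)$ acting on the relative Fukaya category; the sign computation of Lemma~\ref{lem:signacthh} then shows (Lemma~\ref{lem:verscrit}) that the only invariant classes of the right degree and $\G$-weight are multiples of the known deformation classes $r_{\vec{p}}z^{\vec{p}}$. Without an input of this type the first-order matching remains, as you say, merely ``suggestive,'' and the proof does not close.

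Two further places where your sketch does not match what is actually done. First, you call the $B$-side split-generation ``classical,'' but Dyckerhoff's theorem requires $W_{\dm(\lambda)}$ to have an isolated singularity, and since $\dm(\lambda) = \Psi^{-1}(a(\lambda))$ is only determined up to the versality change-of-variables $\Psi$, one cannot simply appeal to genericity: the paper proves an isolated-singularity statement (Proposition~\ref{prop:isolsing}) that needs only the \emph{valuations} $\val(\dm_{\vec{p}})=\lambda_{\vec{p}}$, via a tropical argument that the regular subdivision of $\ol{\Delta}$ induced by $\lambda$ is a triangulation under the MPCP hypothesis. Second, your anticipated ``secondary difficulty''---pinning down disk counts and the bounding cochain on $L$ to identify $\dm(\lambda)$ precisely---is deliberately not incurred here: the mirror map is defined abstractly as $\Psi^{-1}(a(\lambda))$, and every downstream argument is engineered to depend only on the valuation of $\dm(\lambda)$, never its precise coefficients. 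So no SYZ or tropical disk-count enumeration enters the proof of Conjecture~\ref{conj:hgpms}.
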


In order to relate the category of graded matrix factorizations with a more manifestly geometric category, we recall the following theorems. 
The first is proved by Favero and Kelly \cite{Favero2014b}, and employs a theorem which is due independently to Isik and Shipman \cite{Isik2013,Shipman2012} (extending a theorem of Orlov which applies in the case $r=1$ \cite{Orlov2009}):

\begin{thm}[Favero--Kelly, Isik, Shipman, Orlov]
If the nef-partition condition holds, then we have a quasi-equivalence 
\begin{equation}\grmf_\Gamma(S_\Lambda,W_\dm) \simeq D^bCoh(\Xmir_\dm),\end{equation}
where the right-hand side denotes a DG enhancement of the stacky derived category of $\Xmir_\dm$ (which is unique by \cite{Lunts2010,Canonaco2015}). 
\end{thm}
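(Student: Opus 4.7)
The plan is to apply the stacky Landau--Ginzburg/complete-intersection correspondence of Isik--Shipman \cite{Isik2013,Shipman2012} (extending Orlov's $r=1$ case \cite{Orlov2009}) in the form worked out by Favero--Kelly \cite{Favero2014b}: if $\check{Y}$ is a smooth DM stack, $L_1,\ldots, L_r$ are line bundles on it, and $s=(s_1,\ldots,s_r) \in \bigoplus_j H^0(\check{Y},L_j)$ is a regular section cutting out a smooth substack $Z$, then on the total space of $L_1 \oplus \cdots \oplus L_r$ the Landau--Ginzburg model with potential $\sum_j y_j s_j$ (where $y_j$ is the $j$-th fiber coordinate) and fiber-scaling $\mathbb{G}_m$-equivariance has graded matrix factorization category quasi-equivalent to a DG enhancement of $D^bCoh(Z)$.

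The main task is then to massage the given LG model $([\mathbb{A}^I/G^*], W_\dm)$ into this form with $Z = \Xmir_\dm$. Under the nef-partition condition, the dual vectors $\iota(\vec{e}_{I_j}) \in \ol{N}$ split the extension $\mathbb{Z}^r \hookrightarrow \ol{M} \twoheadrightarrow M$. Setting $y_j := z^{\vec{e}_{I_j}}$, I would identify the $G^*$-quotient $[\mathbb{A}^I/G^*]$ with the total space of $\mathcal{O}_{\check{Y}}(\check{D}_1) \oplus \cdots \oplus \mathcal{O}_{\check{Y}}(\check{D}_r)$ over the toric stack $\check{Y}$, with the $y_j$ as fiber coordinates; this is essentially a repackaging of the Cox presentation of $\check{Y}$ using the nef-partition splitting. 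Moreover, since $\langle \iota(\vec{e}_{I_j}), \vec{p}\rangle \in \mathbb{Z}_{\ge 0}$ summing to $1$, each $\vec{p} \in \Xi_0$ lies in a unique $\ol{\Delta}_j$, and the splitting yields $z^{\vec{p}} = y_j \cdot z^{\pi(\vec{p})}$. Substituting this into $W_\dm$ gives the Cayley-type identity
\begin{equation} W_\dm = \sum_{j=1}^r y_j \cdot W^j_\dm, \end{equation}
which is precisely the tautological potential attached to the section $s_\dm = W^1_\dm \oplus \cdots \oplus W^r_\dm$ cutting out $\Xmir_\dm \subset \check{Y}$.

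With these identifications in hand, Favero--Kelly's theorem produces the desired quasi-equivalence $\grmf_\Gamma(S_\Lambda, W_\dm) \simeq D^bCoh(\Xmir_\dm)$. The hard part is matching grading and gauge data: the $\Gamma$-equivariance of \S\ref{subsec:hmsnomap}, built from the finite group $G^*/(\mathbb{Z}/d)$ together with the weighted-homogeneous grading $|z_i| = q_i$, $|W_\dm|=d$, must correspond under the Cayley identification to the scalar $\mathbb{G}_m$-action on the fibers used in Favero--Kelly, modulo the gerbe structure on $\check{Y}$ inherited from its toric stacky presentation. This compatibility, which rests on the Gorenstein index-$r$ property of $\sigma$ and on the $G^*$-invariance of $W_\dm$, is precisely the additional bookkeeping handled in \cite{Favero2014b} beyond the original Isik--Shipman argument.
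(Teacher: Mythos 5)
The paper does not prove this theorem: it is quoted verbatim from the literature, with the entire argument delegated to Favero--Kelly \cite{Favero2014b} (building on Isik \cite{Isik2013}, Shipman \cite{Shipman2012}, and Orlov \cite{Orlov2009}). So there is no in-paper proof to compare yours against. That said, your sketch is an accurate summary of the strategy underlying those references: the Cayley-trick decomposition $W_\dm = \sum_j y_j W^j_\dm$ with $y_j = z^{\vec{e}_{I_j}}$, the identification of $[\mathbb{A}^I/G^*]$ (with its extra $\mathbb{G}_m$-grading) as the relative spectrum of $\bigoplus_j \mathcal{O}_{\check{Y}}(-\check{D}_j)$ over the toric Deligne--Mumford stack $\check{Y}$, and the stacky Isik--Shipman equivalence between graded matrix factorizations of the tautological pairing potential and $D^bCoh$ of the zero locus of the section. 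You are right that the real content is the careful matching of the $\Gamma$-equivariance and the weight-$d$ grading with the Cox torus and the fiber-scaling $\mathbb{G}_m$-action, taking into account the gerbe structure on $\check{Y}$; that bookkeeping is precisely what \cite{Favero2014b} supplies, and neither you nor the paper reproduce it. Two small caveats a referee might press: one should check that $s_\dm$ is a regular section (so Isik--Shipman applies), and one should note that the ambient stack appearing here is $[\mathbb{A}^I/(G^* \times \mathbb{G}_m)]$ rather than $[\mathbb{A}^I/G^*]$ once the grading is incorporated; your phrasing "$\Gamma$-equivariance \ldots must correspond \ldots to the scalar $\mathbb{G}_m$-action" conflates these two pieces slightly, whereas the paper's grading datum $\G_\Delta$ (cf.\ \S\ref{subsec:grmf}) keeps them cleanly packaged together.
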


The second is due to Orlov \cite{Orlov2009}, and does not depend on the nef-partition condition:

\begin{thm}[Orlov]
We have a quasi-equivalence
\begin{equation}\grmf_\Gamma(S_\Lambda,W_\dm) \simeq \mathcal{A}_{\Zmir_\dm},\end{equation}
where $\mathcal{A}_{\Zmir_\dm}$ is a certain full subcategory of $D^bCoh(\Zmir_\dm)$ (which is in fact Calabi--Yau, see \cite{Kuznetsov2015}). 
\end{thm}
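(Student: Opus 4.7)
The strategy is to combine the Buchweitz equivalence between matrix factorizations and graded singularity categories with Orlov's semi-orthogonal decomposition theorem from \cite{Orlov2009}. The key numerical input is that since $\sum_{i \in I_j} q_i = d \sum_{i \in I_j} 1/d_i = d$ for each $j$, the hypersurface ring $A := S_\Lambda/(W_\dm)$ is graded Gorenstein with parameter
\begin{equation}
a \;=\; \sum_{i \in I} q_i - d \;=\; d(r-1) \;\geq\; 0,
\end{equation}
and the stacky $\mathrm{Proj}$ of $A$, further quotiented by $\Gamma$, is precisely $\Zmir_\dm$.

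Via Buchweitz, $\grmf_\Gamma(S_\Lambda,W_\dm)$ is identified with a suitably equivariant version of the graded singularity category of $A$. Orlov's theorem then provides, in the case $a>0$, a $\Gamma$-equivariant semi-orthogonal decomposition
\begin{equation}
D^bCoh(\Zmir_\dm) \;=\; \langle \mathcal{O}(-a+1),\ldots,\mathcal{O}(0),\; \mathcal{A}_{\Zmir_\dm} \rangle,
\end{equation}
where the rightmost piece is the image of the singularity category, and an outright quasi-equivalence with $D^bCoh(\Zmir_\dm)$ when $a=0$. In either case $\mathcal{A}_{\Zmir_\dm}$ is the desired full subcategory.

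The construction of the embedding functor itself is Orlov's: it sends a graded matrix factorization to the sheafification of the cokernel of the corresponding map of graded modules, and full faithfulness is established via a Koszul resolution of the structure sheaf of the affine cone combined with the weighted Beilinson-type resolution on $\WP(\vec{q})$. The principal task is then to verify that all of Orlov's constructions carry through equivariantly under the residual $\Gamma$-action. This is essentially automatic once one checks that $\Gamma$ acts torically on $\WP(\vec{q})$ preserving both $W_\dm$ and each line bundle $\mathcal{O}(j)$ appearing in the Orlov exceptional collection; no base-change issues arise from working over $\Lambda$ since Orlov's argument is purely algebraic. The main subtlety to watch for is bookkeeping the compatibility between the grading subgroup $\Z/d \subset G^*$ (which encodes the weighted-projective structure) and the residual $\Gamma$-action, but these two pieces fit together by construction of $\Gamma = G^*/(\Z/d)$.
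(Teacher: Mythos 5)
The paper states this as a known theorem of Orlov and cites \cite{Orlov2009} without giving any argument of its own. Your sketch correctly reconstructs the relevant ingredients: the Gorenstein parameter $a=\sum_{i\in I}q_i-d=d(r-1)\ge 0$ (using $\sum_{i\in I_j}q_i=d$ for each $j$, and $\deg W_\dm=d$), Buchweitz's identification of graded matrix factorizations with the graded singularity category, Orlov's semi-orthogonal decomposition in the regime $a\ge 0$, and the observation that the residual $\Gamma$-equivariance needed to pass from the hypersurface in $\WP(\vec{q})$ to the quotient stack $\Zmir_\dm$ is the only point not literally in \cite{Orlov2009} --- which is exactly what the paper packages via the grading datum $\G_\Delta$ refining $\G_{MF(d)}$ in its definition of $\grmf_\Gamma$.
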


Thus we see that Conjecture \ref{conj:hgpms} implies:

\begin{cormain}
If the nef-partition condition holds (recall that this is true, in particular, in the Greene--Plesser case $r=1$), then there is a quasi-equivalence
\begin{equation} D^\pi \fuk(X,\omega_\lambda)^\bc \simeq D^bCoh(\Xmir_\dm).\end{equation}

Even if the nef-partition condition does not hold, there is a quasi-equivalence 
\begin{equation}D^\pi \fuk(X,\omega_\lambda)^\bc \simeq \mathcal{A}_{\Zmir_\dm}.\end{equation}
\end{cormain}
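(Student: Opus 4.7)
The plan is to obtain both quasi-equivalences of the corollary by direct composition, treating the result as a formal consequence of Conjecture \ref{conj:hgpms} together with the two theorems of Orlov, Isik--Shipman, and Favero--Kelly stated just above it. No new geometric or categorical input is required.

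First, I would invoke Conjecture \ref{conj:hgpms}, which supplies a quasi-equivalence of $\Lambda$-linear $\Z$-graded $A_\infty$ categories
\begin{equation} D^\pi \fuk(X,\omega_\lambda)^\bc \simeq \grmf_\Gamma(S_\Lambda,W_\dm),\end{equation}
for some $\dm = \dm(\lambda) \in \mathbb{A}^{\Xi_0}$ with $\val(\dm_{\vec{p}}) = \lambda_{\vec{p}}$. Post-composing with Orlov's quasi-equivalence $\grmf_\Gamma(S_\Lambda,W_\dm) \simeq \mathcal{A}_{\Zmir_\dm}$ (which does not require the nef-partition condition) immediately yields the second displayed statement of the corollary. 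If the nef-partition condition additionally holds, then post-composing instead with the Favero--Kelly/Isik--Shipman quasi-equivalence $\grmf_\Gamma(S_\Lambda,W_\dm) \simeq D^bCoh(\Xmir_\dm)$ produces the first displayed statement. Since quasi-equivalences of $A_\infty$ categories compose, both conclusions follow.

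The only bookkeeping point is to note that the two cited theorems are most naturally phrased in the DG (or triangulated) world, whereas the Fukaya-side input is an $A_\infty$ category. This is harmless: a DG category is in particular an $A_\infty$ category, and the quasi-equivalences of the cited theorems persist as $A_\infty$ quasi-equivalences of the associated $\Lambda$-linear $\Z$-graded cohomologically unital $A_\infty$ categories, and remain so after passing to the split-closed derived category $D^\pi(-)$. I expect this to be the only (very mild) obstacle: there is no substantive content beyond checking that the grading, $\Lambda$-linearity, and cohomological unitality assumed in Conjecture \ref{conj:hgpms} are compatible with those appearing in the statements of the two theorems, which is straightforward from the constructions of $\grmf_\Gamma(S_\Lambda,W_\dm)$ reviewed in \S \ref{subsec:grmf}.
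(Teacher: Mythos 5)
Your proposal is correct and matches the paper's own (implicit) proof exactly: the corollary is stated immediately after the two theorems with the phrase ``Thus we see that Conjecture \ref{conj:hgpms} implies,'' so its proof is precisely the composition of the quasi-equivalence from Conjecture \ref{conj:hgpms} with, respectively, the Favero--Kelly/Isik--Shipman/Orlov quasi-equivalence $\grmf_\Gamma(S_\Lambda,W_\dm) \simeq D^bCoh(\Xmir_\dm)$ (when the nef-partition condition holds) and Orlov's quasi-equivalence $\grmf_\Gamma(S_\Lambda,W_\dm) \simeq \mathcal{A}_{\Zmir_\dm}$. Your ``bookkeeping'' remark about DG versus $A_\infty$ is harmless and accurate, and the composition argument needs no further justification.
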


\subsection{Main results}
\label{subsec:mainthm}

In order for Conjecture \ref{conj:hgpms} to make sense, one needs a definition of the Fukaya category $\fuk(X,\omega_\lambda)$. 
Unfortunately a general definition is not available at the time of writing, although it is expected that one will be given in the work-in-preparation \cite{Abouzaid2012}, following \cite{fooo}. 
However, the Fukaya category of a compact Calabi--Yau symplectic manifold of complex dimension $\le 2$ has been defined using classical pseudoholomorphic curve theory in \cite{Seidel2003}. 
Using this definition of the Fukaya category, we prove:

\begin{main}
\label{main:dim2}
Conjecture \ref{conj:hgpms} holds when $\dim_\C(X) \le 2$ and the embeddedness and MPCS conditions hold.

If furthermore the `no $\bc$ condition' below holds, then Conjecture \ref{conj:hgpms} holds even if we remove the `$\bc$' from the Fukaya category.
\end{main}

It is not possible at present to give a complete proof of Conjecture \ref{conj:hgpms} when $\dim_\C(X) \ge 3$, because we don't have a construction of the Fukaya category in that case. 
Nevertheless we have:

\begin{main}
\label{main:higherdim}
If we assume that the MPCS condition holds, and that the Fukaya category $\fuk(X,\omega_\lambda)^\bc$ has the properties stated in \S \ref{subsec:ass}, then Conjecture \ref{conj:hgpms} holds. 

If furthermore the `no $\bc$ condition' below holds, then Conjecture \ref{conj:hgpms} holds even if we remove the `$\bc$' from the Fukaya category.
\end{main}

\begin{defn}
\label{defn:nobc}
We say the \emph{no $\bc$ condition} holds if there does not exist any $K \subset I$ such that $\vec{e}_K \in \ol{M}$ and 
\begin{equation} |K| - 1 = 2\sum_{i \in K} \frac{1}{d_i}.\end{equation}
This is the case, in particular, for all Greene--Plesser mirrors with $\dim_\C(X) \ge 2$.
\end{defn}

\begin{rmk}
In the case that $X$ is a Calabi--Yau hypersurface in projective space, Theorems \ref{main:dim2} and \ref{main:higherdim} were proved in \cite{Seidel2003} and \cite{Sheridan2015} respectively. 
\end{rmk}

\begin{rmk}
If the embeddedness condition does not hold, then one must work with a version of the Fukaya category that includes certain specific immersed Lagrangians (see Lemma \ref{lem:lagemb}). 
It may well be the case that it is easier to include these specific immersed Lagrangians as objects of the Fukaya category, than to include general immersed Lagrangians (compare \cite{Akaho2010}).
\end{rmk}

\begin{rmk}
One might hope that the MPCS condition could be replaced by the weaker MPCP condition in Theorem \ref{main:higherdim}, and that the proof would go through with minimal changes. 
In that case $(X,\omega_\lambda)$ would be a symplectic orbifold, so the definition of the Fukaya category would need to be adjusted accordingly (compare \cite{Cho2014}). 
\end{rmk}

\begin{rmk}
The properties of the Fukaya category outlined in \S \ref{subsec:ass} should be thought of as axioms, analogous to the Kontsevich--Manin axioms for Gromov--Witten theory (without any claim to completeness however). 
They are structural, rather than being specific to the symplectic manifold $X$. 
Using these axioms, we reduce the problem of proving Conjecture \ref{conj:hgpms} to certain computations in the Fukaya category of an exact symplectic manifold.
Thus we have separated the proof of Conjecture \ref{conj:hgpms} into two parts: one foundational and general, about verifying that the axioms of \S \ref{subsec:ass} hold; and one computational and specific to $X$, taking place within a framework where foundational questions are unproblematic. 
The present work addresses the first (foundational) part in the setting of Theorem \ref{main:dim2}, but not in the setting of Theorem \ref{main:higherdim}; and it addresses the second (computational) part in full generality. 
\end{rmk}

\begin{rmk}
\label{rmk:cheap}
The properties of the Fukaya category outlined in \S \ref{subsec:ass} will be verified for a certain substitute for $\fuk(X,\omega_\lambda)^{\mathsf{bc}}$ in the works-in-preparation \cite{Perutz2015a,Ganatra2015a}. 
Namely, they will be verified for the relative Fukaya category specialised to the $\Lambda$-point corresponding to $\omega_\lambda$ (see \cite[\S 5.4]{Sheridan2017}). 
This will allow us to prove Theorem \ref{main:higherdim} for a specific version of the Fukaya category. 
However, this version of the Fukaya category is not so useful if one wants to study the symplectic topology of $X$. 
For example, it does not help one to study arbitrary Lagrangians in $X$: the only objects it admits are exact Lagrangians in the complement of a certain divisor $D \subset X$.
On the other hand, the results of \cite{Perutz2015a,Ganatra2015a} combined with the present work and \cite{Ganatra2015} are sufficient to compute rational Gromov--Witten invariants of $X$ via mirror symmetry, so the substitute is good for this purpose.
\end{rmk}

\begin{rmk}
We can refine Conjecture \ref{conj:hgpms} by giving a specific formula for the mirror map $\dm(\lambda)$ (formulae in the Greene--Plesser case $r=1$ can be found, for example, in \cite[\S 6.3.4]{coxkatz}).
We can also prove this refined version if we assume additional structural results about the cyclic open-closed map and its mirror, which are stated in \cite{Ganatra2015} and will be proved in \cite{Ganatra2015a} in the context referenced in Remark \ref{rmk:cheap}. 
Compare \cite[Appendix C]{Sheridan2017}.
\end{rmk}

\begin{rmk}\label{rmk:nonamb}
Conjecture \ref{conj:hgpms} is not the most general possible statement of homological mirror symmetry for generalized Greene--Plesser mirrors: for example, we only consider K\"ahler forms which are restricted from the ambient toric orbifold. 
It appears that non-ambient K\"ahler forms would correspond, in some cases, to complex deformations of the mirror which cannot be embedded in the toric variety $\check{Y}$, and in others, to noncommutative deformations. 
We do not know how to prove such a generalization of our result. 
See \cite[\S 6.2.3]{coxkatz} for a relevant discussion.
\end{rmk}

\begin{rmk}
There is a Fano version of the generalized Greene--Plesser mirror construction: the main difference is that one should assume $\sum_{i \in I_j} 1/d_i > 1$ for all $j$, rather than $\sum_{i \in I_j} 1/d_i = 1$. 
It should be straightforward to adapt the arguments of this paper to prove the Fano version. 
The technical aspects are easiest if one works with the monotone symplectic form, which means $\lambda = \vec{e}_I$. 
In that case the assumptions of \S \ref{subsec:ass} can be shown to hold in any dimension (see \cite{Sheridan2013}), so one does not need to impose caveats as in the statement of Theorem \ref{main:higherdim}.
The situation is simpler than the Calabi--Yau case because the mirror map is trivial: one may take $\dm_{\vec{p}} = q$ for all $\vec{p} \in \Xi_0$. 
However one slight difference arises in the Fano index 1 case: a constant term needs to be added to the superpotential $W_\dm$ (compare \cite{Sheridan2013}).
\end{rmk}

\subsection{Examples}
\label{subsec:eg}

We return to our three running examples, elaborating on the particular context for our main results in these cases, and adding some variations.  
Further interesting examples can be found in \cite{Schimmrigk1993,Schimmrigk1996}.

\subsubsection{The quartic}
 
As we have already mentioned, when $r=1$ our results amount to a proof of homological mirror symmetry for Greene--Plesser mirrors. 
There are 27 Greene--Plesser mirror pairs in dimension 2 \cite{Kreuzer1998}, and 800 in dimension 3 \cite{Kreuzer2000}.
We remark that our results imply both `directions' of homological mirror symmetry: we prove both $D^\pi\fuk(X) \simeq D^bCoh(\Xmir)$ and $D^\pi\fuk(\Xmir) \simeq D^bCoh(X)$, for each pair of Greene--Plesser mirrors.

One interesting case is when $\Xmir$ is a Calabi--Yau hypersurface in projective space. 
It has been proved in \cite{Seidel2003} and \cite{Sheridan2015} that $D^\pi\fuk(\Xmir) \simeq D^bCoh(X)$ for the appropriate mirror $X$, and we will not discuss this case further here. 
However our main result also applies to prove homological mirror symmetry in the other direction in these cases: i.e., we also prove that $D^\pi\fuk(X) \simeq D^bCoh(\Xmir)$, which is new.
Our first running example, from Section \ref{Subsubsec:mirror_quartic}-\ref{Subsubsec:mirror_quartic_3}, concerns the case when $\Xmir$ is a quartic hypersurface in projective $3$-space, and $X$ is the mirror quartic.

Because $r=1$ the embeddedness condition holds, so we can apply Theorem \ref{main:dim2}.
It says that there exists $\dm = \dm(\lambda)$ with $val(\dm_{\vec{p}}) = \lambda_{\vec{p}}$, such that
\begin{equation}
\label{eqn:mirrquarthms}
 D^\pi \fuk(X,\omega_\lambda) \simeq D^bCoh(\Xmir_\dm).
\end{equation}

\begin{rmk}
 Bayer and Bridgeland have computed the derived autoequivalence group of a $K3$ surface of Picard rank $1$ \cite{Bayer2013}, for example the very general quartic surface $\Xmir_\dm$. 
In \cite{Sheridana}, the authors combine Bayer--Bridgeland's result with \eqref{eqn:mirrquarthms} to derive consequences for the symplectic mapping class group of $(X,\omega_\lambda)$, for a generic K\"ahler class $[\omega_\lambda]$ (the genericity requirement on $[\omega_\lambda]$ ensures that the mirror has Picard rank $1$).  
\end{rmk}

\begin{rmk}
The Greene--Plesser construction provides one other example of mirror pairs $(X,\Xmir)$ of $K3$ surfaces such that the very general fibre $\Xmir_\dm$ of the mirror family has Picard rank $1$, and to which the results of \cite{Sheridana} can therefore be applied. 
Namely, $\Xmir$ is a sextic hypersurface in $\mathbb{P}(3,1,1,1)$ (known as a `double plane'), and $X$ is a quotient of a certain double plane by a group isomorphic to $\Z/2 \times \Z/6$ (known as a `mirror double plane'). 
This example corresponds to the toric data $r=1$, $I=I_1 = \{1,2,3,4\}$, $\vec{d} = 2 \vec{e}_1 + 6 \vec{e}_{\{2,3,4\}}$, $\ol{M}:= \{m \in \Z^4: 3m_1 + \sum_{i=2}^4 m_i \equiv 0\text{ (mod $6$)}\}$. 
In particular, it provides an example where the $d_i$ are not all equal, in contrast with our running examples. 
We refer to \cite{Sheridana} for more discussion of this example. 
\end{rmk}

\subsubsection{The cubic fourfold}
\label{subsubsec:cubfour}

It is recognized that there is an intimate relationship between cubic fourfolds and $K3$ surfaces: see \cite{Hassett2016} and references therein. 
In particular, Hassett \cite{Hassett2000} explained that certain cubic fourfolds have an `associated $K3$ surface' in a certain Hodge-theoretic sense. 
The moduli space $\mathcal{C}$ of cubic fourfolds is 20-dimensional, and Hassett showed that there exist certain irreducible divisors $\mathcal{C}_d \subset \mathcal{C}$ with an associated $K3$ surface that is polarized of degree $d$. 
It is conjectured (although not explicitly by Hassett) that a cubic fourfold is rational if and only if it has an associated $K3$ surface in this sense.

Relatedly, Kuznetsov \cite{Kuznetsov2010} explained that any cubic fourfold $\Zmir$ has an associated category $\mathcal{A}_{\Zmir}$, which is the semi-orthogonal complement of the full exceptional collection $\langle \mathcal{O},\mathcal{O}(1),\mathcal{O}(2) \rangle \subset D^bCoh(\Zmir)$. 
He observed that this category `looks like' the derived category of a $K3$ surface, and conjectured that the cubic fourfold is rational if and only if it is equivalent to the derived category of an actual $K3$ surface (in which case the category is called `geometric'). 
Addington and Thomas \cite{Addington2012} showed that this holds for the general member of one of Hassett's divisors $\mathcal{C}_d$, establishing compatibility of these two rationality conjectures.

Our second running example, from Section \ref{Subsubsec:cubic_fourfold}-\ref{Subsubsec:cubic_fourfold_3}, explains how Kuznetsov's category $\mathcal{A}_{\Zmir}$ fits into our results. Recall that on the symplectic side we consider a $K3$ crepant resolution $X$ of  the quotient of $E\times E$ by an antidiagonal $\Z/3$-action, where $E$ is the elliptic curve with an order 3 automorphism, whilst on the $B$-side we have a cubic fourfold $\Zmir_\dm$.

Recall that the toric data do not satisfy the embeddedness condition, 
so we need to admit certain immersed Lagrangian tori into our Fukaya category in order for Theorem \ref{main:higherdim} to apply. 
If we do that, we obtain the existence of $\dm = \dm(\lambda)$ with $\val(\dm_{\vec{p}}) = \lambda_{\vec{p}}$, such that there is a quasi-equivalence
\begin{equation}
\label{eqn:hmscub4}
 D^\pi \fuk(X,\omega_\lambda)^\bc \simeq \mathcal{A}_{\Zmir_\dm}.
 \end{equation}

\begin{rmk}
The category $\mathcal{A}_{\Zmir_\dm}$ is only `geometric' on some $19$-dimensional loci inside the $20$-dimensional moduli space of cubic fourfolds, by \cite{Addington2012} -- indeed, for the generic cubic fourfold it does not even contain any point-like objects (cf. \emph{op.cit.} Section 2.4). 
Thus it is striking that, on the $A$-side, the Fukaya category is `geometric' (in the sense of being the Fukaya category of an honest symplectic manifold) on the entire $20$-dimensional moduli space. 
The absence of point-like objects is mirrored by the absence of special Lagrangian tori $T \subset (X,\omega)$ for generic $\omega$: the homology class $[T]$ of such a special Lagrangian torus would have to be non-zero (because special), isotropic (because a torus), and lie in the transcendental lattice $T(X) \cong -A_2$ which however admits no non-zero isotropic vectors.  
In particular, there does not exist an SYZ fibration on $(X,\omega)$, so this version of homological mirror symmetry can not be proved using family Floer theory on $X$ (it might, however, be possible to prove it via family Floer theory on a larger space, compare \cite{Abouzaid2016}). 
\end{rmk}

\begin{rmk}
The construction generalizes to arbitrary values of $r$, by taking $I = \{1,2,3\} \sqcup \ldots \sqcup \{3r-2,3r-1,3r\}$ and $\ol{M} = \{\vec{m} \in \Z^{3r} : \sum m_i \equiv 0 \text{ (mod $3$)}\}$.  
The variety $\Zmir_\dm$ is a cubic $(3r-2)$-fold, and the mirror $X$ is a crepant resolution of the quotient $E^r/S$, where $S := \ker((\Z/3)^r \xrightarrow{+} \Z/3)$.
\end{rmk}

\begin{rmk}
The reverse direction of homological mirror symmetry in this case, which would relate a component of the Fukaya category of the cubic $(3r-2)$-fold $\Zmir$ to the derived category of the Calabi--Yau $r$-fold $E^r/S$, ought to follow from the results of \cite{Sheridan2013}.
\end{rmk}

\begin{rmk}
Alex Perry pointed out the following variation on this example to the authors. 
Consider the square elliptic curve $F =\C/\langle 1, i\rangle$, which is a four-fold branched cover of $\CP^1$ with 3 orbifold points of orders $4,4,2$. The quotient of $F\times F$ by the antidiagonal action of $\Z/4$ admits a crepant resolution $X$ which is a $K3$ surface of Picard rank 20. The mirror $\Zmir$, according to the Batyrev--Borisov construction, is a quartic hypersurface in $\CP^5(1,1,2,1,1,2)$ (although as explained to us by Perry, this example is expected to be related to Gushel--Mukai varieties \cite{Kuznetsov2018}). 
Precisely, this example corresponds to the toric data $r=2$, $I=I_1 \sqcup I_2 =\{1,2,3\}\sqcup\{4,5,6\}$, $\vec{d} = 4 \vec{e}_{\{1,2,4,5\}} + 2 \vec{e}_{\{3,6\}}$, $\ol{M}:= \{\vec{m} \in \Z^6: \sum_{i=1,2,4,5} m_i + \sum_{i=3,6} 2m_i \equiv 0\text{ (mod $4$)}\}$. 
We remark that the intersection of one of the toric divisors with $X$ has two connected components, with the result that the ambient K\"ahler forms only span a 19-dimensional subspace of the K\"ahler cone. 
We also remark that this is another example where the $d_i$ entering the toric data are not all equal. 
\end{rmk}

\subsubsection{The $Z$-manifold}

The $Z$-manifold is an example of a rigid Calabi--Yau threefold, i.e., one which has $h^{1,2} = 0$ and therefore admits no complex deformations. 
In particular, it cannot be mirror to another Calabi--Yau threefold: the mirror would necessarily have $h^{1,1} = 0$ and therefore could not be K\"ahler. 
It was first considered in the context of mirror symmetry in \cite{Candelas1985}; see \cite{Filippini2011} for a detailed study of its properties. 
It was explained in \cite{Candelas1993} that the generalized mirror ought to be the quotient of the cubic sevenfold by a $\Z/3$ action. Our final running example, from Sections \ref{Subsubsec:Z-manifold}-\ref{Subsubsec:Z-manifold_3}, verifies this on the level of homological mirror symmetry.

Recall that the $Z$-manifold $X$ is a crepant resolution of the quotient $E \times E \times E/\Gamma^*$, where $E$ is as in \S \ref{subsubsec:cubfour} and $\Gamma^* \cong \Z/3$ acts diagonally, whilst $\Zmir$ is a $\Z/3$-quotient of a cubic sevenfold. 

 Theorem \ref{main:higherdim} gives a quasi-equivalence 
\begin{equation}D^\pi\fuk(X,\omega_\lambda)^\bc \simeq \mathcal{A}_{\Zmir_\dm}.\end{equation} 
The embeddedness condition does not hold, so we must again include certain immersed Lagrangian tori in our definition of the Fukaya category to obtain this result.

\subsection{Outline}

To help the reader's navigation, in this section we give a rough outline of the proofs of Theorems \ref{main:dim2} and \ref{main:higherdim}. 
We caution the reader not to take the statements here too literally, as we're brushing over technical details in the interests of readability.

The proofs have much in common with Seidel's proof of homological mirror symmetry for the quartic surface \cite{Seidel2003}, and even more in common with the first-named author's proof of homological mirror symmetry for Calabi--Yau hypersurfaces in projective space in \cite{Sheridan2015}. 
In particular, we use Seidel's idea \cite{Seidel2002} to consider the Fukaya category of $X$ relative to the simple normal crossings divisor $D \subset X$ which is the intersection of $X$ with the toric boundary divisor of $Y_\lambda$. 
In \cite{Seidel2003,Sheridan2015}, the irreducible components of $D$ were all ample, which meant that the relative Fukaya category was defined over a formal power series ring. 
For arbitrary generalized Greene--Plesser mirrors however, $D$ will have non-ample components which are created by the crepant resolution. 
In this case the relative Fukaya category must be defined over a more complicated ring related to the K\"ahler cone, which we now describe following \cite{Sheridan2017}. 

Let $Amp(X,D) \subset H^2(X,X\backslash D;\R)$ be the cone of effective ample divisors supported on $X$. 
Let $\Nef \subset Amp(X,D)$ be an open convex subcone containing $\sum_{\vec {p} \in \Xi_0} \lambda_{\vec{p}} \cdot [D_\vec{p}]$, and let  $NE(\Nef) = \{ u \in H_2(X,X\backslash D) : u\cdot a \geq 0 \ \forall \, a\in \Nef\}$. 
The coefficient ring $R(\Nef)$ of the relative Fukaya category $\fuk(X,D,\Nef)$ is the completion of the ring $\C[NE(\Nef)]$ at the maximal ideal $\fmuncomp$ spanned by $r^u$ for $u \neq 0$. 

The relative Fukaya category $\fuk(X,D,\Nef)$ is a flat deformation of the affine Fukaya category $\fuk(X \setminus D)$ over $R(\Nef)$. 
It may be curved, so we introduce the uncurved $R(\Nef)$-linear $A_\infty$ category $\fuk(X,D,\Nef)^\bc$, whose objects are objects of $\fuk(X,D,\Nef)$ equipped with bounding cochains.  

Because $\sum_{\vec {p} \in \Xi_0} \lambda_{\vec{p}} [D_\vec{p}] \in \Nef$, there is a well-defined $\C$-algebra homomorphism
\begin{align*}
a(\lambda)^*: R(\Nef) & \to \Lambda,\\
a(\lambda)^*(r^u) & = q^{\sum_{\vec {p} \in \Xi_0} \lambda_{\vec{p}} [D_\vec{p}] \cdot u}.
\end{align*}
We define $\fuk(X,D,\Nef)^\bc_{a(\lambda)} := \fuk(X,D,\Nef)^\bc \otimes_{R(\Nef)} \Lambda$, where $\Lambda$ is regarded as an $R(\Nef)$-algebra via the homomorphism $a(\lambda)^*$ (geometrically, we are specializing the family of categories to the $\Lambda$-point $a(\lambda)$).
There is an embedding of $\Lambda$-linear $A_\infty$ categories
\begin{equation}
\label{eq:fukrelemb}
 \fuk(X,D,\Nef)_{a(\lambda)}^\bc \hookrightarrow \fuk(X,\omega_\lambda)^\bc.
 \end{equation}

We observe that $X \setminus D$ is a cover of a product of `generalized pairs of pants', and introduce the subcategory $\aup \subset \fuk(X,D,\Nef)$ whose objects are lifts of products of the immersed Lagrangian spheres constructed in \cite{Sheridan2011}. 
We expand the $A_{\infty}$ operations as 
\begin{equation}
\label{eq:fodef}
\mu^*_{\aup} = \mu^*_{0} + \sum_{0\neq u\in NE(\Nef)} r^u \mu^*_u,
\end{equation}
where $\mu^*_{0}$ is the $A_\infty$ structure on the corresponding subcategory $\aup_0 \subset \fuk(X \setminus D)$. 

We introduce a corresponding subcategory $\bup$ in a mirror category of matrix factorizations, expand the $A_\infty$ operations as in \eqref{eq:fodef}, and let $\bup_0$ be the order-zero category (it corresponds to perfect complexes on the central fibre of the mirror family). 
The first step in the proof is to prove that $\aup_0$ and $\bup_0$ are quasi-equivalent. 
This follows immediately from \cite{Sheridan2011}, together with the K\"unneth formula for Fukaya categories \cite{Amorim2017a}.

The next step in the proof is to identify the `first-order deformation classes' of the categories $\aup$ and $\bup$. 
The first-order deformation classes of $\aup$ live in $\HH^2(\aup_0,\aup_0 \otimes \fmuncomp)$, and are represented by the Hochschild cochains $r^{u_p} \mu^*_{u_p}$ where  $u_p \cdot D_q = \delta_{pq}$. 
They can be computed by combining the computation of the first-order deformation classes of the relative Fukaya category of the pair of pants from \cite{Sheridan2015} with structural results from \cite{Sheridan2017}, and matched with those of $\bup$.

At this point the versality criterion of \cite{Sheridan2017} takes over. 
We say that $\aup$ is a \emph{versal} deformation of $\aup_0$ if, for any other deformation $\aup'$ with the same first-order deformation classes, there is an automorphism $\psi: R(\Nef) \to R(\Nef)$ and a (potentially curved) $A_\infty$ isomorphism $\aup' \simeq \psi^*\aup$ (here the automorphism acts by applying $\psi^*$ to all coefficients $r^u$ in the expansion of $\mu^*_{\aup}$). 
A simple form of the versality criterion asserts that, if $\Nef$ is `nice' (a technical condition), $X$ is Calabi-Yau, and the first-order deformation classes span $\HH^2(\aup_0, \aup_0 \otimes \fmuncomp)$,  then $\aup$ is a versal deformation of $\aup_0$. 
The proof involves the construction of a suitable automorphism $\psi$ order-by-order in the $\fmuncomp$-adic filtration on $R(\Nef)$. 

Applying the versality criterion in the present context, we obtain a (potentially curved) $A_\infty$ isomorphism $\aup \simeq \psi^*\bup$ for some `mirror map' $\psi$. 
The curvature of $\bup$ vanishes by definition, so each object can be equipped with the zero bounding cochain. 
These can be transferred through the curved isomorphism to (potentially non-vanishing) bounding cochains on the objects of $\aup$, giving a quasi-embedding $\psi^* \bup \hookrightarrow \aup^\bc$. 

The next step is to specialize our categories to the $\Lambda$-point $a(\lambda)$. 
We obtain a quasi-embedding $\bup_{\psi(a(\lambda))} \hookrightarrow \aup_{a(\lambda)}^\bc $. 
We identify $\aup_{a(\lambda)}^\bc$ with a full subcategory of $\fuk(X,\omega_\lambda)^\bc$ via the embedding \eqref{eq:fukrelemb}, and $\bup_{\psi(a(\lambda))}$ with a full subcategory of $\grmf_\Gamma(S_\Lambda,W_\dm)$, where $\dm = \psi(a(\lambda))$. 

We now have a quasi-embedding of $\bup_{\psi(a(\lambda))}$ into both of the mirror categories we are trying to identify, so it remains to prove that the respective images split-generate. 
On the $B$-side this is a consequence of the fact that $W_\dm$ has an isolated singularity at the origin, by a result due independently to Dyckerhoff  \cite{Dyckerhoff2009} and Seidel \cite{Seidel2008a}. 
On the $A$-side it is then a consequence of the `automatic split-generation criterion' of \cite{Perutz2015}, or that of \cite{Ganatra2016} (we employ the latter). 

Let us now remark on two points where the above summary was inaccurate. 
Firstly, we actually work with the `ambient relative Fukaya category', which is roughly the restriction of the relative Fukaya category to the subspace of ambient K\"ahler forms. 
It is defined over a ring $R(\Nef_{amb})$ where $\Nef_{amb} \subset H^2(Y,Y\backslash D^Y)$ is an open convex cone in the cone of effective ample divisors supported on $D^Y$ (see Section \ref{sec:ambrel} for details). We are forced to work with the ambient Fukaya category because we do not know how to describe the mirrors to non-ambient K\"ahler forms, see Remark \ref{rmk:nonamb}.

Secondly, in our case the deformation classes do not span $\HH^2(\aup_0, \aup_0 \otimes \fmuncomp)$, but there is a finite symmetry group $\bar{G}$ such that $\HH^2(\aup_0, \aup_0 \otimes \fmuncomp)^{\bar{G}}$ is contained in the submodule spanned by the deformation classes. 
This suffices for our purposes, by an equivariant version of the versality result. 

There is a geometric motivation underlying the introduction of this group action. 
If one assumes that the closed--open map $\EuC\EuO:SH^*(X \setminus D) \to \HH^*(\fuk(X\setminus D))$ is an isomorphism, then one can show that the versality criterion is satisfied so long as $H^2(X \setminus D) = 0$; and the equivariant versality criterion is satisfied if $H^2(X \setminus D)^{\bar{G}} = 0$. 
In our case $H^2(X \setminus D) \neq 0$, which explains why we need to use the equivariant versality criterion. 
The same was true in \cite{Seidel2003,Sheridan2015}, where the group $\bar{G}$ was taken to be the symmetric group acting on the homogeneous coordinates of $X$ (more precisely, a cyclic subgroup was taken in \cite{Seidel2003}).  
That symmetry does not exist for all generalized Greene--Plesser mirrors, as the action of the symmetric group need not preserve the toric data in general. 
However, in \cite{Sheridan2017} it was explained that one could also use a real structure to rule out deformations in the direction of $H^2(X \setminus D)$. 
In this paper we verify that this does the job for the generalized Greene--Plesser mirrors. 

\paragraph{Acknowledgments:} We are very grateful to Daniel Halpern--Leistner, who patiently explained how Proposition \ref{prop:bsplitgens} should be proved. 
We apologize to him for being too technically ignorant to actually implement his suggestions: instead we used a suggestion of Matt Ballard to reduce it to a result that is proved in the literature, and we are equally grateful to him. 
N.S. would also like to thank Denis Auroux and Mohammed Abouzaid, who independently suggested that it should be possible to extend the techniques of \cite{Sheridan2015} to more complicated branched covers of projective space, which is what we do in this paper; Mark Gross, for bringing \cite{Candelas1993} to his attention; and the Instituto Superior T\'ecnico and ETH Z\"urich, for hospitality while working on this paper. We are grateful to the referee for  expository suggestions.

N.S. was partially supported by a Sloan Research Fellowship, and by the National Science Foundation through Grant number DMS-1310604 and under agreement number DMS-1128155.
Any opinions, findings and conclusions or recommendations expressed in this material are those of the authors and do not necessarily reflect the views of the National Science Foundation.
I.S. was partially supported by a Fellowship from EPSRC.

\section{The ambient relative Fukaya category}
\label{sec:ambrel}

In this section we recall the definition of the ambient relative Fukaya category given in \cite{Sheridan2017}, and explain what it looks like in the present setting.  
Recall that $Y_\lambda$ is a (possibly singular) toric variety, and we denote the toric boundary divisor by $D^Y \subset Y_\lambda$. 
We consider the complete intersection $X \subset Y_\lambda$, and equip it with the divisor $D:= X \cap D^Y$. 
Our assumptions ensure that $X$ is smooth and $D$ is a simple normal-crossings divisor, so $(X,D)$ is a $\snc$ pair in the sense of \cite{Sheridan2017}.
Even though $(Y_\lambda,D^Y)$ need not be a $\snc$ pair because $Y_\lambda$ need not be smooth, we are going to call $(X,D) \subset (Y_\lambda,D^Y)$ a sub-$\snc$ pair in accordance with \cite[\S 3.6]{Sheridan2017}, because all of the relevant arguments go through when $Y_\lambda$ has singularities so long as $X$ avoids them, which it does when the MPCS condition holds. 

\subsection{Grading datum}
\label{subsec:gradings}

We recall some terminology from \cite{Sheridan2017}. 
A \emph{grading datum} $\G$ is an abelian group $Y$ with homomorphisms $\Z \to Y \to \Z/2$ whose composition is non-zero.  
A $\G$-graded object (vector space, module, algebra, etc.) is a $Y$-graded object in the usual sense. 
One says that an element has degree $k \in \Z$ if its degree is the image of $k$ in $Y$, and any Koszul-type signs appearing in formulae are defined via the map $Y \to \Z/2$.
A \emph{splitting} of a grading datum is a homomorphism $Y \to \Z$ splitting the first map. 
A splitting can be used to produce a $\Z$-graded object from a $\G$-graded one.

The hypersurface $X \subset Y_\lambda$ is cut out by a section of a certain vector bundle $\cL$. 
We define $\mathbb{RP}(\wedge^{top}(TY_\lambda \oplus \cL^\vee))|_{Y_\lambda \setminus D^Y}$, the fibre bundle of real lines in the indicated line bundle over $Y_\lambda \setminus D^Y$. 
We define a grading datum $\G = \G_{amb}:= \{\Z \to H_1(\mathbb{RP}(\wedge^{top}(TY_\lambda \oplus \cL))|_{Y_\lambda \setminus D^Y}) \to \Z/2\}$, where the map from $\Z$ is induced by the inclusion of a fibre, and the map to $\Z/2$ corresponds to the first Stiefel--Whitney class of the tautological real line bundle.

In this case the line bundle $\wedge^{top}(TY_\lambda \oplus \cL)$ is trivial over $Y_\lambda$ (which is why $X$ ends up being Calabi--Yau). 
Restricting this trivialization to $Y_\lambda \setminus D^Y$ induces a splitting of the grading datum. 
This determines an isomorphism of $\G$ with the grading datum
\begin{equation}
\begin{array}{ccccc}
\Z & \to & \Z \oplus M & \to & \Z/2 \\
j & \mapsto & j \oplus 0 & & \\
&& j \oplus m & \mapsto & [j],
\end{array}\end{equation}
via the natural isomorphism $M \cong H_1(M \otimes \C^*) \cong H_1(Y_\lambda \setminus D^Y)$.

Note that there is also a morphism of grading data
\begin{align}
 \mathbf{q}: \G & \to \Z \\
\nonumber  j \oplus m & \mapsto j
\end{align}
induced by the trivialization.

\subsection{Relative K\"ahler form\label{Subsec:rel_Kahler}}

We briefly recall the notion of a signed group action on an $\snc$ pair from \cite[Definition 5.8]{Sheridan2017}. 
A \emph{signed group} is a group with a homomorphism to $\Z/2$, so that the group can be decomposed into `odd' and `even' elements. 
An \emph{action} of a signed group on an $\snc$ pair $(X,D)$ is an action of the group on $X$, preserving $D$ as a set, such that even elements act holomorphically and odd elements anti-holomorphically. 

We recall the covering group $G := \til{M}/M$ of the branched cover $Y_\lambda \to \til{Y}'$ from \S \ref{subsec:Aintro}. 
The covering group $G$ acts on $(Y_\lambda,D^Y)$, preserving the sub-$\snc$ pair $(X,D)$.
We also observe that $Y_\lambda$ has a real structure, as it is a toric variety and therefore defined over $\R$: so it admits an anti-holomorphic involution. 
This involution preserves $X$, because its defining equation is real. 
The covering group $G$, together with the anti-holomorphic involution, generate a signed group $(\bar{G},\sigma)$ which acts on $(Y_\lambda,D^Y)$ preserving the sub-$\snc$ pair $(X,D)$, as outlined in \cite[\S 5.4]{Sheridan2017}.

Recall that a relative K\"ahler form on the $\snc$ pair $(X,D)$ is a K\"ahler form $\omega$ on $X$ equipped with a K\"ahler potential $h$ on $X \setminus D$ having a prescribed form near $D$ \cite[Definition 3.2]{Sheridan2017}. 
We abuse notation by denoting a relative K\"ahler form by $\omega \equiv (\omega,h)$.
Recall that a relative K\"ahler form defines a cohomology class $[\omega] \in H^2(X,X \setminus D;\R)$, which is specified by the linking numbers $\ell_p$ corresponding to the irreducible components $D_p$ of $D$. 
Explicitly, $[\omega] = \sum_p \ell_p \cdot PD(D_p)$.

Suppose $D = \cup_{p \in P} D_p$ is the decomposition into irreducible components. 
Observe that $D^Y = \cup_{\vec{p} \in \Xi_0} D^Y_{\vec{p}}$.
There is a function $\iota: P \to \Xi_0$, defined so that $D_p$ is a connected component of $D^Y_{\iota(p)} \cap X$.

\begin{lem}
\label{lem:existkahl}
There exists a relative K\"ahler form $\omega_\lambda$ on $(X,D)$ with linking numbers $\ell_p = \lambda_{\iota(p)}$. 
It can be chosen to be $(\bar{G},\sigma)$-invariant.
\end{lem}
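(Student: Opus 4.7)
The plan is to construct $(\omega_\lambda, h)$ as the restriction to $X$ of a canonical toric Kähler form and potential on the ambient toric variety $Y_\lambda$. The Kähler form $\omega_\lambda$ itself was already described in \S\ref{subsec:Aintro}; what is needed is to supplement it with a Kähler potential $h$ on $X \setminus D$ whose singularities along $D$ have the precise logarithmic form demanded by \cite[Definition 3.2]{Sheridan2017}, and then to verify the invariance claim.

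First I would construct $h$ using standard toric geometry. The strictly convex piecewise-linear function $\psi_\lambda$ has a (smoothed) Legendre dual $\phi_\lambda$ on $\til{M}_\R$ whose pullback via the logarithm map $\log|\cdot|: Y_\lambda \setminus D^Y \to \til{M}_\R$ gives a Kähler potential $h^Y$ on the open torus orbit. The associated form $-dd^c h^Y$ extends across the smooth locus of $Y_\lambda$ to represent $c_1(\mathcal{O}(D^Y_\lambda))$, and near each toric divisor $D^Y_{\vec{p}}$ it satisfies
\begin{equation} h^Y = \lambda_{\vec{p}} \log |s_{\vec{p}}|^2 + (\text{smooth}),\end{equation}
where $s_{\vec{p}}$ is a local defining section, with analogous iterated-logarithmic expressions at intersections of toric strata. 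Set $h := h^Y|_{X \setminus D}$. The MPCS condition guarantees both that $X$ avoids the singular locus of $Y_\lambda$ and that $X$ meets each toric stratum transversally in a smooth product of Fermat-type hypersurfaces; hence the asymptotic behavior of $h$ near each $D_p = X \cap D^Y_{\iota(p)}$ inherits the required shape with coefficient $\ell_p = \lambda_{\iota(p)}$, and $(\omega_\lambda, h)$ is a relative Kähler form with the claimed linking numbers.

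For $(\bar{G},\sigma)$-invariance, observe that $h^Y$ is invariant under the full compact real torus of $Y_\lambda$ by construction (it depends only on $|z_i|$ through the logarithm map), hence in particular under the finite subgroup $G \subset \til{M}/M$ acting as deck transformations of $Y_\lambda \to \til{Y}'$. Moreover, the coefficients $\lambda_{\vec{p}}$ and the toric monomials cutting out the $D^Y_{\vec{p}}$ are real, so $h^Y$ is invariant under the standard anti-holomorphic involution on $Y_\lambda$ coming from complex conjugation of the toric coordinates, which descends to the generator of the $\Z/2$-part of $\bar{G}$. Restricting to $X \setminus D$ yields the desired invariance of $(\omega_\lambda, h)$. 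The main point requiring care is matching the precise asymptotic normalization of $h$ near the strata of $D$ to the definition in \cite{Sheridan2017}, particularly at intersections of several components, but this is controlled step-by-step by the ambient toric model and by the fact that $D$ is a product of Fermat-type hypersurfaces inside each toric stratum; I do not expect any genuine obstacle here.
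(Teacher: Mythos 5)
Your proposal is correct and follows essentially the same route as the paper: build a toric relative K\"ahler form on the ambient $(Y_\lambda, D^Y)$ with linking numbers $\lambda_{\vec p}$ using strict convexity of $\psi_\lambda$ (i.e., ampleness of $\sum_{\vec p}\lambda_{\vec p}D^Y_{\vec p}$), note it is $G$-invariant by toricity and anti-holomorphically invariant because the defining data are real, then restrict to $(X,D)$. The only difference is cosmetic: where the paper simply invokes \cite[Lemma 3.3]{Sheridan2017} for the realization of an ample divisor's linking numbers by a relative K\"ahler form (and \cite[\S 4]{Aspinwall1993b} for the orbifold case), you unpack the Legendre-dual construction of the potential explicitly, which is a reasonable expansion of the same argument.
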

\begin{proof}
First let us suppose that $Y_\lambda$ is smooth. 
Recall that the \emph{support function} of the divisor $\sum_{\vec{p}} \lambda_{\vec{p}} \cdot D^Y_{\vec{p}}$ on $Y_\lambda$ is the piecewise-linear function which is linear on each cone of $\Sigma_\lambda$ and equal to $-\lambda_{\vec{p}}$ at each $\vec{p} \in \Xi_0$. 
It is clear that this coincides with the function $\psi_\lambda$ already defined, and that $\psi_\lambda$ is strictly convex by our assumptions, so this divisor is ample by \cite[\S 3.4]{Fulton1993}. 
It follows that the $\lambda_{\vec{p}}$ can be realized as the linking numbers of a relative K\"ahler form on $(Y_\lambda,D^Y)$, by \cite[Lemma 3.3]{Sheridan2017}. 
This relative K\"ahler form can be chosen to be toric, and therefore invariant under the action of the subgroup $G$ of the algebraic torus. 
It can also simultaneously be chosen to be invariant under the anti-holomorphic involution.  
The restriction of the resulting relative K\"ahler form to $(X,D)$ then has the desired properties.

The generalization to the case when $Y_\lambda$ has orbifold singularities is addressed following \cite[\S 4]{Aspinwall1993b} (compare \cite[Proposition 3.3.1]{coxkatz}).
\end{proof}

\begin{rmk}
For each $\vec{p} \in \Xi_0$, $D^Y_{\vec{p}}  \cap X$ is smooth. 
The function $\iota$ is a bijection if and only if these divisors are furthermore connected for all $\vec{p}$. 
In this case we do not need the ambient relative Fukaya category, we can work with the ordinary relative Fukaya category.
In the Greene--Plesser case $r=1$, this happens if and only if the `correction term' in the formula for the Picard rank of $X$ (i.e., the final term in the equation appearing in \cite[Theorem 4.4.2]{Batyrev1993}) vanishes.
\end{rmk}

\subsection{Coefficient ring}
\label{subsec:rings}

We recall the definition of the coefficient ring of the ambient relative Fukaya category (see \cite[\S 3.7]{Sheridan2017} for further details). 

Recall that we can identify $H^2(X,X \setminus D;\R) \cong \R^P$ with the space of $\R$-divisors supported on $D$. 
The function $\iota: P \twoheadrightarrow \Xi_0$ determines a map $\iota_*:\Z^P \twoheadrightarrow \Z^{\Xi_0}$ (when $Y_\lambda$ is smooth we identify it as the map $H_2(X,X \setminus D) \to H_2(Y_\lambda,Y_\lambda \setminus D^Y)$ induced by the inclusion). 
Applying $\Hom(-,\R)$ gives a map $\iota^*: \R^{\Xi_0} \hookrightarrow \R^P$ (when $Y_\lambda$ is smooth we identify it as the restriction map $H^2(Y_\lambda,Y_\lambda \setminus D^Y;\R) \to H^2(X,X \setminus D;\R)$).
We let $Nef(X,D) \subset \R^P$ correspond to the cone of effective nef divisors supported on $D$, and we suppose that $\Nef \subset Nef(X,D)$ is a convex sub-cone. 
We denote the pre-image of $\Nef$ under $\iota^*$ by $\Nef_{amb} \subset \R^{\Xi_0}$. 
We will assume that $\Nef$ is amb-nice in the sense of \cite[Definition 3.39]{Sheridan2017}, rational polyhedral, contained in the ample cone, and that $\Nef_{amb}$ contains $\lambda$ in its interior. 
Such a cone exists by \cite[Lemmas 3.30 and 3.44]{Sheridan2017}, because $\lambda \in Amp(Y_\lambda,D^Y)$ by construction.
 
We denote the dual cone to $\Nef_{amb}$ by $\Nef_{amb}^\vee \subset \R^{\Xi_0}$, and $NE_{amb}(\Nef) := \Nef_{amb}^\vee \cap \Z^{\Xi_0}$. 
We observe that the interior of $\Nef_{amb}$ contains $\lambda$ and in particular is non-empty, so $\Nef_{amb}^\vee$ is strongly convex.
We define a $\C$-algebra
\begin{equation} \Runcomp_{amb}(\Nef) := \C\left[ NE_{amb}(\Nef) \right],\end{equation}
and equip it with a $\G$-grading by putting the generator $\nov_{\vec{p}}$ in degree $0 \oplus \vec{p}$, for all $\vec{p} \in \Xi_0$.
It has a unique toric maximal ideal $\fmuncomp \subset \Runcomp_{amb}(\Nef)$, and we define $R_{amb}(\Nef)$ to be the $\G$-graded completion of $\Runcomp_{amb}(\Nef)$ with respect to the $\fmuncomp$-adic filtration.
We will abbreviate $R:=R_{amb}(\Nef)$.

\subsection{Ambient relative Fukaya category}
\label{subsec:ambrel}

The ambient relative Fukaya category $\fuk_{amb}(X,D,\Nef)$ is a $\G$-graded $R$-linear $A_\infty$ category. 
Its objects are compact exact Lagrangian submanifolds $L \subset X \setminus D$, equipped with an anchoring, Pin structure and orientation.

\begin{rmk}
The various versions of the Fukaya category (absolute, relative, and ambient relative) can be defined without requiring the Lagrangian branes to be oriented. 
In particular, \cite{Sheridan2017} did not require the Lagrangian branes to be oriented: there is a forgetful functor from the version we introduce here to the version considered in \cite{Sheridan2017} which forgets the orientation of each object. 
We need to consider oriented Lagrangians here in order for the open--closed map $\EuO\EuC$ to be defined (see \S \ref{subsec:ass}), since that is required for us to prove split-generation of the Fukaya category in Proposition \ref{prop:asplitgens} using Abouzaid's criterion (\cite{Sheridan2017} did not consider the open--closed map).
\end{rmk}

The morphism spaces in the relative Fukaya category are free $R$-modules generated by intersection points, and its $A_\infty$ structure maps count pseudoholomorphic discs $u: \mathbb{D} \to X$ with boundary on the Lagrangian branes, with a weight $r^{\iota_*[u]} \in R$. 
In order to arrange that $[u] \in NE_{amb}(\Nef)$, we choose a system of divisors $E$ with $\Nef(E) = \Nef$. 
This is possible by \cite[Lemma 3.8]{Sheridan2017} because we assume $\Nef$ to be rational polyhedral and contained in the ample cone.
We then use perturbation data in our pseudoholomorphic curve equations that are adapted to $E$ in the sense of \cite[Definition 4.1]{Sheridan2017}. 
It follows that $[u] \in NE_{amb}(\Nef)$ by positivity of intersection (see \cite[Lemma 4.2]{Sheridan2017}).

\begin{rmk}
\label{rmk:depkahl}
The definition of the relative Fukaya category depends on a choice of relative K\"ahler form $\omega$ on $(X,D)$ (e.g., the objects are Lagrangian with respect to the chosen symplectic form, and exact with respect to the chosen primitive for it on $X \setminus D$). 
It should be independent of $\omega$ in some sense, which is why we do not include it in the notation. 
However we have not proved this independence. 
Nevertheless, a weak version of it is proved in \cite[\S 4.5]{Sheridan2017}: namely, that $\fuk(X \setminus D)$ and the first-order deformation classes of $\fuk_{amb}(X,D,\Nef)$ are independent of $\omega$. 
This weak version is all that we will use in this paper (see Remark \ref{rmk:arbkahl}), so we hope the reader will accept this notational imprecision in the name of readability.
\end{rmk}

We define a $\C$-algebra homomorphism
\begin{align} 
\label{eqn:alamb}
a(\lambda)^*: R &\to \Lambda \\
\nonumber a(\lambda)^*(r_{\vec{p}}) & := q^{\lambda_{\vec{p}}}.
\end{align} 
We regard it as a $\Lambda$-point $a(\lambda)$ of the scheme 
\begin{equation} \overline{\cM}_{amb\text{-}K\ddot{a}h}(X,D,\Nef) := \spec(R).\end{equation}

Following  \cite[Assumption 5.4]{Sheridan2017}, there should be an embedding
\begin{equation}
\label{eqn:relembabs}
\left(\mathbf{q}_* \fuk_{amb}(X,D,\Nef)^\bc\right)_{a(\lambda)} \hookrightarrow \fuk(X,\omega_\lambda)^\bc.
\end{equation}
Recall that the `$\mathbf{q}_*$'  means we turn the $\G$-graded category into a $\Z$-graded one via the morphism $\mathbf{q}:\G \to \Z$, and the subscript `$a(\lambda)$' means we take the fibre of the family of categories over the corresponding $\Lambda$-point. 
In other words, we turn the $R$-linear category into a $\Lambda$-linear one by tensoring with $\Lambda$ (regarded as an $R$-algebra via the homomorphism $a(\lambda)^*$).

\subsection{Assumptions about the Fukaya category}
\label{subsec:ass}

In this section we explain which properties of (the various versions of) the Fukaya category we will use, because the constructions of these categories and the proofs of their basic properties have not yet been carried out in full generality. 
We will discuss cases in which these assumptions have been proved.

We assume that the ambient relative Fukaya category is defined and satisfies \cite[Assumption 5.1]{Sheridan2017} (more precisely, the analogue of that assumption in the ambient case): namely, it is a $\G$-graded (possibly curved) deformation of $\fuk(X \setminus D)$ over $R$. 
We assume that its first-order deformation classes are as prescribed in \cite[Assumption 5.3]{Sheridan2017}.

We assume that the absolute Fukaya category $\fuk(X,\omega)^\bc$ is defined and satisfies \cite[Assumption 5.4]{Sheridan2017}: namely, there is an embedding of $\Lambda$-linear, $\Z$-graded $A_\infty$ categories as in \eqref{eqn:relembabs} (here and in what follows, we abbreviate $\omega = \omega_\lambda$).

We assume the existence of the \emph{open--closed map}, a map of $\Lambda$-vector spaces
\begin{equation} \EuO\EuC: \HH_\bullet(\fuk(X,\omega)^\bc) \to \QH^{\bullet+n}(X)\end{equation}
where $\QH^\bullet(X) := H^\bullet(X;\Lambda)$. 
We assume that the map $\HH_{n}(\fuk(X,\omega)^\bc) \to \Lambda$ given by $\int_X \EuO\EuC(-)$ defines a weak proper Calabi--Yau structure on $\fuk(X,\omega)^\bc$ (see e.g. \cite[Definition 6.3]{Ganatra2015}).

We assume the existence of the \emph{coproduct}, which is a morphism of $\EuF(X,\omega)^\bc$-bimodules
\begin{equation} \Delta: \fuk_\Delta \to \mathcal{Y}^l_K \otimes_\Lambda \mathcal{Y}^r_K[n]\end{equation}
from the diagonal bimodule $\fuk_\Delta$ to the tensor product of left- and right-Yoneda modules for any object $K$.

We assume the existence of the length-zero part of the \emph{closed--open map}, a unital graded $\Lambda$-algebra homomorphism
\begin{equation} \EuC\EuO^0: \QH^\bullet(X) \to \Hom^\bullet_{\EuF(X,\omega)^\bc}(K,K)\end{equation}
for any object $K$, where $\QH^\bullet(X)$ is equipped with the quantum cup product.

We assume that the \emph{Cardy relation} is satisfied, which means that the diagram
\begin{equation}
\xymatrix{\HH_{\bullet-n}(\fuk(X,\omega)^\bc) \ar[r]^-{\EuO\EuC} \ar[d]^-{\HH_\bullet(\Delta)} & \QH^\bullet(X) \ar[d]^-{\EuC\EuO^0} \\
\HH_\bullet(\mathcal{Y}^l_K \otimes_\Lambda \mathcal{Y}^r_K) \ar[r]^-{H^*(\mu)} & \Hom^\bullet_{\fuk(X,\omega)^\bc}(K,K)}
\end{equation}
commutes for any object $K$, up to the sign $(-1)^{n(n+1)/2}$ (see \cite{Abouzaid2010a} for notations).

We assume that the open--closed map \emph{respects pairings}, in the sense that
\begin{equation}\langle \alpha, \beta \rangle_{Muk} = (-1)^{n(n+1)/2}\int_X \EuO \EuC(\alpha) \cup \EuO\EuC(\beta)\end{equation}
for all $\alpha, \beta \in \HH_\bullet(\fuk(X,\omega)^\bc)$. 
Here $\langle -,-\rangle_{Muk}$ denotes the `Mukai pairing' on Hochschild homology, as defined by Shklyarov \cite{Shklyarov2012} (see also \cite{Costello2007}).

\begin{rmk}
\label{rmk:verifyass}
When $(X,\omega)$ is positively monotone (which is not true in our case), versions of the relative and absolute Fukaya categories $\fuk_{amb}(X,D,\Nef)$ and $\fuk(X,\omega)$ which satisfy the analogues of all of the above assumptions are constructed using classical pseudoholomorphic curve theory in \cite{Sheridan2013} (up to minor changes in conventions), with the exception of the proof that $\EuO\EuC$ respects pairings, which is proved in \cite[Theorem 31]{Ganatra2016}. 
We remark that it was also explained in \cite{Sheridan2013} how to incorporate homotopy units and (weak) bounding cochains supported on (direct sums of) Lagrangians, which introduced a subtlety regarding the unitality of $\EuC\EuO^0$.
\end{rmk}

\begin{rmk}
\label{rmk:strunobs}
When $X$ is Calabi--Yau, the constructions and proofs referenced in Remark \ref{rmk:verifyass} go through with minor alterations so long as one can upgrade each object $L$ of $\fuk_{amb}(X,D,\Nef)$ to a \emph{tautologically unobstructed object}, which is a pair $(L,J_L)$ where $L$ is a Lagrangian brane and $J_L$ an $\omega$-compatible almost-complex structure such that there are no non-constant $J_L$-holomorphic spheres intersecting $L$, or non-constant $J_L$-holomorphic discs with boundary on $L$, where $J_L$ should be adapted to the system of divisors $E$. 
The construction of the absolute and relative Fukaya categories whose objects are such pairs $(L,J_L)$ is discussed for example in \cite{Seidel2003,Seidel2011}. 
The incorporation of bounding cochains is straightforward, following \cite{Sheridan2013}. 
We remark that the subtlety regarding unitality of $\EuC\EuO^0$ referenced in Remark \ref{rmk:verifyass} does not arise in the context of the present paper, because we need only consider bounding cochains (rather than \emph{weak} bounding cochains), so we do not need homotopy units, which were the origin of the subtlety (see \cite[Remark 5.7]{Sheridan2013}).
\end{rmk}

\begin{rmk}
\label{rmk:dim2}
When $\dim_\C(X) \le 2$ the condition that $(L,J_L)$ should be tautologically unobstructed is generic in $J_L$, so any Lagrangian brane can be upgraded to a tautologically unobstructed object in this case. 
It follows by Remark \ref{rmk:strunobs} that all of the above assumptions hold in this case. 
When $\dim_\C(X) \ge 3$, there is no reason to expect that the Lagrangians we consider in this paper can be upgraded to tautologically unobstructed objects. 
In this case, virtual techniques may be required \cite{fooo,Abouzaid2012} to justify our assumptions, or recourse to the substitute mentioned in Remark \ref{rmk:cheap}.
\end{rmk}

\section{Computations in the Fukaya category}
\label{sec:fuk}

\subsection{Branched cover and the corresponding map of grading data}

Recall that the morphism of fans $\Sigma_\lambda \to \tilde{\Sigma}'$ determines a toric morphism $Y_\lambda \to \til{Y}'$ with covering group $G = \til{M}/M$, which induces a branched covering of sub-$\snc$ pairs 
\begin{equation}\label{eqn:phi}
 \phi: (X,D) \to (\til{X}',\til{D}')\end{equation}
in the sense of \cite[\S 4.9]{Sheridan2017}.
We will denote the toric boundary divisor of $\til{Y}'$ by $\til{D}^{\til{Y}'}$, and its intersection with $\til{X}'$ by $\til{D}'$.

\begin{lem}
\label{lem:phirelh2}
The branched cover $\phi:(X,D) \to (\til{X}',\til{D}')$ induces a homomorphism
\begin{align}
 \phi_*: H_2(X,X \setminus D) &\to H_2(\til{X}',\til{X}' \setminus \til{D}'). 
\end{align}
We have 
\begin{align}
H_2(X,X \setminus D) & \cong \Z^P,\\
H_2(\til{X}',\til{X}' \setminus \til{D}') & \cong \Z^I, \text{ and} \\ 
\phi_*(\vec{e}_p) & = \iota(p) \in \Xi_0 \subset \Z^I,
\end{align}
for any $p \in P$.
\end{lem}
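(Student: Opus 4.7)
The plan has three steps: identify the source and target as $\Z^P$ and $\Z^I$ via standard topology, compute the pushforward from the toric pullback formula, and verify the primitivity input it requires.

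For the identifications, $D = \bigcup_{p \in P} D_p$ is a simple normal crossings divisor on the smooth compact manifold $X$, so excision together with the Thom isomorphism for the normal bundle of each smooth component yields
\begin{equation}
H_2(X, X \setminus D) \;\cong\; \bigoplus_{p \in P} H_0(D_p) \;=\; \Z^P,
\end{equation}
with $\vec{e}_p$ represented by a small holomorphic disc transverse to $D_p$ at a smooth point. The same argument applies to $(\tilde{X}', \tilde{D}')$: each factor $\tilde{X}'_j = \{\sum_{i \in I_j} z_i = 0\} \subset \mathbb{P}^{|I_j|-1}$ meets every toric boundary component $\{z_i = 0\}$ in a linear subspace, hence a connected smooth divisor, so the components of $\tilde{D}'$ are indexed by $I$ and $H_2(\tilde{X}', \tilde{X}' \setminus \tilde{D}') \cong \Z^I$.

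To compute $\phi_*(\vec{e}_p)$ I extract its coordinates using the projection formula $\phi_*(\vec{e}_p) \cdot [\tilde{D}'_i] = \vec{e}_p \cdot \phi^*[\tilde{D}'_i]$. Writing $\vec{q} := \iota(p) \in \Xi_0$, the ``two zeros in each $I_j$'' condition ensures that the expansion $\pi(\vec{q}) = \sum_i q_i \pi(\vec{e}_i)$ lies in a single cone of $\tilde{\Sigma}'$, so the standard toric pullback formula assigns coefficient $q_i$ to $D^Y_{\vec{q}}$ in $\phi^*[\tilde{D}^{\tilde{Y}'}_i]$, provided that $\pi(\vec{q})$ is primitive in $M$. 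Restricting to $X$ and intersecting with $\vec{e}_p$, which meets only $D_p$ transversally among the boundary components, then gives the $i$-th coordinate as $q_i = \iota(p)_i$, as required.

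The main technical input I expect to need is the primitivity of $\pi(\vec{q})$ in $M$. To verify it, suppose $\pi(\vec{q}) = k \cdot \pi(\ol{u})$ with $\ol{u} \in \ol{M}$ and $k \ge 2$, so $\vec{q} = k \ol{u} + c\vec{e}_I$ for some integer $c$. Picking a coordinate $i_0$ with $q_{i_0} = 0$ (available by the $\Xi_0$ condition) forces $k \mid c$, say $c = kc'$; integrality of $\ol{u}$ then forces $k \mid q_i$ for all $i$, and $\ol{u} = \vec{q}/k - c'\vec{e}_I$. Computing $\langle \vec{q}, \ol{u}\rangle = d/k - c'dr$, the defining condition $d \mid \langle \vec{q}, \ol{u}\rangle$ of $\ol{M}$ collapses to $k \mid 1$, contradicting $k \ge 2$. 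With primitivity in hand, the rest is routine toric intersection theory.
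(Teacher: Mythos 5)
Your argument is correct in outline, and in fact the paper states Lemma~\ref{lem:phirelh2} without any proof at all --- it is treated as a routine consequence of the toric set-up, presumably along exactly the lines you write: excision and the Thom isomorphism for the two relative homology identifications, reduction of $\phi_*$ to the toric pullback formula for boundary divisors, and the observation that the one non-trivial input is the primitivity in $M$ of $\pi(\iota(p))$. That reduction and your intersection-count with $\vec{e}_p$ are both right.

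There is one slip in the primitivity verification, which happens to be harmless but should be fixed. You wrote $\iota(p) = k\ol{u} + c\,\vec{e}_I$ for some integer $c$; but the kernel of $\pi|_{\ol{M}}$ is the whole rank-$r$ sublattice $\langle \vec{e}_{I_1},\dots,\vec{e}_{I_r}\rangle$, and equals $\Z\vec{e}_I$ only in the Greene--Plesser case $r=1$. The correct statement is $\iota(p) = k\ol{u} + \sum_j c_j\,\vec{e}_{I_j}$. After this fix the rest of your argument goes through unchanged: for each $j$ the ``at least two zeros in $I_j$'' property of $\Xi_0$ supplies an $i_0 \in I_j$ with $\iota(p)_{i_0}=0$, forcing $k\mid c_j$, hence $\iota(p)/k \in \ol{M}$; and then the defining congruence $d\mid\langle\vec{q},\iota(p)/k\rangle$ (here $\vec{q}$ is the paper's weight vector with $q_i = d/d_i$, not your shorthand $\vec{q}=\iota(p)$ --- beware of this notational clash, which actually appears in the line where you compute $\langle\vec{q},\ol{u}\rangle$) together with $\langle\vec{q},\iota(p)\rangle = d$ gives $d\mid d/k$, which is impossible for $k\ge 2$.
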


\begin{lem}
\label{lem:phigrad}
The branched cover $\phi:(X,D) \to (\til{X}',\til{D}')$ induces a morphism of ambient grading data 
\begin{align}
\mathbf{p}:\G_{amb}(X \setminus D) & \to \G_{amb}(\til{X}' \setminus \til{D}')
\end{align}
in accordance with \cite[\S 4.9]{Sheridan2017}. 
We have 
\begin{align}
\G := \G_{amb}(X \setminus D) & \cong \Z \oplus M, \\
\tilde{\G} := \G_{amb}(\til{X}' \setminus \til{D}') & \cong \Z \oplus \Z^I/\langle (2(1-|I_j|), \vec{e}_{I_j})\rangle, \text{ and} \\
\mathbf{p}(k , \vec{m}) & = (k + 2\langle \vec{n}_\sigma - \vec{e}_I, \vec{m} \rangle, \vec{m}).
\end{align}
\end{lem}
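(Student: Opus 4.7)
The plan is to compute the two ambient grading data explicitly using the framework of Section 2.1, and then to unpack the construction of the morphism induced by a branched cover from \cite[\S 4.9]{Sheridan2017} in order to recover the stated formula. In both cases the key input is the complex line bundle $\wedge^{top}(TY\oplus\cL^\vee)$ and the nature of its trivializations on the complement of the toric boundary: for $\G$ this bundle is trivial on all of $Y_\lambda$ (by the Calabi--Yau condition), whereas for $\tilde{\G}$ the analogous bundle on $\til{Y}'$ has non-trivial Chern class $\sum_j(|I_j|-1)\pi_j^*H_j$, whose effect must be absorbed into the presentation.

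First I would compute $\G = \G_{amb}(X\setminus D)$ by fixing a Calabi--Yau trivialization of $\wedge^{top}(TY_\lambda\oplus\cL^\vee)$ on $Y_\lambda$; this makes the associated $\mathbb{RP}^1$-bundle over $Y_\lambda\setminus D^Y$ a trivial $S^1$-bundle, so its $H_1$ splits as $\Z\oplus H_1(Y_\lambda\setminus D^Y) = \Z\oplus M$, with the $\Z$-summand carrying the data of the sequence $\Z\to\G\to\Z/2$. Next I would compute $\tilde{\G} = \G_{amb}(\til{X}'\setminus\til{D}')$. The line bundle $K_{\til{Y}'}^{-1}\otimes\det\cL^{-1}$ is trivial on the open torus $\til{Y}'\setminus\til{D}^{\til{Y}'}$ but not on $\til{Y}'$. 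Using the presentation of $H_1$ coming from the quotient $(\C^*)^I/(\C^*)^r\cong\til{Y}'\setminus\til{D}^{\til{Y}'}$, I would lift loops to elements of $\Z^I$. The diagonal loop $\vec{e}_{I_j}$, null in $\til{M}$, bounds a disc in $\til{Y}'$ (the fibre disc of the $j$-th $\C^*$-quotient inside $\mathbb{P}^{|I_j|-1}$) on which the first Chern class integrates to $|I_j|-1$; combined with the fact that the $\mathbb{RP}^1$-bundle associated to a complex line bundle has doubled Chern class (corresponding to the double cover $U(1)\to U(1)/\{\pm 1\}\cong U(1)$), this produces a monodromy shift of $2(|I_j|-1)$ in the $\mathbb{RP}^1$-fibre direction, yielding the relation $(2(1-|I_j|),\vec{e}_{I_j})=0$ in $\tilde{\G}$ and hence the stated presentation.

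Finally, for the map $\mathbf{p}$: since $\phi$ is \'etale on $Y_\lambda\setminus D^Y\to\til{Y}'\setminus\til{D}^{\til{Y}'}$, one has $\wedge^{top}(TY_\lambda\oplus\cL^\vee)|_{Y_\lambda\setminus D^Y}\cong\phi^*\wedge^{top}(T\til{Y}'\oplus\cL^\vee)$ canonically, so the induced map on the base of the $\mathbb{RP}^1$-bundle is the lattice inclusion $M\hookrightarrow\til{M}$, giving the second component $\vec{m}\mapsto\vec{m}$. For the fibre direction, I would compare the Calabi--Yau trivialization on $Y_\lambda$ with the pulled-back toric trivialization from $\til{Y}'\setminus\til{D}^{\til{Y}'}$; their ratio is a monomial on the torus whose character I would compute using that the primitive generator of the $\vec{e}_i$-ray in $M$ is $d_i\vec{e}_i$. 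The Calabi--Yau trivialization of $K_{Y_\lambda}^{-1}\otimes\det\cL^{-1}$ contributes a character corresponding to $\vec{n}_\sigma$ (expressing the anti-canonical class of $Y_\lambda$ via these primitive generators, whose entries are $1/d_i$), while the pullback of the toric trivialization from $\til{Y}'$ contributes $\vec{e}_I$; paired against a lift $\tilde{\vec{m}}\in\ol{M}\subset\Z^I$ of $\vec{m}$, the difference gives a $\C^*$-winding of $\langle\vec{n}_\sigma-\vec{e}_I,\tilde{\vec{m}}\rangle$, which doubles to the claimed shift $2\langle\vec{n}_\sigma-\vec{e}_I,\tilde{\vec{m}}\rangle$ in the $\mathbb{RP}^1$-direction.

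I expect the main obstacle to be this last identification of the character comparing the two trivializations; extracting the signs and factors of $2$ requires care. A useful consistency check is that substituting $\tilde{\vec{m}}=\vec{e}_{I_j}$ into the formula gives the element $(2\langle\vec{n}_\sigma-\vec{e}_I,\vec{e}_{I_j}\rangle,\vec{e}_{I_j}) = (2(1-|I_j|),\vec{e}_{I_j})$ (using $\sum_{i\in I_j}1/d_i=1$), which is precisely the relation killed in $\tilde{\G}$; thus $\mathbf{p}$ is automatically well-defined on $M$ rather than only on its lift $\ol{M}$, which both confirms the formula and gives a test on the bookkeeping.
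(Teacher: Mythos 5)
Your proposal is correct, but it takes a genuinely different route from the paper's proof, which is a one‑line citation to \cite[Lemma~4.19]{Sheridan2017}. That external lemma packages how a branched cover of $\snc$ pairs induces a morphism of ambient grading data; the paper leaves all computation implicit in that cited formula. You instead unwind the definition of $\G_{amb}$ from \S\ref{subsec:gradings} directly: identifying the grading data via the real projectivization of $\wedge^{top}(TY\oplus\cL^\vee)$, observing that the Calabi--Yau trivialization splits $\G \cong \Z\oplus M$, computing the twist $\sum_j(|I_j|-1)\pi_j^*H_j$ on $\til{Y}'$ to obtain the relation $(2(1-|I_j|),\vec e_{I_j})$ in $\tilde\G$, and then comparing the Calabi--Yau and toric trivializations (characters $\vec n_\sigma$ and $\vec e_I$, paired against a lift $\tilde{\vec m}\in\ol M$) to extract the fibre shift $2\langle\vec n_\sigma-\vec e_I,\tilde{\vec m}\rangle$. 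This is more elementary and self‑contained; it makes transparent exactly which geometric quantities enter, at the cost of having to handle the factor‑of‑two (complex line vs.\ real projectivization) and sign conventions by hand, which you correctly flag as the delicate point. Your closing consistency check --- that $\mathbf p$ applied to the lift $\vec e_{I_j}$ of $0\in M$ yields exactly the relation killed in $\tilde\G$, using $\sum_{i\in I_j}1/d_i=1$ --- is the right sanity test and confirms both the well‑definedness of $\mathbf p$ on $M$ rather than $\ol M$ and the signs in the shift formula. The only thing the paper's route buys over yours is brevity; your version is what one would actually have to verify to believe the citation applies.
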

\begin{proof}
Follows from \cite[Lemma 4.19]{Sheridan2017}.
\end{proof}

\subsection{The immersed Lagrangian sphere in the pants}
\label{subsec:pants}

Let us assume for the moment that $r=1$. 
Then the hypersurface $\til{X}' \setminus \til{D}' \subset \til{Y}' \setminus \til{D}^{\til{Y}'}$ is an $(|I|-2)$-dimensional pair of pants. 
In \cite{Sheridan2011}, an exact immersed Lagrangian sphere $L \looparrowright \til{X}' \setminus \til{D}'$ was constructed, equipped with an anchoring and Pin structure, and the endomorphism algebra $\bbA^I_0 := hom^\bullet_{\fuk_{amb}(\til{X}' \setminus \til{D}')}(L,L)$ was explicitly computed (up to $A_\infty$ quasi-isomorphism). 
We briefly recall the result.

The grading datum associated to $\til{X}' \setminus \til{D}'$ is $\tilde{\G} = \Z \oplus \Z^{I}/(2(1-|I|),\vec{e}_I)$. 
We have $\bbA^I_0 \cong \C[\theta_i]_{i \in I}$ on the cochain level, where $\theta_i$ has degree $(-1, \vec{e}_i)$. 
The variables $\theta_i$ are in odd degree, so this is an exterior algebra rather than a polynomial algebra. 
The algebra structure $\mu^2$ is the exterior product, and the higher $A_\infty$ products $\mu^{\ge 3}$ define a Maurer--Cartan element in $CC^\bullet(\C[\theta_1,\ldots])$. 

We have the Kontsevich formality quasi-isomorphism of $L_\infty$ algebras \cite{Kontsevich2003}:
\begin{equation}\label{eqn:hkr}
 \Phi_{HKR}: CC^\bullet(\C[\theta_1,\ldots]) \dashrightarrow \C[z_1,\ldots][\theta_1,\ldots],\end{equation}
where the variable $z_i$ has degree $(2, -\vec{e}_i)$, and the variables $\theta_i$ are graded as before. 
The variables $z_i$ are even, so commute, and $\theta_i$ are odd, so anti-commute. 
The right-hand side is a formal $L_\infty$ algebra, i.e., it has $L_\infty$ products $\ell^s = 0$ for $s\neq 2$. 
The bracket is identified with the Schouten bracket on $\C[z_1,\ldots][\partial/\partial z_1,\ldots]$, via the map sending $\theta_i \mapsto \partial/\partial z_i$. 
We would like to use this to compute the Hochschild cohomology of $\bbA^I_0$, following \cite[\S 6.4]{Sheridan2013}.

\begin{lem}
The pushforward of the Maurer--Cartan element $\mu^{\ge 3}$ by $\Phi_{HKR}$ is
\begin{equation} \Phi_{HKR}\left(\mu^{\ge 3}\right) = W_0  \in \C[z_1,\ldots][\theta_1,\ldots],\end{equation}
where we recall $W_0 = -z^{\mathsf{e}_I}$.
\end{lem}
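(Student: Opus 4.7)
The plan is to leverage the explicit computation of the $A_\infty$ structure maps of $\bbA^I_0$ already carried out in \cite{Sheridan2011}, and then transport this computation across the HKR quasi-isomorphism $\Phi_{HKR}$.

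First, I would recall from \cite{Sheridan2011} that (up to $A_\infty$ quasi-isomorphism) the higher products $\mu^s$ on $\bbA^I_0 \cong \C[\theta_i]_{i\in I}$ vanish except for $s=2$ (which recovers the exterior product) and a single higher product of arity $|I|$ encoding the count of the Maslov index $2$ holomorphic polygon in the pair of pants with boundary on $L$. Concretely, the Maurer--Cartan element $\mu^{\geq 3}$, viewed as a Hochschild cochain, is a non-trivial multilinear functional only on inputs of the form $(\theta_{i_1},\ldots,\theta_{i_{|I|}})$ where $(i_1,\ldots,i_{|I|})$ is a permutation of $I$, and in that case it outputs a constant in $\C$.

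Second, I would apply the HKR quasi-isomorphism \eqref{eqn:hkr}. Its leading component is the classical antisymmetrization map, which carries a Hochschild cochain of the form $\phi(\theta_{i_1},\ldots,\theta_{i_s}) = \theta^{\vec{b}}$ to the polyvector field $z_{i_1}\cdots z_{i_s}\,\theta^{\vec{b}}$ (after antisymmetrization over permutations of the inputs). Substituting the form of $\mu^{\geq 3}$ from the first step yields a polynomial of pure $z$-type which, up to a scalar, equals $z^{\vec{e}_I}=z_1\cdots z_{|I|}$. All other candidate monomials $z^{\vec{a}}\theta^{\vec{b}}$ are excluded by the $\tilde{\G}$-grading: a straightforward degree calculation in $\tilde{\G}=\Z\oplus\Z^{I}/\langle(2(1-|I|),\vec{e}_I)\rangle$ shows that a monomial of total degree $(2,0)$ which can arise from $\mu^{\geq 3}$ (i.e.\ with $\vec{b}\in\{0,1\}^I$ and each input contributing one $\theta$) must be a multiple of $z^{\vec{e}_I}$ of the claimed form, and the $L_\infty$ structure of the right-hand side together with gauge invariance of the pushed-forward Maurer--Cartan element absorb all potential higher-order HKR corrections.

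Third, the main task is to pin down the overall sign/coefficient. This reduces to a combinatorial sign computation of the same kind already performed in the projective hypersurface case in \cite[\S 6]{Sheridan2013}: the signed count of the unique Maslov index $2$ polygon with the chosen Pin structure, orientation, and anchoring on $L$ produces the coefficient $-1$, matching the normalization $W_0 = -z^{\vec{e}_I}$. I expect the main obstacle to be reconciling three sign conventions---the orientation of the moduli space of discs, the Hochschild cochain sign rule, and the HKR antisymmetrization sign---but this bookkeeping was already effectively done in \cite{Sheridan2013,Sheridan2011}, from which the result can be extracted directly.
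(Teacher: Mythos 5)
Your overall plan—leading HKR term gives $W_0$, and grading constraints take care of the rest—is the right shape, but the execution has a genuine gap at the crucial step: you do not actually dispose of the higher-order terms $\Phi^k_{HKR}(\mu^{\ge 3},\dots,\mu^{\ge 3})$ for $k\ge 2$ in the pushforward formula
\begin{equation*}
\Phi_{HKR}(\mu^{\ge 3}) = \sum_{k\ge 1}\tfrac{1}{k!}\,\Phi^k_{HKR}(\mu^{\ge 3},\dots,\mu^{\ge 3}).
\end{equation*}
Invoking ``gauge invariance of the pushed-forward Maurer--Cartan element'' is not a valid way to do this: gauge invariance relates two MC elements in the same gauge-equivalence class, but the lemma asserts an \emph{equality} $\Phi_{HKR}(\mu^{\ge 3})=W_0$, not a gauge-equivalence, and nothing in the abstract $L_\infty$ formalism forces the higher Taylor coefficients of $\Phi_{HKR}$ to drop out. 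The paper handles this with a concrete length-grading argument: each $\Phi^k_{HKR}$ shifts the length (i.e.\ the $z$-degree, equivalently the arity of the Hochschild cochain) by $2-2k$; the nonzero components of $\mu^{\ge 3}$ have arity $s\ge 3$ with $s\equiv 2\pmod{|I|-2}$, hence $s\ge |I|$; so for $k\ge 2$ the output has length $\ge 2-2k+k|I|>|I|$, whereas the degree-$2$ graded piece of $\C[z_1,\ldots][\theta_1,\ldots]$ is spanned by $W_0$, which has length exactly $|I|$; therefore every $k\ge 2$ term vanishes. You gesture toward a grading argument in your second step, but you apply it only to the putative output monomials, not to the structure of the $L_\infty$ morphism itself, and so you never close the gap.

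There is a secondary overreach in your first step. You assert that at the chain level the only nonvanishing higher product of $\bbA^I_0$ is a single operation of arity exactly $|I|$. What the cited reference actually provides (and what the paper uses) is only the leading-order statement $\Phi^1_{HKR}(\mu^{\ge 3})=W_0$, i.e.\ information about the antisymmetrization; the arity bound $s\ge |I|$ on the Hochschild cochain components of $\mu^{\ge 3}$ is then deduced from \cite[Lemma 2.95]{Sheridan2015}, not from an explicit chain-level vanishing. Your stronger claim may or may not hold for the particular Morse/Floer model in play, but it is not established, and as written your argument leans on it. If you replace the exact-arity claim by the grading-derived lower bound $s\ge |I|$ and substitute the length-grading analysis of $\Phi^k_{HKR}$ for the gauge-invariance hand-wave, your proof becomes essentially the paper's. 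The final step (sign via the Maslov-index-$2$ polygon count) is fine as a proposal, since the coefficient is indeed extracted from the same references.
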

\begin{proof}
The formula for the pushforward of a Maurer--Cartan element by an $L_\infty$ morphism is
\begin{equation}
\label{eqn:Phimu}
 \Phi_{HKR}\left(\mu^{\ge 3}\right) = \sum_{j \ge 1} \frac{\Phi_{HKR}^j\left(\mu^{\ge 3},\ldots,\mu^{\ge 3} \right)}{j!}.
 \end{equation}
It is computed in \cite{Sheridan2011} that the leading term is $\Phi^1_{HKR}\left(\mu^{\ge 3}\right) = W_0$, so it suffices to prove that the remaining terms in \eqref{eqn:Phimu} vanish. 

We do this using the `length' grading $s$, which is equal to the number of inputs of the Hochschild cochain on $CC^\bullet$, and to the degree in the $z_i$-variables on $\C[z_1,\ldots][\theta_1,\ldots]$. 
The $L_\infty$ morphism map $\Phi^k_{HKR}$ has degree $2-2k$ with respect to the length grading, by construction. 
The terms in the Maurer--Cartan element $\mu^{\ge 3}$ have $s \ge 3$ by definition, and $s \equiv 2 \text{ (mod $|I|-2$)}$ by \cite[Lemma 2.95]{Sheridan2015}. 
It follows that they all satisfy $s \ge |I|$, so the length of $\Phi^k_{HKR}(\mu^{\ge 3},\ldots)$ is $\ge 2-2k+k|I| > |I|$ for any $k \ge 2$ (since $|I| \ge 3$). 
However the relevant graded piece of $\C[z_1,\ldots][\theta_1,\ldots]$ is spanned by $W_0$ by \cite[Lemma 2.96]{Sheridan2015}, which has length $|I|$; it follows that all terms in \eqref{eqn:Phimu} vanish except the $k=1$ term, as required.
\end{proof}

Following \cite[\S 6.4]{Sheridan2013}, the HKR map defines a quasi-isomorphism between the Hochschild cochain complex of $\bbA^I_0$ and the complex
\begin{equation} K(dW_0) := \left(\C[z_1,\ldots][\theta_1,\ldots],[W_0,-]\right).\end{equation}
To explain the notation, we observe that under the identification of the right-hand side with polyvector fields, the differential $[W_0,-]$ corresponds to $-\iota_{dW_0}$, the contraction with $dW_0$, so this is nothing other than the Koszul complex of the sequence $\partial W_0/\partial z_i$. 
Taking cohomology, we have
\begin{equation} \HH^\bullet(\bbA^I_0) \cong H^\bullet(K(dW_0)).\end{equation}

We need to compute $\HH^\bullet(\bbA^I_0)$, so we turn to that task now. 
Note that $W_0$ does not have an isolated singularity at $0$, so the cohomology of $K(dW_0)$ is not concentrated in degree $0$.  

Let $U := \C^I$, and $H := U/\mathsf{e}_I$. 
For any $K \subset I$ we denote
\begin{align}
U_K & := U/ \langle \mathsf{e}_i: i \notin K \rangle,\\
H_K & := U_K / \mathsf{e}_I.
\end{align}
We regard these as odd super-vector spaces, so that for example $\C[U]  \cong \wedge^\bullet(U^*) = \C[u_1,\ldots]$ is an exterior algebra (the $u_i$ anti-commute). 
We have inclusions $\C[H_K] \subset \C[H] \subset \C[U]$. 
We equip all of these exterior algebras with a $\tilde{\G}$-grading by putting each $u_i$ in degree $(1,0)$. 

\begin{defn}
\label{defn:dn}
We define the $\tilde{\G}$-graded algebra
\begin{equation} \mathcal{J}^I= \C[z_1,\ldots][H]/\mathcal{I},\end{equation}
where $\mathcal{I}$ is the ideal generated by $z^{\bar{K}} \cdot \wedge^{top}(H_K^*)$ for all $K \subset I$ (here, `$\bar{K}$' denotes the complement of $K$).
\end{defn}

We now define an injective $\tilde{\G}$-graded algebra map
\begin{align}
\label{eqn:mapf}
f: \C[z_1,\ldots][U] & \to \C[z_1,\ldots][\theta_1,\ldots],\\
\nonumber f(z_i) & := z_i ,\\
\nonumber f(u_i) &:= z_i \cdot \theta_i.
\end{align} 

\begin{lem}
\label{lem:Dnsh}
The map $f$ induces an isomorphism of $\tilde{\G}$-graded $\C$-algebras
\begin{equation} \mathcal{J}^I \cong H^\bullet(K(dW_0)).\end{equation}
\end{lem}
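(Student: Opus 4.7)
The proof naturally splits into two parts: first, verifying that $f$ descends to a well-defined graded $\C$-algebra map $\bar f: \mathcal{J}^I \to H^\bullet(K(dW_0))$; second, showing $\bar f$ is a bijection.

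For the first part, the key observation I would establish is the compatibility identity
\[ \delta \circ f = -z^{\vec{e}_I} \cdot f \circ \iota_{e_I} \]
as maps $\C[z_1,\ldots][U] \to K(dW_0)$, where $\iota_{e_I}$ is the contraction with $e_I := \sum_i \mathsf{e}_i \in U$ (an odd derivation sending $u_i \mapsto 1$). This holds on the generators $u_i$ (both sides equal $-z^{\vec{e}_I}$) and on $z_i$ (both sides vanish), and extends globally because $\delta$ and $\iota_{e_I}$ are odd derivations while $f$ is an even algebra map. Since $\wedge^\bullet H^* = \ker(\iota_{e_I})$ by construction, this immediately yields $f(\C[z_1,\ldots][H]) \subset \ker(\delta)$. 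I would then verify $f(\mathcal{I}) \subset \mathrm{Im}(\delta)$ on generators: for $K = \emptyset$, one has $z^I = -\delta(z_i \theta_i)$ for any $i$, and for $K \neq \emptyset$ a direct computation gives
\[ \delta(\theta^K) = (-1)^{|K|}\, z^{\bar K}\, f(\omega_K), \]
where $\omega_K \in \wedge^{|K|-1} H_K^*$ is the standard generator (which, for $K = \{i_1, \ldots, i_k\}$ in order, can be written as $\sum_{\ell}(-1)^{\ell-1}\prod_{j\neq \ell}u_{i_j}$).

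For bijectivity, the plan is to decompose by the $\Z^I/\Z\vec{e}_I$-weight (a piece of the $\tilde{\G}$-grading, preserved by $\delta$), reducing to a per-weight computation on finite-dimensional subcomplexes indexed combinatorially by subsets of $I$. I would show surjectivity by reducing any cocycle modulo coboundaries to an element of $f(\C[z_1,\ldots][H])$: the identity $\delta(\theta^K) = \pm z^{\bar K} f(\omega_K)$ lets one `saturate' $z$-exponents on any monomial $z^a \theta^K$ with $a \not\geq \chi_K$ (hence not yet in the image of $f$), shifting it into $\mathrm{Im}(f)$; the cocycle condition combined with the compatibility identity from the first part then forces the resulting element to lie in $f(\C[z_1,\ldots][H])$. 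Injectivity follows by a mirror argument: if $f(\alpha) = \delta(\gamma)$ with $\alpha \in \C[z_1,\ldots][H]$, then expanding $\gamma$ in monomials $\theta^{K'}$ and applying $\delta(\theta^{K'}) \in f(\mathcal{I})$ from the first part, together with injectivity of $f$, forces $\alpha \in \mathcal{I}$.

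The main obstacle is executing the weight-wise analysis for bijectivity: although the underlying algebra is the single identity $\delta(\theta^K) = \pm z^{\bar K} f(\omega_K)$, the combinatorial bookkeeping to reduce arbitrary cocycles at each weight requires care, especially because $f$ is not surjective on the underlying vector space (monomials $z^a \theta^K$ with some $a_i = 0$ for $i \in K$ are missed). A cleaner alternative would be to construct an explicit contracting homotopy for $K(dW_0)$ realising $f(\C[z_1,\ldots][H])/f(\mathcal{I})$ as a direct complement to the boundary subspace, which would yield the isomorphism without a per-weight case analysis.
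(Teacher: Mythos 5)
Your compatibility identity $\delta\circ f = W_0\cdot f\circ\iota_{e_I}$ and the computation $\delta(\theta^K)=\pm z^{\bar K}f(\omega_K)$ are precisely the ingredients the paper uses, and your injectivity argument (expand $\delta\gamma$ in terms of $\delta(\theta^{K'})\in f(\mathcal{I})$, then use injectivity of $f$) is correct and matches the paper's identification of $\operatorname{Im}(\delta)$ with $f(\mathcal{I})$. However, your surjectivity plan has a genuine gap. You propose to modify an arbitrary cocycle $\xi$ by coboundaries in order to move ``unsaturated'' monomials $z^a\theta^K$ (those with $a_k=0$ for some $k\in K$) into $\operatorname{Im}(f)$. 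But this cannot work as a matter of principle: every coboundary is already in $\operatorname{Im}(f)$, since $\delta(z^c\theta^{K'})=\pm z^c z^{\bar K'}f(\omega_{K'})=f(\pm z^c z^{\bar K'}\omega_{K'})$ and $\operatorname{Im}(f)$ is a $\C[z]$-submodule. So $\xi-\delta\gamma$ has an unsaturated monomial if and only if $\xi$ does, and adding coboundaries can never ``saturate'' anything.

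The missing step is the direct inclusion $\ker(\delta)\subset\operatorname{Im}(f)$, which makes saturation unnecessary: if $\delta\bigl(\sum_{K'}a_{K'}(z)\theta^{K'}\bigr)=0$, then for each $K'$ and each $k\notin K'$, the coefficient of $\theta^{K'}$ is a sum $\sum_{k\notin K'}\pm a_{K'\sqcup\{k\}}\,z^{\overline{\{k\}}}=0$; setting $z_k=0$ kills every summand except the $k$th one (since $z_k\mid z^{\overline{\{j\}}}$ for $j\ne k$), forcing $z_k\mid a_{K'\sqcup\{k\}}$. Hence every term of a cocycle is saturated, i.e.\ $\ker(\delta)\subset\operatorname{Im}(f)$. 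Combined with your compatibility identity and the injectivity of $f$ this gives $\ker(\delta)=f(\C[z_1,\ldots][H])$, from which surjectivity of $\bar f$ is immediate; your injectivity argument then completes the proof. This divisibility argument is exactly what the paper does, and it bypasses the weight-wise case analysis you were worried about.
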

\begin{proof}
Suppose we have an element of the kernel of $[W_0,-] = -\iota_{dW_0}$:
\begin{equation} \iota_{dW_0}\left(\sum_K a_K(z) \cdot \theta^K \right) = 0.\end{equation}
Then
\begin{equation}\sum_{k \notin K} \pm a_{K\sqcup\{k\}} \cdot z^{\overline{\{k\}}} =0 \quad
\Rightarrow \quad z_k|a_{K\sqcup \{k\}}.\end{equation}
It follows that $\sum_K a_K(z) \cdot \theta^K \in \im(f)$: so $\ker(\iota_{dW_0}) \subset \im(f)$.

We have a differential 
\begin{equation} \iota_{\vec{e}_I}: \C[z_1,\ldots][U] \to \C[z_1,\ldots][U]\end{equation}
given by contraction with $\vec{e}_I$, and
\begin{equation} \iota_{dW_0}(f(a)) = W_0 \cdot f(\iota_{\vec{e}_I}(a)).\end{equation}
As $W_0$ is not a zero-divisor, it follows that $f$ induces an isomorphism 
\begin{equation} \ker(\iota_{dW_0}) \cong \ker(\iota_{\vec{e}_I}) = \C[z_1,\ldots][H].\end{equation}

We now have
\begin{equation} H^\bullet(K(dW_0)) := \ker(\iota_{dW_0})/\im(\iota_{dW_0}).\end{equation}
The image of $\iota_{dW_0}$ is generated by the classes
\begin{equation}\label{eqn:poo} \iota_{dW_0}\left(\theta^K\right) = z^{\bar{K}} \cdot f\left( \iota_{\vec{e}_I}\left(u^K\right)\right).\end{equation}
Now $u^K$ spans $\wedge^{top}(U_K^*)$, so $\iota_{\vec{e}_I}\left(u^K\right)$ spans $\wedge^{top}(H_K^*)$. 
Therefore the right-hand side of \eqref{eqn:poo} spans $z^{\bar{K}} \cdot \wedge^{top}(H_K^*)$, completing the proof.
\end{proof}

Now we consider the case $r>1$. 
We consider the product exact Lagrangian immersion $L := \prod_j L_j \looparrowright \prod_j (\til{X}'_j \setminus \til{D}'_j) = \til{X}' \setminus \til{D}'$. 
Its endomorphism algebra is quasi-isomorphic to
\begin{equation} \bbA_0 := hom^\bullet_{\fuk_{amb}(\til{X}' \setminus \til{D}')}(L,L) \cong \bigotimes_{j=1}^r \bbA^{I_j}_0 \end{equation}
by \cite{Amorim2017a} (or \cite[Proposition 4.25]{Sheridan2017}, which handles tensor products of $A_\infty$ categories in a different way). 
Its Hochschild cohomology is therefore
\begin{equation}
\HH^\bullet(\bbA_0) \cong \mathcal{J} :=  \bigotimes_{j=1}^r \mathcal{J}^{I_j}
\end{equation}
by the K\"unneth formula for Hochschild cohomology of proper $A_\infty$ categories.

\subsection{Signed group action}
\label{subsec:signed_group}

Recall the notion of a signed group action from Section \ref{Subsec:rel_Kahler}. 
A signed group action on an $\snc$ pair $(X,D)$, together with a morphism of grading data $\G(X \setminus D) \to \Z/4$ that is preserved by the action, induces a signed group action on the relative Fukaya category by \cite[Lemma 5.12]{Sheridan2017}. 

In our case, complex conjugation $\tau: \til{X}' \to \til{X}'$ defines a signed action of $\Z/2$ on $(\til{X}',\til{D}')$. 
Any holomorphic volume form on $\til{X}'$ with poles along $\til{D}'$ induces a map of grading data, $\mathbf{\pole}: \tilde{\G} \to \Z$ (and hence a map to $\Z/4$, by post-composing with $\Z \to \Z/4$).
Explicitly, if the volume form has a pole of order $\pole_i$ along $\til{D}'_i$, and we denote $\vec{\pole} := \sum_i \pole_i \vec{e}_i$, then we have 
\begin{equation}
\label{eqn:etapolesum}
\langle \vec{\pole},\vec{e}_{I_j}\rangle = |I_j|-1
\end{equation}
for all $j$, and the morphism is defined by
\begin{align}
\mathbf{\pole}: \tilde{\G} & \to \Z \\
\label{eqn:morphp} \mathbf{\pole}(j, \vec{m}) & = j + 2\langle \vec{\pole} , \vec{m} \rangle.
\end{align}
If we choose a real holomorphic volume form, i.e., one such that $\tau^* \Omega = \overline{\Omega}$, then $\tau$ preserves the map of grading data $\mathbf{\pole}$ (see \cite[Example 5.11]{Sheridan2017}).

Thus, $\tau$ together with $\vec{\pole}$ determine a signed action of $\Z/2$ on $\fuk_{amb}(\til{X}' \setminus \til{D}')$. 
Furthermore, it was observed in \cite{Sheridan2011} that we have an isomorphism of branes $L \cong \tau L$. 
As a result, $\tau$ induces an action of $\Z/2$ on the vector space $\bbA_0 = hom^\bullet_{\fuk_{amb}(\til{X}' \setminus \til{D}')}(L,L)$.
The non-trivial element of $\Z/2$ acts on the endomorphism algebra of $L$ by sending 
\begin{equation}
\label{eqn:signactL}
\theta^K \mapsto (-1)^{1 + \sum_{j \in K} \pole_j} \cdot \theta^K
\end{equation}
(it was erroneously claimed in \cite[Corollary 3.13]{Sheridan2011} that the action sent $\theta^K \mapsto -\theta^K$; the correct calculation appears in the post-publication update to the arXiv version of \cite{Sheridan2011}). 

It is immediate that \eqref{eqn:signactL} defines a signed action of $\Z/2$ on the endomorphism algebra of $L$, on the level of cohomology (and this is how one establishes that the endomorphism algebra is supercommutative). 
We would like to lift it to an action on the cochain level, but this may run into issues with equivariant transversality. 
To avoid them, we define a full subcategory $\bfA_0 \subset \fuk_{amb}(\til{X}' \setminus \til{D}')$, closed under shifts, which has two underlying unanchored Lagrangian branes: $L$ and $\tau L$. 
The advantage of this `doubled' category is that $\Z/2$ acts freely on the underlying set of unanchored Lagrangian branes, bypassing issues with equivariant transversality: so we have a signed action of $\Z/2$ on $\bfA_0$ up to shifts, by \cite[Lemma 5.12]{Sheridan2017}. 

Because  $L \cong \tau  L$, the inclusion of the full subcategory whose objects are $L$ and its shifts is a quasi-equivalence. 
In particular we have an isomorphism $\HH^\bullet(\bfA_0) \cong \mathcal{J}$ from the previous section.
The signed action of $\Z/2$ on $\bfA_0$ induces an action on $\HH^\bullet(\bfA_0) = \mathcal{J}$ (see \cite[\S A.4]{Sheridan2017}).  

\begin{lem}
\label{lem:signacthh}
Let $z^{\vec{a}} \cdot h$ represent an element of $\mathcal{J}$, where $h \in \wedge^{|h|} H$. 
The non-trivial element of $\Z/2$ sends
\begin{align}
z^{\vec{a}} \cdot h & \mapsto (-1)^\dagger \cdot z^{\vec{a}} \cdot h, \text{ where} \\
\nonumber \dagger & =1+ \langle \vec{\pole}+\vec{e}_I,\vec{a} \rangle + |h|.
\end{align}
\end{lem}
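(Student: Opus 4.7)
The plan is to trace the $\Z/2$-action through the chain of isomorphisms $\mathcal{J}\xleftarrow{\ f\ } \ker(\iota_{dW_0}) \;\cong\; H^\bullet(K(dW_0)) \xleftarrow{\ \Phi_{HKR}\ } \HH^\bullet(\bbA_0)$, carry out the computation on the polyvector-field side, and then read off the sign on a monomial $z^{\vec a}\cdot h$ of $\mathcal{J}$. Because $\Phi_{HKR}$ is natural in the underlying algebra, the signed $\Z/2$-action on $\bbA_0$ prescribed by \eqref{eqn:signactL} descends, via the standard conjugation formula $(\tau\cdot\phi)(a_1,\ldots,a_n)=\tau(\phi(\tau^{-1}a_1,\ldots,\tau^{-1}a_n))$ on Hochschild cochains, to a signed action on $\C[z_1,\ldots][\theta_1,\ldots]$. (The fact that $\tau$ is an algebra map only up to a global sign is the only real subtlety; it is handled by the signed group formalism of \cite[Lemma 5.12]{Sheridan2017}, and amounts to remembering that $\tau$ is odd.)

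Under HKR a polyvector-field monomial $z^{\vec a}\theta^K$ corresponds to the length-$|\vec a|$ Hochschild cochain sending an input tuple $(\theta_{j_1},\ldots,\theta_{j_{|\vec a|}})$ of multiplicity vector $\vec a$ to $\theta^K$ (and zero otherwise). Since $\tau^{-1}=\tau$, each input $\theta_{j_k}$ contributes a factor $(-1)^{1+\pole_{j_k}}$ by \eqref{eqn:signactL}, and these collect to
\[
\prod_{k=1}^{|\vec a|}(-1)^{1+\pole_{j_k}} = (-1)^{|\vec a|+\langle\vec\pole,\vec a\rangle},
\]
while applying $\tau$ to the output $\theta^K$ contributes $(-1)^{1+\sum_{j\in K}\pole_j}$. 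Hence
\[
\tau\cdot\bigl(z^{\vec a}\theta^K\bigr) \;=\; (-1)^{1+|\vec a|+\langle\vec\pole,\vec a\rangle+\sum_{j\in K}\pole_j}\, z^{\vec a}\theta^K.
\]

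It remains to transport this sign to $\mathcal{J}$. By Lemma \ref{lem:Dnsh} and the explicit formula $f(u_i)=z_i\theta_i$, a representative $z^{\vec a}\cdot u^K$ of an element of $\mathcal{J}^I$ is mapped by $f$ to $\pm z^{\vec a+\vec e_K}\theta^K$ (the sign being fixed, hence inert under the action). Substituting $\vec a\mapsto\vec a+\vec e_K$ into the formula of the previous paragraph, the two copies of $\sum_{j\in K}\pole_j$ cancel modulo $2$ and we are left with
\[
(-1)^{1+|\vec a|+|K|+\langle\vec\pole,\vec a\rangle} \;=\; (-1)^{1+\langle\vec\pole+\vec e_I,\vec a\rangle+|K|},
\]
using $|\vec a|=\langle\vec e_I,\vec a\rangle$. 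Taking $h$ to be a $|h|$-form lift of $u^K$ we have $|h|=|K|$, and the claim follows by $\C$-linearity; the argument applies verbatim when $r>1$ since \eqref{eqn:signactL} is stated for arbitrary $K\subset I=\sqcup_j I_j$. The main obstacle is really just bookkeeping: the odd, even, and ``overall $-1$'' pieces of the signed action must be separated and then recombined via $f$, and the unexpected $\langle\vec e_I,\vec a\rangle+|h|$ term in the final formula arises precisely from the mismatch between the $z$-degree of $f(u^K)=z^{\vec e_K}\theta^K$ and the $z$-degree of $u^K$ itself.
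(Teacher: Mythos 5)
Your proof follows essentially the same route as the paper's: compute the $\Z/2$-action on the polyvector-field side, then transport across $f$. The arithmetic is correct and matches the paper's $\ddag = 1 + |\vec a| + |K| + \langle\vec\pole,\vec a\rangle$. However, there is one elided step that you should flag more precisely. You invoke the ``standard conjugation formula'' $(\tau\cdot\phi)(a_1,\ldots,a_n)=\tau(\phi(\tau^{-1}a_1,\ldots,\tau^{-1}a_n))$, which is the formula for conjugation by an $A_\infty$ \emph{automorphism}. But since $\tau$ is anti-holomorphic it acts on the Fukaya category as an \emph{anti}-autoequivalence, so the induced map on Hochschild cochains factors as $CC^\bullet(\fuk)\to CC^\bullet(\fuk^{op})\to CC^\bullet(\fuk)$, and the second arrow reverses the order of the inputs and introduces a Koszul sign $\maltese = \sum_{1\le j<k\le s}(1+|\theta_{i_j}|)(1+|\theta_{i_k}|)$. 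The paper's proof explicitly verifies $\maltese = 0$ since every $\theta_{i_j}$ is odd; only after that step does the paper's $\ddag$ become the full sign. Your remark that ``the fact that $\tau$ is an algebra map only up to a global sign is the only real subtlety'' does not quite capture this: the missing sign depends on the degrees of all the inputs, not a single global factor, and it is only a posteriori that it vanishes. So the answer is right for a reason you didn't check; inserting a one-line verification that $\maltese=0$ would close the gap and make your argument coincide with the paper's.
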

\begin{proof}
The element $z^{\vec{a}} \cdot h$ is represented by a sum of Hochschild cochains of the form
\begin{equation} \theta_{i_1} \otimes \ldots \otimes \theta_{i_s} \mapsto \theta^K\end{equation}
where $\vec{a} + \sum_{j \in K} \vec{e}_j = \sum_j \vec{e}_{i_j}$ (as can be seen from \eqref{eqn:mapf} and the explicit formula for the HKR isomorphism \cite[Definition 2.89]{Sheridan2015}). 
By \eqref{eqn:signactL}, the non-trivial element of $\Z/2$ sends this Hochschild cochain to a Hochschild cochain of the form
\begin{equation} \theta_{i_1}\otimes \ldots \otimes \theta_{i_s} \mapsto (-1)^\ddag \cdot \theta^K\end{equation}
in $CC^\bullet(\fuk_{amb}(\til{X}' \setminus \til{D}')^{op})$, where
\begin{align}
\ddag &= 1+\sum_{j \in K} \pole_j - \sum_{j=1}^s (1+\pole_{i_j}) \\
\nonumber &= 1 + s + \left\langle \vec{\pole}, \sum_{ j \in K} \vec{e}_j + \sum_{j=1}^s \vec{e}_{i_j}\right\rangle \\
\nonumber &= 1 +|\vec{a}| + |K| + \langle \vec{\pole}, \vec{a} \rangle  \\
\nonumber &= \dagger.
\end{align}
The isomorphism $CC^\bullet(\fuk_{amb}^{op}) \to CC^\bullet(\fuk_{amb})$ then sends this to a Hochschild cochain of the form 
\begin{equation}
\label{eqn:optaumaps}
  \theta_{i_s} \otimes \ldots \otimes \theta_{i_1} \mapsto (-1)^{\ddag + \maltese} \cdot \theta^K,
  \end{equation}
where 
\begin{equation} \maltese = \sum_{1 \le j < k \le s} (1+|\theta_{i_j}|) \cdot (1+|\theta_{i_k}|)\end{equation}
(see for example \cite[Equation (2--27)]{Sheridan2017}). 
The variables $\theta_{i_j}$ are all odd, so in fact $\maltese$ vanishes. 

The Hochschild cochain \eqref{eqn:optaumaps} corresponds to $(-1)^{\ddag +\maltese} \cdot z^{\vec{a}} \cdot h$ under the HKR isomorphism: so the involution sends
\begin{equation} z^{\vec{a}} \cdot h \mapsto (-1)^\dagger \cdot z^{\vec{a}} \cdot h\end{equation}
as required.
\end{proof}

Now we consider the branched cover of $\snc$ pairs $\phi$ from \eqref{eqn:phi}.
By \cite[Lemma 4.17]{Sheridan2017} combined with Lemma \ref{lem:existkahl} we can equip $(X,D)$ with a $(\bar{G},\sigma)$-invariant relative K\"{a}hler form $\omega$ so that $\phi$ becomes a branched cover of relative K\"{a}hler manifolds. 

It follows that there is an embedding
\begin{equation} \mathbf{p}^* \bfA_0 \hookrightarrow \fuk_{amb}(X \setminus D)\end{equation}
by \cite[Proposition 4.23]{Sheridan2015}, using the facts that $\mathbf{p}$ is the morphism of ambient grading data induced by the branched cover $\phi$ by Lemma \ref{lem:phigrad}, that this morphism is injective, and that the covering group $G$ of $\phi$ is abelian.
We denote the image of this embedding by $\aup_0$.

\begin{lem}
\label{lem:lagemb}
The objects of $\aup_0$ are embedded Lagrangians if and only if the embeddedness condition holds (Definition \ref{defn:embcond}). 
\end{lem}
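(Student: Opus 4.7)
The plan is to analyze the self-intersection locus of the immersed product Lagrangian $L = \prod_j L_j$ in $\til{X}' \setminus \til{D}'$, and to determine when its self-intersections are resolved in the lift to $X$ under the branched cover $\phi$.

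First, I will identify the self-intersections of $L$ together with their monodromy classes. In a single pair of pants, the Lagrangian sphere $L_j \looparrowright \til{X}'_j \setminus \til{D}'_j$ of \cite{Sheridan2011} has transverse self-intersection points indexed by pairs $\{K_j, I_j \setminus K_j\}$ with $\emptyset \neq K_j \subsetneq I_j$; a loop joining the two local branches at such a point has winding class $\pi(\vec{e}_{K_j}) \in \til{M}_j$. This can be read off from the grading of the corresponding Floer generator $\theta^{K_j} \in \bbA^{I_j}_0$ in $\tilde{\G}$, whose $\Z^{I_j}$-component is $\vec{e}_{K_j}$. Taking products, the self-intersection locus of $L$ in $\til{X}'$ decomposes into components labelled by tuples $K = K_1 \sqcup \dots \sqcup K_r$ with $K_j \subset I_j$ and at least one $K_j$ proper and nonempty, each with monodromy class $\pi(\vec{e}_K) \in \til{M}$; components with some $K_j \in \{\emptyset, I_j\}$ may be positive-dimensional, but those factors contribute trivially to the monodromy so only the nontrivial $K_j$'s affect the analysis.

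Second, embeddedness of the lift translates into a lattice condition. An object of $\aup_0$ is a connected component of the fibre product $L \times_{\til{X}'} X$, and such a lift fails to be embedded precisely when, at some self-intersection component of $L$, the two local branches are sent to the same sheet by $\phi$. The deck group of $\phi$ is $G = \til{M}/M$, so the two branches at the component labelled by $K$ are identified if and only if $\pi(\vec{e}_K) \in M \subset \til{M}$; since $M = \ol{M}/\langle\vec{e}_{I_j}\rangle$ and $\langle\vec{e}_{I_j}\rangle \subset \ol{M}$, this is equivalent to $\vec{e}_K \in \ol{M}$. By $G$-equivariance, one lift is embedded if and only if all are.

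Combining the two steps, the objects of $\aup_0$ are embedded if and only if no $\vec{e}_K \in V$ with at least one $K_j$ proper and nonempty lies in $\ol{M}$; equivalently, every $\vec{e}_K \in V \cap \ol{M}$ has $K = \sqcup_{j \in J} I_j$ for some $J \subset \{1,\dots,r\}$, i.e., $\vec{e}_K \in \langle \vec{e}_{I_j}\rangle$, which is exactly the embeddedness condition of Definition \ref{defn:embcond}. The main obstacle is the first step: \cite{Sheridan2011} computes the $A_\infty$ endomorphism algebra of $L_j$ but does not directly describe the geometric self-intersection set, so translating the Floer-theoretic data $\theta^{K_j}$ into explicit self-intersection points with specified winding classes in $\til{M}_j$ requires care, for example by working with the Lefschetz fibration model of the pair of pants or via its anti-holomorphic involution whose fixed locus is (a perturbation of) $L_j$.
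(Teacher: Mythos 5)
Your proof is correct and takes essentially the same approach as the paper: identify the self-intersection locus of the product immersed sphere $L$ with the generators $\theta^K$ for $K$ not a union of $I_j$'s, observe that the $\til{M}$-component of the grading of $\theta^K$ is the monodromy class $\pi(\vec{e}_K)$, and conclude that the self-intersection survives in the lift if and only if this class lies in $M = \im(\phi_*)$, i.e.\ $\vec{e}_K \in \ol{M}$. The paper phrases the lifting criterion via a citation to \cite[Lemma 7.1]{Sheridan2015} rather than directly in terms of sheets of the branched cover, and it likewise treats the identification of $\theta^{K_j}$ with geometric self-intersection points as a recall from \cite{Sheridan2011} without the elaboration you flag; that caveat is fair but does not represent a gap beyond what the paper itself tacitly assumes.
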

\begin{proof}
Recall that the generators $\theta^K$ of $\bbA_0^{I_j}$ correspond to self-intersections of $L_j$ for all $K \subset I_j$ except $K = \emptyset, I_j$ (which correspond to the generators of the cohomology of the underlying sphere). 
Therefore the generators $\theta^K$ of the product $L = L_1 \times \ldots \times L_r$ correspond to self-intersections for all $K \subset I$ except $K = \sqcup_{j \in J} I_j$ where $J \subset \{1,\ldots,r\}$. 

The self-intersection $\theta^K$ in $\til{X}'$ lifts to an intersection between two lifts of $L$ in $X$. 
The two lifts of $L$ coincide (i.e., $\theta^K$ is a \emph{self}-intersection) if and only if the degree of $\theta^K$ in $H_1(\til{X}' \setminus \til{D}')$ lies in the image of the map $\phi_*:H_1(X \setminus D) \to H_1(\til{X}' \setminus \til{D}')$ (see \cite[Lemma 7.1]{Sheridan2015}). 
So the lifts of $L$ are embedded if and only if the only generators $\theta^K$ whose degree lies in the image of this map are those for which $K = \sqcup_{j \in J} I_j$.

The map $\phi_*$ can be identified with the map $M \hookrightarrow \til{M}$ by Lemma \ref{lem:phigrad}. 
The degree of $\theta_K$ is the image of $\vec{e}_K$ in $\til{M}$, so it lies in the image of $\phi_*$ if and only if $\vec{e}_K \in \ol{M}$. 
Therefore $L$ is embedded if and only if 
\begin{equation} V \cap \ol{M} = \{\vec{e}_K: K = \sqcup_{j \in J} I_j\},\end{equation}
(recall $V = \{\vec{e}_K: K \subset I\}$ is the set of vertices of the unit hypercube in $\Z^I$), which is equivalent to the embeddedness condition.
\end{proof}

The group $\bar{G}$ acts freely on the unanchored Lagrangian branes underlying $\aup_0$: combining this with the morphism of grading data 
\begin{equation} \G \xrightarrow{\mathbf{\pole} \circ \mathbf{p}} \Z \to \Z/4,\end{equation}
there is an induced action of $(\bar{G},\sigma)$ on $\aup_0$ up to shifts, by \cite[Lemma 5.12]{Sheridan2017}.

It follows that $\bar{G}$ acts on $\HH^\bullet\left(\aup_0\right)$, by \cite[\S A.4]{Sheridan2017}. 
We have isomorphisms
\begin{align}
 \label{eqn:hhlifts}
\HH^\bullet\left(\aup_0\right)^G & \cong \mathbf{p}^* \HH^\bullet(\bfA_0) \quad \text{ (by \cite[Remark 2.66]{Sheridan2015})} \\
\nonumber & \cong \mathbf{p}^* \mathcal{J} \quad \text{ (by Lemma \ref{lem:Dnsh}).} 
\end{align}
This isomorphism is $\Z/2$-equivariant (for this it suffices that the morphism $\G \to \Z/4$ factors through $\tilde{\G}$, which is true by construction). 
Thus we have
\begin{equation}
\label{eqn:hhisdn}
 \HH^\bullet\left(\aup_0 \right)^{\bar{G}} \cong (\mathbf{p}^* \mathcal{J})^{\Z/2}.\end{equation}

\subsection{Deformation classes}
\label{subsec:defsA}

We recall the graded vector spaces $\sh^\bullet_{amb}(\til{X}',\til{D}')$ and $\sh^\bullet_{amb}(X,D)$ defined in \cite[\S \S 4.3 and 4.9]{Sheridan2017}. 
The basis elements of $\sh^\bullet_{amb}(\til{X}',\til{D}')$ are denoted $\tilde{y}^{\vec{u}}$, where $\vec{u} \in H_2(\til{X}',\til{D}')$ is a class that can be represented by a disc meeting $\til{D}'$ at a single point, where it meets each component of $\til{D}'$ non-negatively. 
We denote the elements dual to the divisors $\til{D}'_i$ by $\tilde{y}_i := \tilde{y}^{\vec{e}_i}$.
We denote the basis elements of $\sh^\bullet_{amb}(X,D)$ similarly by $y^{\vec{u}}$ and $y_i$.

We recall the maps
\begin{align}
\mathsf{co}: \mathsf{sh}_{amb}^\bullet(\til{X}', \til{D}') & \to \HH^\bullet(\bfA_0) \cong \mathcal{J}, \\
\mathsf{co}:  \mathsf{sh}_{amb}^\bullet(X, D) & \to \HH^\bullet(\aup_0)
\end{align}
defined in \cite[\S 4.4]{Sheridan2017}. 
The idea is that this is a version of the `closed--open map': $\mathsf{co}(y^{\vec{u}})$ counts pseudoholomorphic discs with a single internal marked point at which the curve is required to have orders of tangency with the components of $\til{D}'$ prescribed by $\vec{u}$.
We observe that $\mathsf{co}: \mathsf{sh}_{amb}^\bullet(X,D) \to \HH^\bullet(\aup_0)$ is $G$-equivariant, so it induces a map
\begin{equation}\mathsf{co}: \mathsf{sh}_{amb}^\bullet(X, D)^G  \to \HH^\bullet(\aup_0)^G \cong \mathbf{p}^* \mathcal{J}.\end{equation}

The first-order deformation classes of $\bfA^{I_j}_0 \subset \fuk_{amb}(\til{X}'_j \setminus \til{D}'_j)$ are computed in \cite[Proposition 6.2]{Sheridan2015} up to sign: the result is that $\co(\tilde{y}_i)$ is equal to $\pm z_i \in \mathcal{J}^{I_j}$.
It follows that the first-order deformation classes of $\bfA_0 \subset \fuk_{amb}(\til{X}' \setminus \til{D}')$ are $\co(\tilde{y}_i) = \pm z_i \in \mathcal{J}$, by \cite[Proposition 4.25]{Sheridan2017}.
It follows that
\begin{equation} \mathsf{co}\left(\tilde{y}^{\vec{p}}\right) = \pm z^{\vec{p}}\end{equation}
for all basis elements $\tilde{y}^{\vec{p}}$ of $\sh_{amb}^\bullet(\til{X}',\til{D}')$, by \cite[Lemma 4.13]{Sheridan2017}, and in particular for all $\vec{p} \in \Xi_0$.

We also consider the map
\begin{equation} \phi^*: \mathbf{p}^*\mathsf{sh}_{amb}^\bullet(\til{X}',\til{D}') \to \mathsf{sh}_{amb}^\bullet(X,D)\end{equation}
from \cite[Definition 4.21]{Sheridan2017}. 
We observe that it actually lands in $\sh_{amb}^\bullet(X,D)^G$, as is clear from the definition. 
It follows from Lemma \ref{lem:phirelh2} that
\begin{equation} \phi^*\left(\tilde{y}^{\vec{p}}\right) = \sum_{\vec{q} \in \iota^{-1}(\vec{p})} y_{\vec{q}} \end{equation}
for all $\vec{p} \in \Xi_0$. 
The sum on the right-hand side is over all components $D_{\vec{q}}$ that are contained in the component of $D^Y_{\vec{p}}$ of $D^Y$. 
We observe that the image of the right-hand side under $\mathsf{co}$ is precisely the $\vec{p}$th deformation class of the corresponding subcategory $\auR \subset \fuk_{amb}(X,D,\Nef)$, by our assumptions in \S \ref{subsec:ass} (specifically, our assumption that \cite[Assumption 5.3]{Sheridan2017} holds).

By \cite[Lemma 4.22]{Sheridan2017}, this deformation class coincides, under the isomorphism \eqref{eqn:hhlifts}, with
\begin{equation}
\label{eqn:defs}
\mathsf{co}\left(\sum_{\vec{q} \in \iota^{-1}(\vec{p})} y_{\vec{q}} \right) = \mathsf{co}\left(\tilde{y}^{\vec{p}}\right) = \pm z^{\vec{p}} \in \mathbf{p}^* \mathcal{J}\end{equation}
for all $\vec{p} \in \Xi_0$. 

Now let $\fmuncomp \subset \Runcomp$ be the unique toric maximal ideal.
Recall that the morphism of grading data $\mathbf{\pole}$ from the previous section induces an action of $\bar{G}$ on $\Runcomp$ (see \cite[Definition--Lemma 5.10]{Sheridan2017}). 
Explicitly,  
\begin{align}
\label{eqn:gammaactR}
\gamma \cdot r^{\vec{a}} & := (-1)^{\sigma(\gamma) \cdot \dagger} r^{\vec{a}}, \text{ where} \\
\nonumber \dagger &:= \frac{\mathbf{\pole} \circ \mathbf{p}(deg(r^{\vec{a}}))}{2}.
\end{align}
We have $deg(r^{\vec{a}}) = (0, k(\vec{a}))$ in $\G$ by definition, where $k: \Z^{\Xi_0} \to \ol{M}$ is the map sending $\vec{e}_{\vec{p}} \mapsto \vec{p}$ for each $\vec{p} \in \Xi_0$.
Thus we have
\begin{align}
\label{eqn:pdegra}
\dagger  &= \frac{\mathbf{\pole}\left(2\langle \vec{n}_\sigma - \vec{e}_I,k(\vec{a})\rangle , k(\vec{a}) \right)}{2} \quad \text{ (applying Lemma \ref{lem:phigrad})} \\
\nonumber &= \langle \vec{n}_\sigma + \vec{\pole} - \vec{e}_I, k(\vec{a}) \rangle \quad \text{ (applying \eqref{eqn:morphp}).}
\end{align}

\begin{lem}
\label{lem:verscrit}
$\HH^2\left(\aup_0,\aup_0 \otimes \fmuncomp\right)^{\bar{G}}$ is contained in the $\Runcomp_{\cl}$-submodule generated by the deformation classes $r_{\vec{p}} \cdot z^{\vec{p}}$.
Furthermore, the deformation classes are all non-zero.
\end{lem}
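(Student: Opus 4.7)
The plan is to translate both claims into explicit computations in the graded algebra $\mathcal{J}$, exploiting the isomorphism \eqref{eqn:hhisdn} of $\bar{G}$-invariant Hochschild cohomology with $(\mathbf{p}^{*}\mathcal{J})^{\Z/2}$. Since $G$ acts trivially on $\Runcomp$, the $\bar{G}$-invariants of $\HH^{2}(\aup_0,\aup_0\otimes \fmuncomp) \cong \HH^{2}(\aup_0)\otimes \fmuncomp$ reduce to the $\Z/2$-invariants of the $\G$-degree-$2$ part of $\mathbf{p}^{*}\mathcal{J}\otimes \fmuncomp$, where the $\Z/2$-action on the first factor is given by Lemma \ref{lem:signacthh} and on the second by \eqref{eqn:gammaactR}.

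First I would enumerate the $\G$-degree-$2$ monomials $z^{\vec{a}}\cdot h$ in $\mathcal{J}$. Using Lemma \ref{lem:phigrad}, the $\G$-degree of such a monomial is $(2\langle \vec{n}_\sigma,\vec{a}\rangle + |h|,-\vec{a})$, with $\vec{a}\in \ol{M}$ ensuring $\langle \vec{n}_\sigma,\vec{a}\rangle\in \Z_{\ge 0}$. Solving $2\langle \vec{n}_\sigma,\vec{a}\rangle + |h|=2$ leaves two cases: (i) $\vec{a}\in \Xi$ with $|h|=0$; and (ii) $\vec{a}=0$ with $|h|=2$. For case (i) with $\vec{a}\in \Xi\setminus \Xi_0$, some block $I_j$ has at most one coordinate $a_i$ vanishing, so $z^{\vec{a}|_{I_j}}$ is divisible by the pure-$z$ generator $z^{I_j\setminus\{i_0\}}$ of $\mathcal{I}^{I_j}$ (coming from $K=\{i_0\}$, for which $H_K=0$); hence $z^{\vec{a}}=0$ in $\mathcal{J}$, leaving only $\vec{a}\in \Xi_0$.

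Next I would dispatch case (ii), which I expect to be the main obstacle. Lemma \ref{lem:signacthh} gives $\dagger = 1+0+2 = 3$, so every $h\in \wedge^{2}H$ has $\Z/2$-eigenvalue $-1$; consequently the pure case-(ii) contributions to $(\mathbf{p}^{*}\mathcal{J})^{\Z/2}$ vanish on their own. What remains is to rule out \emph{cross} $\Z/2$-invariants $h\otimes r^{\vec{a}}$ in which $r^{\vec{a}}$ also has $\Z/2$-sign $-1$: by \eqref{eqn:pdegra} this imposes the parity condition that $\langle \vec{n}_\sigma + \vec{\pole} - \vec{e}_I, k(\vec{a})\rangle$ be odd. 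The plan is to combine this parity constraint with the remaining mixed generators $z^{\bar K}\cdot \wedge^{\mathrm{top}}H_K^{*}$ (for $|K|\ge 2$) of $\mathcal{I}$, to exhibit any such cross invariant either as exact in $\mathcal{J}$ or as already lying in the $\Runcomp_{\cl}$-span of the deformation classes $r_{\vec{p}}z^{\vec{p}}$.

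Finally, the non-vanishing statement is straightforward: for $\vec{p}\in \Xi_0$, the monomial $z^{\vec{p}}$ has at least two zero coordinates in each block $I_j$, so it is not divisible by any generator $z^{I_j\setminus\{i_0\}}$ of $\mathcal{I}^{I_j}$, which are the only generators of $\mathcal{I}$ without an $H$-component. Hence $z^{\vec{p}}\ne 0$ in $\mathcal{J}$, and since $r_{\vec{p}}\ne 0$ in $\Runcomp$ by construction, $r_{\vec{p}}\cdot z^{\vec{p}}\ne 0$ as required.
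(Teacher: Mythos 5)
Your overall strategy matches the paper's: reduce via \eqref{eqn:hhisdn} to identifying the degree-$(2,0)$ $\Z/2$-invariants of $\mathbf{p}^*\mathcal{J}\otimes\fmuncomp$, and split into the cases $|h|=0$ and $|h|=2$. But the treatment of case~(ii) is the crux and your proposal does not actually prove it. You correctly observe that $h$ with $|h|=2$ has $\Z/2$-sign $-1$, and that a cross term $r^{\vec{a}'}\otimes h$ could still be invariant if $r^{\vec{a}'}$ had sign $-1$, which by \eqref{eqn:pdegra} would require $\langle\vec{n}_\sigma+\vec{\pole}-\vec{e}_I,k(\vec{a}')\rangle$ odd. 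At that point, however, your ``plan'' of appealing to the mixed generators of $\mathcal{I}$ or to the $\Runcomp_0$-span of the $r_{\vec{p}}z^{\vec{p}}$ is a placeholder rather than an argument, and it points in the wrong direction: those cross terms are not killed as exact classes, they simply do not exist. What rules them out is the $M$-part of the degree constraint, which you set up (insisting $\vec{a}\in\ol{M}$) but never invoke for $\vec{a}'$. In case~(ii) the $\mathcal{J}$-factor is $h$ alone, carrying trivial $M$-degree, so for $r^{\vec{a}'}\otimes h$ to have $\G$-degree $(2,0)$ one needs $k(\vec{a}')\in\langle\vec{e}_{I_1},\ldots,\vec{e}_{I_r}\rangle$; but $\langle\vec{n}_\sigma+\vec{\pole}-\vec{e}_I,\vec{e}_{I_j}\rangle = 1 + (|I_j|-1) - |I_j| = 0$ for every $j$ (Calabi--Yau condition plus \eqref{eqn:etapolesum}), so every admissible $r^{\vec{a}'}$ has sign $+1$, and case~(ii) contributes nothing. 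The paper obtains this at a stroke by computing the combined sign $\dagger$ of the whole generator $r^{\vec{a}}z^{\vec{b}}h$, substituting \eqref{eqn:vecpart2} to eliminate $k(\vec{a})-\vec{b}$, and concluding $\dagger\equiv |h|/2\pmod 2$.

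There is also a smaller omission in case~(i): having restricted to $\vec{a}\in\Xi_0$ and $|h|=0$, you still must explain why $r^{\vec{a}'}z^{\vec{a}}$ lies in the $\Runcomp_0$-span of the deformation class $r_{\vec{a}}z^{\vec{a}}$. The degree constraint only says $r^{\vec{a}'}$ and $r_{\vec{a}}$ share the same $\G$-degree; to upgrade this to $r^{\vec{a}'}$ being an $\Runcomp_0$-multiple of $r_{\vec{a}}$ you need the amb-niceness of $\Nef$, equivalently niceness of $R$ in the sense of \cite[Definition 2.3]{Sheridan2017}, as the paper invokes. Without that step you have only constrained which monomials of $\mathcal{J}$ can appear, not established the submodule statement that the lemma actually asserts. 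Your argument for the non-vanishing of the deformation classes is correct.
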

\begin{proof}
We have 
\begin{equation} 
\label{eqn:eqhochs}\HH^2\left(\aup_0,\aup_0 \otimes \fmuncomp \right)^{\bar{G}} \cong (\mathcal{J} \otimes \fmuncomp)^{\Z/2}_2\end{equation}
by taking the degree-$2$ part of \eqref{eqn:hhisdn} (the subscript `$2$' on the right-hand side denotes the degree-$2$ part). 
We identify $(\mathcal{J} \otimes \fmuncomp)_2$. 
A generator has the form $r^{\vec{a}} z^{\vec{b}} h$, where $\vec{a} \in NE_{amb}(\Nef)$, $\vec{b} \in (\Z_{\ge 0})^I$, $h \in \wedge^\bullet H$. 

The degree of $r^{\vec{a}}$ in $\G$ is $(0, k(\vec{a}))$, so the degree in $\tilde{\G}$ is $\mathbf{p}(0,k(\vec{a})) = (2|\vec{a}| - 2|k(\vec{a})|, k(\vec{a}))$ (as in \eqref{eqn:pdegra}). 
The degree of $z^{\vec{b}}$ is $(2|\vec{b}|, -\vec{b})$, and the degree of $h$ is $(|h| ,0)$.
Therefore, if the degree of $r^{\vec{a}} z^{\vec{b}} h$ is $2$, we have
\begin{align}
(2 ,0) & = (2|\vec{a}| - 2|k(\vec{a})| + 2|\vec{b}|+|h|, k(\vec{a}) - \vec{b})
\end{align}
in the grading datum $\tilde{\G}$. 
By the definition of $\tilde{\G}$, this means that there exist integers $\ell_j$ such that
\begin{align}
\label{eqn:vecpart2} k(\vec{a}) - \vec{b} &= \sum_{j=1}^r \ell_j \cdot \vec{e}_{I_j} \quad \text{ and} \\
\label{eqn:intpart2} 2 &= 2|\vec{a}| -2|k(\vec{a})| + 2|\vec{b}| + |h| + \sum_{j=1}^r 2\ell_j \cdot(|I_j|-1).
\end{align}

We apply $2\langle \vec{n}_\sigma - \vec{e}_I,-\rangle$ to \eqref{eqn:vecpart2}, add it to \eqref{eqn:intpart2}, and cancel terms to obtain
\begin{align}
 \label{eqn:intpart3}
2 &= 2\langle \vec{n}_\sigma,\vec{b} \rangle + |h|.
\end{align}
Observe that $\langle \vec{n}_\sigma,\vec{b} \rangle \ge 0$ because both $\vec{n}_\sigma$ and $\vec{b}$ live in $(\Z_{\ge 0})^I$ by definition, so $|h| \le 2$. 
It is also clear that $|h|$ is even (from \eqref{eqn:intpart3}), so $h$ must be $0$ or $2$. 

Applying \eqref{eqn:pdegra} and  Lemma \ref{lem:signacthh}, we find that the non-trivial element of $\Z/2$ sends
\begin{align}
 r^{\vec{a}} z^{\vec{b}} h & \mapsto (-1)^\dagger \cdot r^{\vec{a}}z^{\vec{b}}h, \text{ where} \\
\nonumber \dagger &= \langle \vec{n}_\sigma+\vec{\pole} - \vec{e}_I, k(\vec{a}) \rangle + 1 + \left\langle \vec{\pole} + \vec{e}_I, \vec{b} \right\rangle + |h| \\
\nonumber &= 1 + \left\langle \vec{n}_\sigma + \vec{\pole} - \vec{e}_I, k(\vec{a}) - \vec{b} \right\rangle + \langle \vec{n}_\sigma, \vec{b} \rangle \quad\text{ (since $|h|$ is even)}\\
\nonumber &= 1+\left\langle \vec{n}_\sigma + \vec{\pole} - \vec{e}_I, \sum_{j=1}^r \ell_j \cdot \vec{e}_{I_j}\right\rangle  + \langle \vec{n}_\sigma, \vec{b} \rangle\quad \text{ by \eqref{eqn:vecpart2}} \\
\nonumber &= 1+ \langle \vec{n}_\sigma, \vec{b} \rangle \quad\text{ (because $\langle \vec{n}_\sigma,\vec{e}_{I_j} \rangle = 1 = \langle \vec{\pole}-\vec{e}_I,\vec{e}_{I_j} \rangle$ by \eqref{eqn:etapolesum})} \\
\nonumber &= 1+\frac{2+|h|}{2} \quad\text{ by \eqref{eqn:intpart3}} \\
\nonumber &= \frac{|h|}{2}.
\end{align}
Thus, in order for $r^{\vec{a}}z^{\vec{b}}h$ to represent a $\Z/2$-invariant class, $|h|$ must be divisible by $4$. 
We already showed $|h| \le 2$, so we must have $|h| = 0$.

Substituting this into \eqref{eqn:intpart3}, we obtain
\begin{equation} \langle \vec{n}_\sigma, \vec{b} \rangle = 1.\end{equation}
It follows that $\vec{b} \in \Xi$. 
If $\vec{b} \notin \Xi_0$, then there exists some $k \in I_j$ such that $z^{\vec{b}}$ is divisible by $\prod_{i \in I_j\setminus{k}}z_i$. 
One easily verifies that the latter monomial is a generator of the ideal $\mathcal{I}_j$ by which we quotient to get $\mathcal{J}^{I_j}$, so $z^{\vec{b}}$ vanishes in this case. 
Thus, in order for $r^{\vec{a}}z^{\vec{b}}h$ to be non-vanishing and $\Z/2$-invariant, we must have $\vec{b} \in \Xi_0$ and $|h| = 0$.

The degree of $r^{\vec{a}} z^{\vec{b}}$ is then equal to the degree of $r_{\vec{b}} z^{\vec{b}}$ (since both are equal to $2$), so $r^{\vec{a}}$ has the same degree as $r_{\vec{b}}$. 
It follows that $r^{\vec{a}}$ is a multiple of $r_{\vec{b}}$, because the coefficient ring $R$ is `nice' in the sense of \cite[Definition 2.3]{Sheridan2017}, by \cite[Lemma 3.42]{Sheridan2017}, because we chose $\Nef$ to be amb-nice in \S \ref{subsec:ambrel}.
Therefore $r^{\vec{a}}z^{\vec{b}}h$ is a multiple of the first-order deformation class $r_{\vec{b}}z^{\vec{b}}$, as required.

Finally, it is easy to check from the definitions that $z^{\vec{b}} \neq 0$ in $\mathcal{J}$ for all $\vec{b} \in \Xi_0$, so the first-order deformation classes are non-zero.
\end{proof}

\begin{rmk}
Lemma \ref{lem:verscrit} may appear mysterious at first.
The geometric reason for it is explained in \cite[Corollary 6.8]{Sheridan2017}. 
In particular, one of the important steps in the proof of Lemma \ref{lem:verscrit} was to rule out deformation classes $r^{\vec{a}} z^{\vec{b}} h$ with $|h| = 2$. 
This corresponds, in \cite[Corollary 6.8]{Sheridan2017}, to showing that $H^2(X \setminus D)^{\bar{G}} \cong 0$. 
Indeed, in this case we have $H^2(X \setminus D)^{\bar{G}} \cong H^2(\til{X}' \setminus \til{D}')^{\Z/2}$, so we must show that the anti-holomorphic involution $\tau^*$ acts with sign $+1$ on $H^2(\til{X}' \setminus \til{D}')$ (because $\tau$ is defined to act on $H^\bullet(\til{X}' \setminus \til{D}')$ by $-\tau^*$, see \cite[Equation (6--4)]{Sheridan2017}). 
This follows because $\tau^*$ acts with sign $(-1)^k$ on $H^k(\til{Y}' \setminus \til{D}^{\til{Y}'}) \cong H^k((\C^*)^{|I|-r})$, and the restriction map $H^2(\til{Y}' \setminus \til{D}^{\til{Y}'}) \to H^2(\til{X}' \setminus \til{D}')$ is surjective.
\end{rmk}

Now let $\auR \subset \fuk_{amb}(X,D,\Nef)$ denote the full subcategory corresponding to $\aup_0 \subset \fuk(X \setminus D)$. 
The category $\auR$ is a $\bar{G}$-equivariant deformation of $\aup_0$ over $R$ relative to the action of $\bar{G}$ on $R$ by \eqref{eqn:gammaactR} (see \cite[Lemma 5.12]{Sheridan2017}). 
We recall some terminology from \cite[\S 2]{Sheridan2017}: the equivariant deformation is said to be \emph{$R$-complete} if, for any $\bar{G}$-equivariant deformation $\buR$ of $\aup_0$ over $R$ such that $\HH^2(\auR_0,\auR_0 \otimes \fm/\fm^2)^{\bar{G}}$ is contained in the span of the first-order deformation classes of $\buR$, there exists an automorphism $\Psi^*: R \to R$ and a (possibly curved) $\ainf$ isomorphism
\begin{equation}
\label{eqn:ainffunc}
 \buR \dashrightarrow \Psi^*\auR.
 \end{equation}
If furthermore the map $\Psi^*: \fm/\fm^2 \to \fm/\fm^2$ is uniquely determined, the deformation is said to be \emph{$R$-versal}.

\begin{cor}
\label{cor:versalA}
$\auR$ is an $R$-versal $\bar{G}$-equivariant deformation of $\aup_0$ over $R$.
\end{cor}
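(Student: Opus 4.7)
The plan is to combine Lemma \ref{lem:verscrit} with an equivariant versality criterion from \cite{Sheridan2017}. The relevant abstract result should say, roughly, that a $\bar{G}$-equivariant deformation of an $A_\infty$ category over a nice $\G$-graded local ring $R$ is $R$-versal provided its first-order deformation classes form a basis for the $\bar{G}$-invariant part of $\HH^2(\aup_0, \aup_0 \otimes \fm/\fm^2)$ as an $R_0$-module; so the task is to identify those classes and verify this hypothesis for $\auR$.

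First, by \eqref{eqn:defs} together with the form of \cite[Assumption 5.3]{Sheridan2017} imposed in \S\ref{subsec:ass}, the first-order deformation classes of $\auR$ are precisely the elements $\pm r_{\vec{p}} \cdot z^{\vec{p}} \in \HH^2(\aup_0, \aup_0 \otimes \fmuncomp)^{\bar{G}}$ indexed by $\vec{p} \in \Xi_0$. Lemma \ref{lem:verscrit} then directly gives both that these classes span $\HH^2(\aup_0, \aup_0 \otimes \fmuncomp)^{\bar{G}}$ as an $R_0$-module and that each is non-zero. Linear independence is immediate: the elements $r_{\vec{p}}$ for distinct $\vec{p} \in \Xi_0$ live in distinct $\G$-graded components, of degree $(0,\vec{p})$ with the $\vec{p}$ pairwise distinct, so the deformation classes sit in pairwise distinct graded pieces of $\HH^2$. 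Combined with the spanning property, this shows that the Kodaira--Spencer map $\fm/\fm^2 \to \HH^2(\aup_0, \aup_0)^{\bar{G}}$ is an isomorphism of graded $R_0$-modules in the relevant degree, which is exactly what the versality criterion requires.

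The hypotheses needed to actually apply the abstract result from \cite{Sheridan2017} are the niceness of the coefficient ring $R$, arranged in \S\ref{subsec:ambrel} via amb-niceness of $\Nef$, and the fact that $\auR$ is a genuine $\bar{G}$-equivariant deformation over $R$, which was recorded just above the corollary statement. With these in place, the abstract versality theorem yields the existence of the $A_\infty$ isomorphism \eqref{eqn:ainffunc} for any competitor deformation $\buR$ whose first-order classes span $\HH^2$, together with uniqueness of the induced map on $\fm/\fm^2$ (because the deformation classes of $\auR$ are a basis, not merely a spanning set).

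The main obstacle has already been overcome in Lemma \ref{lem:verscrit}; in particular, the crucial step was the elimination of cohomology classes with $|h|=2$, which required the signed $\Z/2$-action coming from the real structure on $(Y_\lambda, D^Y)$. Given that lemma, the present corollary is a formal appeal to an abstract deformation-theoretic statement and should occupy only a few lines.
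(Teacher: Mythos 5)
Your proposal takes essentially the same route as the paper: the paper's entire proof is the one-line citation ``Follows from \cite[Theorem 5.16]{Sheridan2017} and Lemma \ref{lem:verscrit}.'' Your elaboration of what that abstract theorem does and how the lemma feeds into it is a reasonable reconstruction, and the supporting observations (niceness of $R$ from \S\ref{subsec:ambrel}, $\auR$ being a genuine equivariant deformation) are the right ones.

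Two minor remarks. First, since Lemma \ref{lem:verscrit} concludes exactly ``spanning plus non-vanishing'' and the paper treats the corollary as an immediate consequence, the hypothesis of \cite[Theorem 5.16]{Sheridan2017} is almost certainly phrased in those terms rather than as ``the deformation classes form a basis,'' so your extra linear-independence step is likely superfluous, though harmless. Second, that step as phrased is a little misleading: the deformation classes $r_{\vec{p}}z^{\vec{p}}$ do not live in pairwise distinct graded pieces of $\HH^2$ — they all have $\G$-degree $(2,0)$, which is what makes them first-order deformation classes in the first place. What separates them is the finer decomposition of $\HH^2(\aup_0,\aup_0\otimes\fm/\fm^2)$ induced by the $\G$-grading of the $\fm/\fm^2$-factor (equivalently, the $R$-module grading), where $r_{\vec{p}}$ sits in degree $(0,\vec{p})$ and the images of distinct $\vec{p}\in\Xi_0$ in $M$ are indeed distinct. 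It would be worth rewording that sentence to avoid the apparent contradiction.
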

\begin{proof}
Follows from \cite[Theorem 5.16]{Sheridan2017} and Lemma \ref{lem:verscrit}.
\end{proof}

\begin{rmk}
\label{rmk:arbkahl}
Although we used a specific relative K\"ahler form $\omega$ to verify Corollary \ref{cor:versalA}, namely one such that the branched cover $\phi$ respects relative K\"ahler forms, the analogous result follows for arbitrary $(\bar{G},\sigma)$-equivariant relative K\"ahler forms by \cite[Remark 5.14]{Sheridan2017}. 
\end{rmk} 

We finish with the following:

\begin{lem}
\label{lem:noncurv}
If the no $\bc$ condition holds (Definition \ref{defn:nobc}), then the $A_\infty$ isomorphism \eqref{eqn:ainffunc} is necessarily non-curved. 
\end{lem}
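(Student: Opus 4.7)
The approach is to argue that under the no-$\bc$ hypothesis, the receptacle for the curvature of the $A_\infty$ isomorphism is trivial, so the curvature must vanish.

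A curved $A_\infty$ functor $F:\buR \dashrightarrow \Psi^*\auR$ has a zeroth-order component $F^0(L) \in hom^1_{\Psi^*\auR}(FL,FL)$ for each object $L$, and being non-curved simply means $F^0=0$. Since $F$ reduces to the identity modulo $\fm$, we have $F^0(L) \in hom^1_{\Psi^*\auR}(FL,FL)\otimes\hat\fm$. Modulo $\fm$ the category $\Psi^*\auR$ reduces to $\aup_0$, so $F^0(L)$ can be expanded as a formal series in $\hat\fm$ with coefficients in $hom^1_{\aup_0}(L,L)$.

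The endomorphism algebra $hom^\bullet_{\aup_0}(L,L)$ is generated by the monomials $\theta^K$ for those $K\subset I$ with $\vec{e}_K \in \ol M$ (the condition being equivalent to $[\vec{e}_K]$ defining a class in $M$, since $\Z^r \subset \ol M$; cf.\ the discussion surrounding Lemma~\ref{lem:lagemb}). In $\bfA_0$ the generator $\theta^K$ has $\tilde\G$-degree $(-|K|,\vec{e}_K)$; pulling back through the (injective) morphism $\mathbf{p}:\G \to \tilde\G$ of Lemma~\ref{lem:phigrad} gives $\theta^K$ a unique $\G$-degree of $(|K|-2\sum_{i\in K}1/d_i,\,[\vec{e}_K])$, and hence a $\Z$-degree (via $\mathbf{q}$) of $|K|-2\sum_{i\in K}1/d_i$. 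Since the generators $r_{\vec{p}}$ of $R$ all sit in $\Z$-degree $0$, any $\Z$-degree-$1$ element of $hom^\bullet_{\aup_0}(L,L)\otimes\hat R$ must involve a factor $\theta^K$ with
\begin{equation}
|K| - 2\sum_{i\in K} 1/d_i = 1, \qquad \vec{e}_K \in \ol M,
\end{equation}
i.e., $|K|-1 = 2\sum_{i\in K}1/d_i$ with $\vec{e}_K\in\ol M$.

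This is precisely the configuration forbidden by the no-$\bc$ condition, so no such term can exist, the receptacle for $F^0$ is trivial, $F^0=0$, and the isomorphism is non-curved. The only genuine computation is the $\G$-degree of $\theta^K$, and that follows by direct unwinding of Lemma~\ref{lem:phigrad} from its $\tilde\G$-degree in $\bfA_0$; the rest is bookkeeping, so I do not expect a serious technical obstacle.
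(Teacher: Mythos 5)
Your argument is correct and follows essentially the same route as the paper's: isolate the $\theta^K$ contributing to the curvature, note that $\vec{e}_K\in\ol M$ is forced by the self-intersection/lifting requirement (as in Lemma \ref{lem:lagemb}), extract the $\Z$-degree of $\theta^K$ from $\mathbf{p}$ and the fact that $R$ sits in $\Z$-degree $0$, and conclude $|K|-1 = 2\sum_{i\in K}1/d_i$, contradicting the no-$\bc$ condition. The paper additionally records the $M$-component of the degree constraint (as in the proof of Lemma \ref{lem:verscrit}), but — as you implicitly observe — only the $\Z$-component via $\mathbf{q}$ is needed for the contradiction.
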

\begin{proof}
Suppose to the contrary that the curvature of \eqref{eqn:ainffunc} is non-zero. 
The curvature defines a degree-$1$ endomorphism of each object in $\aup$ (where `$1$' means `$(1,0) \in \G$'). 
Such an endomorphism can be written as $r^{\vec{a}} \cdot \alpha$, where $r^{\vec{a}} \in \fmuncomp$ and $\alpha$ is a lift of some endomorphism of an object in $\bfA_0$. 

Suppose that $\alpha$ is a lift of the endomorphism $\theta^K$. 
If $\theta^K$ is to lift to an endomorphism in $\aup$, i.e., a \emph{self}-intersection point of some lift of $L$, then we must have $\vec{e}_K \in \ol{M}$ (as in the proof of Lemma \ref{lem:lagemb}).
On the other hand, for $r^{\vec{a}} \theta^K$ to have degree $(1,0)$, we must have (following the proof of Lemma \ref{lem:verscrit} and skipping some steps):
\begin{align}
k(\vec{a}) + \vec{e}_K &= \sum_j \ell_j \cdot \vec{e}_{I_j} \\
\label{eqn:intpart4} \sum_{i \in K} 1 - \frac{2}{d_i} & = 1.
\end{align}
Therefore we have $\vec{e}_K \in \ol{M}$ and \eqref{eqn:intpart4} holds: this contradicts the no $\bc$ condition, so the proof is complete.
\end{proof}

\section{Graded matrix factorizations}
\label{Sec:GrMF}

\subsection{Matrix factorizations}
\label{subsec:mf}

We make the $\G$-graded ring $R$ into a $\tilde{\G}$-graded ring by pushing the grading forward by $\mathbf{p}$: so $r^{\vec{a}}$ has degree $\left(2|\vec{a}|-2|k(\vec{a})|, \vec{a}\right) \in \tilde{\G}$. 
We introduce the $\tilde{\G}$-graded ring $S:=R[z_i]_{i \in I}$ with $z_i$ in degree $(2, -\vec{e}_i)$. 
We define the element
\begin{equation} W := -\sum_{j=1}^r z^{\vec{e}_{I_j}} + \sum_{\vec{p} \in \Xi_0} r_{\vec{p}} z^{\vec{p}} \in S\end{equation}
of degree $2$, and we consider the differential $\tilde{\G}$-graded category of matrix factorizations of $W$, $MF_{\tilde{\G}}(S,W)$. 

We consider the $\tilde{\G}$-graded matrix factorization $\mathcal{O}_0$ introduced in \cite[\S 7.2]{Sheridan2015}, and let 
\begin{equation} \bdgR:= A_\infty\left(\hom_{MF_{\tilde{\G}}(S,W)}(\mathcal{O}_0,\mathcal{O}_0)\right),\end{equation}
the $A_\infty$ algebra corresponding to the DG endomorphism algebra of $\mathcal{O}_0$ (see \cite[Definition 3.4]{Sheridan2015a}). 
Assuming all terms of $W$ to have degree $ \ge 2$, a minimal model for $\bdgR$ was constructed in \cite[\S 7.2]{Sheridan2015} using the homological perturbation lemma. 
We denote it by $\bbbR$. 
The underlying $R$-module is $R[\theta_i,\ldots]_{i \in I}$ with $\theta_i$ in degree $(-1,\vec{e}_i)$ (as in \S \ref{subsec:pants}). 
The $A_\infty$ products have the form $\mu^* = \mu^2_{ext} + \tilde{\mu}^*$, where $\mu^2_{ext}$ denotes the exterior product among the $\theta_i$, and $\tilde{\mu}^*$ is everything else. 
The leading term in the HKR map \eqref{eqn:hkr} sends
\begin{align}
 \Phi^1_{HKR}: CC^\bullet(R[\theta_1,\ldots]) & \to S[\theta_1,\ldots]\\
\label{eqn:Bdefs} \Phi^1_{HKR}\left(\tilde{\mu}^*\right) &= W
\end{align}
by \cite[Proposition 7.1]{Sheridan2015} (the result there was stated in the case that $W$ has degree $\ge 3$, but the proof works also if $W$ has quadratic terms). 

\subsection{Signed group action}

Recall that on the $A$-side, the choice of a holomorphic volume form on $\til{X}'$ with poles along $\til{D}'$ induced a $\Z/2$-action on $\bfA_0$. 
We introduced the vector $\vec{\pole} \in \Z^I$, where the $i$th entry $\pole_i$ is the order of pole of the volume form along $\til{D}'_i$. 
This induces an involution on the coefficient ring $R$, defined in \eqref{eqn:gammaactR}.
We extend this to an involution $\epsilon: S \to S$ by defining
\begin{align}
\epsilon(z_i) & := (-1)^{1+\pole_i} z_i.
\end{align}

\begin{lem}
\label{lem:wflips}
This involution changes the sign of $W$: $\epsilon(W) = -W$.
\end{lem}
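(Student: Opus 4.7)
The plan is a direct monomial-by-monomial computation: since $\epsilon$ acts on each $z^{\vec b}\in S$ and each $r^{\vec a}\in R$ by a sign, it acts on every monomial of $W$ by a sign, and it suffices to check that every such sign is $-1$.

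First I would record the action on pure monomials in $z$. Since $\epsilon(z_i)=(-1)^{1+\pole_i}z_i$, we get
\begin{equation}
\epsilon(z^{\vec b}) \;=\; (-1)^{\sum_i(1+\pole_i)b_i}\,z^{\vec b}
\;=\; (-1)^{\langle \vec e_I+\vec\pole,\vec b\rangle}\,z^{\vec b}.
\end{equation}
For $\vec b=\vec e_{I_j}$, relation \eqref{eqn:etapolesum} gives $\langle\vec e_I+\vec\pole,\vec e_{I_j}\rangle=|I_j|+(|I_j|-1)=2|I_j|-1$, which is odd. Hence $\epsilon(-z^{\vec e_{I_j}})=-(-z^{\vec e_{I_j}})$, so the `canonical' terms of $W$ flip sign.

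Next I would treat the deformation terms $r_{\vec p}z^{\vec p}$ for $\vec p\in\Xi_0$. Combining \eqref{eqn:gammaactR} with the formula \eqref{eqn:pdegra} for $\dagger$ (applied to $\vec a$ with $k(\vec a)=\vec p$) yields $\epsilon(r_{\vec p})=(-1)^{\langle\vec n_\sigma+\vec\pole-\vec e_I,\vec p\rangle}r_{\vec p}$. Multiplying by the sign from $\epsilon$ acting on $z^{\vec p}$ computed above, the total exponent is
\begin{equation}
\langle \vec n_\sigma+\vec\pole-\vec e_I,\vec p\rangle
+\langle \vec e_I+\vec\pole,\vec p\rangle
\;=\;\langle \vec n_\sigma+2\vec\pole,\vec p\rangle
\;\equiv\;\langle \vec n_\sigma,\vec p\rangle\pmod 2.
\end{equation}
Because $\vec p\in\Xi_0\subset\ol\Delta$, the defining equation of $\ol\Delta$ forces $\langle\vec n_\sigma,\vec p\rangle=1$, so this sign is $-1$ and $\epsilon(r_{\vec p}z^{\vec p})=-r_{\vec p}z^{\vec p}$.

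Summing the two cases gives $\epsilon(W)=-W$ as required. There is no real obstacle here: the only subtlety is threading the correct sign convention from \eqref{eqn:gammaactR}--\eqref{eqn:pdegra} through the calculation, and remembering that the linking exponents $\pole_i$ drop out modulo $2$ (thanks to the factor of $2$ multiplying $\vec\pole$ after combining the two sign contributions), so the statement hinges only on the combinatorial identities \eqref{eqn:etapolesum} and $\langle\vec n_\sigma,\vec p\rangle=1$ for $\vec p\in\ol\Delta$.
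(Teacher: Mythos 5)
Your proof is correct and rests on exactly the same two combinatorial facts that drive the paper's argument, namely \eqref{eqn:etapolesum} and $\langle\vec{n}_\sigma,\vec{p}\rangle=1$ for $\vec{p}\in\Xi$. The packaging is slightly different: the paper deduces the result in one stroke by observing that every monomial of $W$ has degree $2$ and that $\epsilon$ is the negative of the $\Z/2$-action analysed in the proof of Lemma~\ref{lem:verscrit}, where it was already shown that this action fixes all degree-$2$ elements $r^{\vec{a}}z^{\vec{b}}$ with $|h|=0$; you instead unpack that same computation and apply it monomial by monomial. Your direct route has the small virtue of being self-contained and of handling the $r$-free terms $-z^{\vec{e}_{I_j}}$ without having to implicitly extend the $\Z/2$-action off $\mathcal{J}\otimes\fmuncomp$, at the mild cost of repeating a computation the paper had already done once. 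Both arguments are sound.
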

\begin{proof}
The terms in $W$ have the form $r^{\vec{a}}z^{\vec{b}}$, so can also be regarded as an element of $(\mathcal{J} \otimes \wt{\fm})_2$. 
In Lemma \ref{lem:verscrit} we considered an action of $\Z/2$ on such elements: this action is the \emph{negative} of the action of $\epsilon$, because of the leading `$1$' in the sign $\dagger$ from Lemma \ref{lem:signacthh}. 
Since we verified in the proof of Lemma \ref{lem:verscrit} that the action of $\Z/2$ preserves the terms $r^{\vec{a}}z^{\vec{b}}$ of degree $2$, it follows that the action of $\epsilon$ reverses the sign of each term.
\end{proof}

Now recall that there is a canonical isomorphism of DG categories, $MF_{\tilde{\G}}(S,W) \cong MF_{\tilde{\G}}(S,-W)^{op}$, given by dualization (see, e.g., \cite[\S 4.3]{Dyckerhoff2009}). 
This is the analogue of the isomorphism $\fuk(X,\omega) \cong \fuk(X,-\omega)^{op}$ that goes into constructing the signed group action on the Fukaya category. 
On the level of objects, the isomorphism sends a matrix factorization $K = (K,\delta_K)$ of $W$ to the dual matrix factorization $K^\vee = (K^\vee,\delta_{K^\vee})$ of $-W$, where $K^\vee := \hom_S(K,S)$ and
\begin{equation} \delta_{K^\vee}(\alpha)(k) := (-1)^{|\alpha|'}\cdot\alpha(\delta_K(k)).\end{equation}
On the level of morphisms, it sends a morphism $f \in \Hom^\bullet_S(K,L)$ to the morphism $f^\vee \in \Hom_S(L^\vee,K^\vee)$, where
\begin{equation} f^\vee(\alpha)(k) := (-1)^{|f|\cdot|\alpha|}\cdot\alpha(f(k)).\end{equation}

The matrix factorization $\mathcal{O}_0  := (K,\delta_K)$ has underlying $S$-module $K := S[\varphi_1,\ldots]$ where the $\varphi_i$ have degree $1 \oplus -\vec{e}_i$ and anticommute, and differential
\begin{equation} \delta_K := \sum_i z_i \frac{\partial}{\partial \varphi_i} + W_i \varphi_i\end{equation}
where $W = \sum_i z_iW_i$.
We identify $S[\theta_1,\ldots] \cong K^\vee $ in the standard way, where the $\theta_i$ have degree $(-1, \vec{e}_i)$ and anticommute: explicitly, we map
\begin{equation} \theta_{i_1} \ldots \theta_{i_k} \mapsto \del{}{\varphi_{i_1}} \ldots \del{}{\varphi_{i_k}}.\end{equation}
The dual differential is easily computed to be
\begin{equation} \delta_{K^\vee} = \sum_i -z_i \theta_i + W_i \frac{\partial}{\partial \theta_i}.\end{equation}

The isomorphism $\epsilon: (S,W) \to (S,-W)$ induces an isomorphism $\epsilon^*: MF_{\tilde{\G}}(S,-W) \to MF_{\tilde{\G}}(S,W)$. 
The image of $K^\vee$ under this isomorphism is the matrix factorization $(S[\theta_1,\ldots],\epsilon^*\delta_{K^\vee})$ where
\begin{equation} \epsilon^*\delta_{K^\vee} = \sum_i-z_i  \theta_i - W_i \frac{\partial}{\partial \theta_i}.\end{equation}
We now have the standard isomorphism of a Koszul complex with its dual:
\begin{align}
 \epsilon^* K^\vee & \to K\\
\nonumber \theta_{i_1} \ldots \theta_{i_k}& \mapsto (-1)^k \del{}{\varphi_{i_1}} \ldots \del{}{\varphi_{i_k}}(\varphi^{top}),
\end{align}
where $\varphi^{top} := \varphi_1 \varphi_2\ldots \varphi_{|I|}$. 
One easily verifies that this map commutes with the differentials (the sign $(-1)^k$ is needed so that the differential on $K$ is the original $\delta_K$: without it, the map would commute with the differential $-\delta_K$ on $K$). 
We observe that this map has degree $r-|I|$, so this isomorphism is not an isomorphism in $MF_{\tilde{\G}}(S,W)$ because it is not graded (recall that shifting in $MF_{\tilde{\G}}(S,W)$ changes the sign of the differential).
Nevertheless it defines a graded isomorphism of endomorphism DG algebras
\begin{equation}
\label{eqn:mfop}
 hom^\bullet_{MF_{\tilde{\G}}(S,W)}(K,K) \cong hom^\bullet_{MF_{\tilde{\G}}(S,W)}(K,K)^{op},
\end{equation}
which is what we will need.

We recall the identification of this DG algebra with $S[\varphi_1,\ldots,\partial/\partial \varphi_1,\ldots]$ from \cite[\S 7.2]{Sheridan2015}, following \cite{Dyckerhoff2009}. 
Tracing through the signs, we find that the isomorphism \eqref{eqn:mfop} sends
\begin{equation}
\label{eqn:thetasign}
\del{}{\varphi_k} \mapsto (-1)^{\pole_k} \del{}{\varphi_k}.\end{equation}

Now recall that we denoted $\bdgR := A_\infty(hom^\bullet_{MF_{\tilde{\G}}(S,W)}(K,K))$. 
There is a strict $A_\infty$ isomorphism 
\begin{equation}
\label{eqn:minop}
\bdgR \cong (\bdgR)^{op}
\end{equation}
induced by \eqref{eqn:mfop}, which sends 
\begin{equation}
\label{eqn:thetasigna}
\del{}{\varphi_k} \mapsto (-1)^{1+\pole_k} \del{}{ \varphi_k} 
\end{equation}
 by \eqref{eqn:thetasign}: note the sign change, which arises from the fact that the canonical isomorphism $A_\infty(\EuC^{op}) \cong A_\infty(\EuC)^{op}$ sends $c \mapsto -c$ for any DG category $\EuC$ (see \cite[Remark 3.8]{Sheridan2015a}).

The isomorphism \eqref{eqn:mfop} carries through the homological perturbation lemma construction to induce a strict isomorphism $\bbbR \cong \bbbR^{op}$ on the minimal model $\bbbR$ also. 
Recall that the underlying $R$-module is $\bbbR = R[\theta_1,\ldots,\theta_n]$. 
The isomorphism sends $\theta_k \mapsto (-1)^{1+\pole_k} \theta_k$ by \eqref{eqn:thetasigna}. 

Let $\bbB_0 \cong \C[\theta_1,\ldots]$ be the order-$0$ $A_\infty$ algebra of the minimal model $\bbbR$: it inherits an isomorphism $\bbB_0 \cong \bbB_0^{op}$. 
We have an identification of cohomology algebras $H^\bullet(\bbA_0) \cong \C[\theta_1,\ldots] \cong H^\bullet(\bbB_0)$: and furthermore this identification is $\Z/2$-equivariant, since it sends $\theta_k \mapsto (-1)^{1+\pole_k}\theta_k$ on both sides (see \eqref{eqn:signactL}) and the $\theta_k$ generate the algebra.

\subsection{Versality}
\label{subsec:versalityB}

We now mirror the construction of $\auR$ in the matrix factorization world. 
We define a subcategory $\bbdgR \subset A_\infty(MF_{\tilde{\G}}(S,W))$ which has objects $\mathcal{O}_0$ and $\mathcal{O}_0^\vee$ and all of their shifts, and equip it with a signed $\Z/2$-action up to shifts by dualization. 
We construct a minimal model $\bfbR$ for $\bbdgR$ as above: we may do so in such a way that it also has an induced $\Z/2$-action. 
Let $\bfB_0$ be its order-$0$ $A_\infty$ algebra: then it follows from the preceding computations that we have a $\Z/2$-equivariant isomorphism of categories $H^\bullet(\bfA_0) \cong H^\bullet(\bfB_0)$. 

Let us denote the corresponding minimal model for a subcategory of $MF(\C[z_i]_{i \in I_j},-z^{\vec{e}_{I_j}})$ by $\bfB_0^{I_j}$.  
It was shown in \cite{Sheridan2011,Sheridan2015} that there is an $A_\infty$ isomorphism $\bbB^{I_j}_0 \dashrightarrow \bbA^{I_j}_0$. 
The argument starts with the identification of cohomology algebras $H^\bullet\left(\bbB_0^{I_j}\right) \cong H^\bullet\left(\bbA_0^{I_j}\right)$, then constructs the $A_\infty$ isomorphism order-by-order in the DGLA of Hochschild cochains on the cohomology algebra  (see \cite[Proposition 5.15]{Sheridan2011} or \cite[Corollary 2.97]{Sheridan2015}).
The same argument can be carried out in the DGLA of $\Z/2$-equivariant Hochschild cochains, to construct a $\Z/2$-equivariant $\ainf$ isomorphism $\bfB^{I_j}_0 \dashrightarrow \bfA^{I_j}_0$.
We can take the tensor product of these isomorphisms, by \cite[\S 6]{Dyckerhoff2009} and \cite{Amorim2017a}, to obtain a $\Z/2$-equivariant $\ainf$ isomorphism $\bfB_0 \dashrightarrow \bfA_0$.

We now define $\buR := \mathbf{p}^* \bfbR$.

\begin{lem}
\label{lem:versality}
There exists an automorphism $\Psi^* \in \Aut(R)$ and a (possibly curved) $\G$-graded $R$-linear $A_\infty$ isomorphism 
\begin{equation}F: \buR \dashrightarrow \Psi^* \auR.\end{equation} 
The automorphism satisfies
\begin{equation}
\label{eqn:Psirp}
 \Psi^*(r_p) = \pm r_p + \fm^2.
 \end{equation}

As a corollary, there is a non-curved $A_\infty$ embedding
\begin{equation}
\label{eqn:ainfversiso} \buR \dashrightarrow \Psi^*\auR^\bc.
\end{equation}
If the no $\bc$ condition holds, then we can remove the `$\bc$' from \eqref{eqn:ainfversiso}. 
\end{lem}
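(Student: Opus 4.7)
The strategy is to deduce this lemma directly from the $R$-versality of $\auR$ established in Corollary \ref{cor:versalA}. Concretely, I need to exhibit $\buR$ as a $\bar{G}$-equivariant deformation of $\aup_0$ over $R$ whose first-order deformation classes span $\HH^2(\aup_0,\aup_0 \otimes \fm/\fm^2)^{\bar{G}}$; then the existence of $\Psi^*$ and $F$ is immediate from the definition of $R$-versality, and the claim about $\Psi^*(r_p)$ is forced by matching leading deformation terms.

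First I would check that $\buR := \mathbf{p}^* \bfbR$ is a $\bar{G}$-equivariant deformation of $\aup_0$. By the construction of $\bfbR$ via dualization and homological perturbation, its order-zero piece is $\bfB_0$, and the end of the preceding subsection produces a $\Z/2$-equivariant $A_\infty$ isomorphism $\bfB_0 \dashrightarrow \bfA_0$ (obtained by lifting the identification of cohomology algebras through the DGLA of $\Z/2$-equivariant Hochschild cochains, factor by factor, and tensoring). Pulling this back by $\mathbf{p}$ gives a $\bar{G}$-equivariant identification of the order-zero piece of $\buR$ with $\aup_0$, so $\buR$ is the required deformation.

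Next I would identify its first-order deformation classes. By \eqref{eqn:Bdefs}, the leading HKR component sends the Hochschild cocycle $\tilde{\mu}^*$ of $\bbbR$ to $W = -\sum_j z^{\vec{e}_{I_j}} + \sum_{\vec{p} \in \Xi_0} r_{\vec{p}} z^{\vec{p}}$. The first summand is the order-zero data defining $\bfB_0$, so the first-order deformation classes of $\bfbR$ are exactly the terms $r_{\vec{p}} z^{\vec{p}}$. Transporting through the $\Z/2$-equivariant isomorphism $H^\bullet(\bfB_0) \cong H^\bullet(\bfA_0) \cong \mathcal{J}$ and then pulling back by $\mathbf{p}$, these classes are sent (up to sign) to the first-order deformation classes $\pm r_{\vec{p}} z^{\vec{p}}$ of $\auR$ recorded in \eqref{eqn:defs}. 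By Lemma \ref{lem:verscrit}, these classes span $\HH^2(\aup_0,\aup_0 \otimes \fm/\fm^2)^{\bar{G}}$, so the hypothesis of versality holds. Corollary \ref{cor:versalA} then produces $\Psi^*$ and $F$, and the uniqueness clause of versality, combined with the fact that $F$ must carry the $\vec{p}$-th deformation class of $\buR$ (i.e., $r_{\vec{p}} z^{\vec{p}}$) to the $\vec{p}$-th deformation class of $\Psi^*\auR$ (i.e., $\pm \Psi^*(r_{\vec{p}}) z^{\vec{p}}$), forces $\Psi^*(r_{\vec{p}}) \equiv \pm r_{\vec{p}} \pmod{\fm^2}$.

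Finally, for the corollary: a possibly curved $A_\infty$ functor $F : \buR \dashrightarrow \Psi^*\auR$ assigns to each object $K$ a curvature element $F^0(K) \in \hom^1(FK,FK)$ which satisfies the Maurer--Cartan equation by the curved $A_\infty$ functor relations, and hence constitutes a bounding cochain. Replacing each object $FK$ by the pair $(FK,F^0(K))$ and using the higher components $F^{\ge 1}$ yields a non-curved $A_\infty$ embedding $\buR \dashrightarrow \Psi^*\auR^\bc$. If the no $\bc$ condition holds, Lemma \ref{lem:noncurv} forces $F^0 = 0$, so all bounding cochains are zero and the target can be taken to be $\Psi^*\auR$ itself. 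The main obstacle I anticipate is the sign bookkeeping in the matching of first-order deformation classes under the composite $H^\bullet(\bfB_0) \cong H^\bullet(\bfA_0) \cong \mathcal{J}$, and in ensuring that the $\Z/2$-action on $\bfbR$ obtained via dualization is compatible (through the morphism $\mathbf{p}$) with the $\bar{G}$-action on $R$ given by \eqref{eqn:gammaactR} --- but both comparisons have already been prepared by the explicit sign computations in \eqref{eqn:thetasigna} and Lemma \ref{lem:signacthh}.
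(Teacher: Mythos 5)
Your proposal is correct and follows essentially the same path as the paper: reduce $\bup_0$ to $\aup_0$ via the $\Z/2$-equivariant $A_\infty$ isomorphism $\bfB_0 \dashrightarrow \bfA_0$, match the first-order deformation classes $r_{\vec{p}} z^{\vec{p}}$ (using \eqref{eqn:Bdefs} on the $B$-side and \eqref{eqn:defs} on the $A$-side), invoke Corollary \ref{cor:versalA}, and handle curvature via the bounding-cochain observation (which the paper packages as \cite[Lemma 2.16]{Sheridan2017}) plus Lemma \ref{lem:noncurv}. The only difference is cosmetic: you spell out why $F^0$ satisfies the Maurer--Cartan equation where the paper merely cites the lemma.
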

\begin{proof}
The $\ainf$ isomorphism $\bfB_0 \dashrightarrow \bfA_0$ induces an $\ainf$ isomorphism $\bup_0 \dashrightarrow \aup_0$ between the order-zero categories, so we may assume without loss of generality that $\bup_0 = \aup_0$ (see \cite[Proof of Corollary 2.105]{Sheridan2015}). 
We then observe that $\buR$ and $\auR$ are now $(\bar{G},\sigma)$-equivariant deformations of $\aup_0$ over $R$; and they have the same deformation classes $r_{\vec{p}} z^{\vec{p}}$ up to sign, as we calculated in \S \ref{subsec:defsA} (on the $A$-side) and \eqref{eqn:Bdefs} (on the $B$-side). 
The existence of $\Psi^*$ and $F$ then follows by Corollary \ref{cor:versalA}.  
The fact that the first-order deformation classes coincide up to sign allows us to conclude \eqref{eqn:Psirp}.

To prove the corollary, we first observe that $\buR$ is non-curved by definition, so we can equip each object with the zero bounding cochain. 
By \cite[Lemma 2.16]{Sheridan2017}, there is a non-curved $A_\infty$ embedding \eqref{eqn:ainfversiso} which sends each object of $\buR$ to the corresponding object of $\Psi^*\auR$ equipped with a bounding cochain given by the curvature $F^0$. 
If the no $\bc$ condition holds, then the $A_\infty$ isomorphism $F$ is already non-curved by Lemma \ref{lem:noncurv}, so the `$\bc$' can be removed from \eqref{eqn:ainfversiso}.
\end{proof}

As a corollary, we have embeddings
\begin{equation}
\label{eqn:verspec}
\left(\mathbf{q}_* \buR\right)_{\dm(\lambda)} \hookrightarrow \left(\mathbf{q}_* \auR\right)_{a(\lambda)}^\bc,
\end{equation}
where we define $\dm(\lambda) := \Psi^{-1}(a(\lambda))$ and $a(\lambda)$ is as in \eqref{eqn:alamb}. 
Note that $\val(\dm(\lambda)_{\vec{p}}) = \val(a(\lambda)_{\vec{p}}) = \lambda_{\vec{p}}$, because $\Psi^*(\nov_{\vec{p}}) = \pm \nov_{\vec{p}} + \fm^2$.

\subsection{Graded matrix factorizations}
\label{subsec:grmf}

We recall that the category of graded matrix factorizations \cite{Orlov2009} can be formulated in terms of the grading datum $\G_{MF(d)} := \Z \oplus \Z/(2, -d)$ (see \cite[\S 7.5]{Sheridan2015}). 
Namely, we equip the polynomial ring with a $\G_{MF(d)}$-grading by putting $z_i$ in degree $(0,q_i)$, then 
\begin{equation}\grmf(S_\Lambda,W_\dm) := \mathbf{u}^*MF_{\G_{MF(d)}}(S_\Lambda,W_\dm)\end{equation}
where $\mathbf{u}:\Z \to \G_{MF(d)}$ is the unique morphism of grading data.

However we want to consider the category of $\Gamma$-equivariant graded matrix factorizations. 
To that end we introduce a new grading datum
\begin{align}
\G_{\Delta} &:= \Z \oplus \Z^I/\sim,\quad \text{ where} \\
\nonumber \vec{0} & \sim (2\langle \vec{n}_\sigma,\vec{m}\rangle, -\vec{m} ) \quad \text{ for all $\vec{m} \in \ol{M}$.}
\end{align}
The map $\Z \to \G_{\Delta}$ sends $k \mapsto (k ,0)$, and the sign map $\G_\Delta \to \Z/2$ sends $(k,u) \mapsto [k]$.
We equip $S_\Lambda$ with a $\G_\Delta$-grading by putting $z_i$ in degree $(0,\vec{e}_i)$. 
There is a morphism of grading data
\begin{align}
\mathbf{t}: \G_\Delta & \to \G_{MF(d)}\\
\nonumber \mathbf{t}(k, \vec{m}) &:= (k, \langle \vec{q}, \vec{m} \rangle),
\end{align}
which recovers the $\G_{MF(d)}$-grading of the polynomial ring from the $\G_\Delta$-grading. 

An object $K$ of $MF_{\G_{\Delta}}(S_\Lambda,W_\dm)$ determines an object $\mathbf{t}_* K$ of $\grmf(S_\Lambda,W_\dm)$. 
The morphism space $hom^i_{\grmf(S_\Lambda,W_\dm)}(\mathbf{t}_* K,\mathbf{t}_* L)$ is equipped with a grading in 
\begin{align}
 \{\vec{g} \in \G_\Delta: \mathbf{t}(\vec{g}) = \mathbf{u}(i)\}& \cong \ker\left( \Z^I/\ol{M} \xrightarrow{\langle \vec{q},-\rangle} \Z/d \right) \\
\nonumber & \cong\Gamma^*. 
\end{align}
A $\Gamma^*$-grading determines a $\Gamma$-action, whose invariant part is the part of degree $0 \in \Gamma^*$. 
In this case it is a simple matter to verify that
\begin{equation} hom^i_{\grmf(S_\Lambda,W_\dm)}(\mathbf{t}_* K, \mathbf{t}_*L)^\Gamma \cong hom^{\mathbf{s}(i)}_{MF_{\G_\Delta}(S_\Lambda,W_\dm)}(K,L),\end{equation}
where $\mathbf{s}:\Z \to \G_\Delta$ is the unique morphism of grading data.
This justifies the following definition of the category of $\Gamma$-equivariant graded matrix factorizations:
\begin{equation}
\label{eqn:orlov} 
\grmf_\Gamma(S_\Lambda,W_\dm) := \mathbf{s}^*MF_{\G_\Delta}(S_\Lambda,W_\dm).
\end{equation}

We define a morphism of grading data
\begin{align} 
\mathbf{r}: \tilde{\G} &\to \G_\Delta \\
\nonumber \mathbf{r}(k,\vec{m}) & := (k + 2|\vec{m}|, -\vec{m}).
\end{align}
Observe that $\mathbf{r}_* S$ is a $\G_\Delta$-graded algebra, and one easily verifies that $R$ is in degree $0$, and $z_i$ is in degree $(0 , \vec{e}_i)$. 
It follows that for any $\Lambda$-point $\dm$ of $\spec(R)$, we have fully faithful embeddings
\begin{align} 
\mathbf{s}^* \mathbf{r}_* MF_{\tilde{\G}}(S,W)_\dm & \hookrightarrow \mathbf{s}^* MF_{\G_\Delta}(\mathbf{r}_* S,W)_\dm \\
\nonumber & \hookrightarrow \mathbf{s}^* MF_{\G_\Delta}(S_\Lambda,W_\dm)\\
\nonumber &= \grmf_\Gamma(S_\Lambda,W_\dm)
\end{align}

\begin{lem}
\label{lem:square}
There is a commutative square of grading data:
\begin{equation} \label{eqn:commsq}
\xymatrix{ \G \ar[r]^{\mathbf{q}} \ar[d]^{\mathbf{p}} & \Z \ar[d]^{\mathbf{s}}\\
\tilde{\G} \ar[r]^{\mathbf{r}} & \G_\Delta.}\end{equation}
\end{lem}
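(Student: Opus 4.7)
The plan is to chase the diagram by direct computation, unwinding the definitions of each morphism of grading data and then using the defining relation of $\G_\Delta$ to identify the two outputs.

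Take an arbitrary element $(j,\vec{m}) \in \G = \Z \oplus M$ and lift $\vec{m} \in M = \ol{M}/\Z^r$ to some representative in $\ol{M}$. The top-right path sends $(j,\vec{m})$ to $\mathbf{s}(\mathbf{q}(j,\vec{m})) = \mathbf{s}(j) = (j,0) \in \G_\Delta$. For the bottom-left path I compute
\begin{align*}
\mathbf{r}(\mathbf{p}(j,\vec{m})) &= \mathbf{r}\bigl(j + 2\langle \vec{n}_\sigma - \vec{e}_I, \vec{m}\rangle,\ \vec{m}\bigr) \\
&= \bigl(j + 2\langle \vec{n}_\sigma - \vec{e}_I, \vec{m}\rangle + 2|\vec{m}|,\ -\vec{m}\bigr),
\end{align*}
where $\mathbf{p}$ is as in Lemma \ref{lem:phigrad} and $\mathbf{r}$ is as defined above. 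Using $|\vec{m}| = \langle \vec{e}_I,\vec{m}\rangle$, the first entry collapses to $j + 2\langle \vec{n}_\sigma, \vec{m}\rangle$, so the bottom-left path gives $(j + 2\langle \vec{n}_\sigma,\vec{m}\rangle,\ -\vec{m}) \in \G_\Delta$.

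Now I invoke the defining relation of $\G_\Delta$: since $\vec{m} \in \ol{M}$, we have $(2\langle \vec{n}_\sigma,\vec{m}\rangle,\ -\vec{m}) \sim 0$ in $\G_\Delta$. Subtracting this from the output of the bottom-left path gives $(j,0)$, matching the top-right path. I should also verify (as a sanity check) that this output does not depend on the chosen lift of $\vec{m}$ to $\ol{M}$: changing the representative by $\vec{e}_{I_k}$ alters the bottom-left output by $(2\langle \vec{n}_\sigma,\vec{e}_{I_k}\rangle,\ -\vec{e}_{I_k}) = (2,-\vec{e}_{I_k})$, which is trivial in $\G_\Delta$ by the same relation applied to $\vec{m} = \vec{e}_{I_k}\in \ol{M}$. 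Finally, the sign maps to $\Z/2$ in all four grading data are compatible by inspection, since each of $\mathbf{q},\mathbf{s},\mathbf{p},\mathbf{r}$ preserves the parity of the first $\Z$-factor.

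There is no real obstacle here — the result is purely a matter of bookkeeping with the explicit formulas. The only thing that needs care is recognising that the apparent $\tilde{\G}$-dependence on the lift of $\vec{m} \in M$ to $\ol{M}$ is absorbed into the equivalence relation defining $\G_\Delta$, which is exactly what makes the square commute.
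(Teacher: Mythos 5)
Your proof is correct and follows the same route as the paper: chase the diagram using the explicit formulas for $\mathbf{p}$, $\mathbf{q}$, $\mathbf{r}$, $\mathbf{s}$, simplify the first component using $|\vec{m}| = \langle \vec{e}_I, \vec{m}\rangle$, and conclude from the defining relation of $\G_\Delta$. The additional sanity checks you include (well-definedness independent of the lift to $\ol{M}$, compatibility of the sign maps) are elided in the paper but are correct and not superfluous.
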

\begin{proof}
The maps in the square send
\begin{equation} \xymatrix{ (k , \vec{m}) \ar@{|->}[d] \ar@{|->}[r] & k \ar@{|->}[d] \\
 (k + 2\langle \vec{n}_\sigma - \vec{e}_I, \vec{m} \rangle,\vec{m}) \ar@{|->}[r] & (k+2\langle \vec{n}_\sigma,\vec{m}\rangle, -\vec{m})} \end{equation}
(recall that $\mathbf{p}$ is determined in  Lemma \ref{lem:phirelh2}). 
The commutativity follows because $(2\langle \vec{n}_\sigma,\vec{m}\rangle,-\vec{m}) = 0$ in $\G_{\Delta}$.
\end{proof}

By the existence of the commutative square \eqref{eqn:commsq} and \cite[Lemma 2.29]{Sheridan2015}, we have an isomorphism of categories
\begin{equation}
\label{eqn:grmf0} \mathbf{q}_* \mathbf{p}^* MF_{\tilde{\G}}(S,W) \cong \mathbf{s}^* \mathbf{r}_*MF_{\tilde{\G}}(S,W)_H,\end{equation}
where the subscript $H$ denotes equivariance with respect to a certain action of the dual group $H$ of the group
\begin{equation}
\label{eqn:Hdual}
\coker\left( \G/\Z \xrightarrow{\mathbf{p}} \ker\left(\tilde{\G}/\Z \xrightarrow{\mathbf{r}} \G_\Delta/\Z\right)\right).
\end{equation}
In this case, we have $\G/\Z \cong \ol{M}/\langle \vec{e}_{I_j}\rangle$, $\tilde{\G}/\Z \cong \Z^I/\langle \vec{e}_{I_j}\rangle$, and $\G_\Delta/\Z \cong \Z^I/\ol{M}$, so one easily verifies that \eqref{eqn:Hdual} is $0$: thus we may remove the $H$ from \eqref{eqn:grmf0}.

Combining \eqref{eqn:grmf0} with \eqref{eqn:orlov}, we obtain an embedding
\begin{equation} \mathbf{q}_* \mathbf{p}^* MF_{\tilde{\G}}(S,W)_\dm \hookrightarrow \grmf_\Gamma(S_\Lambda,W_\dm).\end{equation} 
In particular, we have an embedding
\begin{equation}
\label{eqn:bembeds}
 \mathbf{q}_* \buR_\dm \hookrightarrow   \grmf_\Gamma(S_\Lambda,W_\dm).
\end{equation}

\subsection{$W_\dm$ has an isolated singularity}

Let $\dm \in \mathbb{A}^{\Xi_0}$ have coefficients $(\dm_{\vec{p}})_{\vec{p} \in \Xi_0}$, with $val(\dm_{\vec{p}}) = \lambda_{\vec{p}}$. 
Let $W_\dm$ be as in \eqref{eqn:Wb}.
The aim of this section is to prove the following Proposition, which is based on the relationship between the tropical $A$-discriminant and the secondary fan (compare \cite{Gelfand1994,Dickenstein2007}), although we will not use that language.

\begin{prop}
\label{prop:isolsing}
If the MPCP condition holds, then $W_\dm$ has an isolated singularity at the origin.
\end{prop}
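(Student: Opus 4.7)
The weighted Euler identity $\sum_i q_i z_i\, \partial_i W_\dm = d\, W_\dm$ shows that the singular locus of $W_\dm$ is cut out by $\{\partial_i W_\dm\}_{i \in I}$, and this locus is stable under the weighted $\G_m$-action of weights $\vec{q}$, whose unique fixed point is the origin. Hence ``isolated singularity at the origin'' is equivalent to the nonexistence of a nonzero common zero $z$ of the partials. I plan to stratify such a hypothetical $z$ by the vanishing pattern $I_{=0}(z) := \{i : z_i = 0\}$ and argue by downward induction on $|I_{=0}|$: restricting $W_\dm$ to $\{z_i = 0 : i \in I_{=0}\}$ drops exactly the $\vec{e}_{I_j}$-monomials with $I_j \cap I_{=0} \neq \emptyset$ and the $\vec{p}$-monomials whose support meets $I_{=0}$, and the restriction inherits a face of the MPCP-subdivision $\tilde{\Sigma}_\lambda$ on the coordinate subspace. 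This reduces the essential case to a common zero $z \in (\Lambda^*)^I$ in the big torus.

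\textbf{The torus case.} Let $\vec{v} = \val(z) \in \R^I$ and $\zeta \in (\C^*)^I$ the leading coefficient of $z$, so $z_i = \zeta_i q^{v_i} + O(q^{v_i+\varepsilon})$. Consider the tropical polynomial
\[
T(\vec{v}) := \min\Big(\min_j \langle \vec{e}_{I_j}, \vec{v}\rangle,\ \min_{\vec{p} \in \Xi_0}(\lambda_{\vec{p}} + \langle \vec{p}, \vec{v}\rangle)\Big),
\]
and let $A(\vec{v}) \subset \{\vec{e}_{I_j}\}_j \sqcup \Xi_0$ be the set of exponents realizing this minimum. Since by Euler $W_\dm(z)=0$ as well, the simultaneous vanishing of $W_\dm$ and all $\partial_i W_\dm$ at $z$ forces the initial form
\[
W^{(0)}(z') := -\sum_{\vec{e}_{I_j} \in A(\vec{v})} (z')^{\vec{e}_{I_j}} + \sum_{\vec{p} \in A(\vec{v}) \cap \Xi_0} \mathrm{in}(\dm_{\vec{p}})\, (z')^{\vec{p}}
\]
to have $\zeta$ as a singular point in $(\C^*)^I$. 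The MPCP condition is brought in as follows: $\pi(\vec{v}) \in \til{M}_\R$ lies in the relative interior of a simplicial cone $\tau \in \tilde{\Sigma}_\lambda$ refining some cone of $\tilde{\Sigma}'$; consequently $\pi(A(\vec{v}) \cap \Xi_0)$ is (part of) the vertex set of a simplex, and the $\vec{e}_{I_j} \in A(\vec{v})$ are constrained to sit on strata of $\tilde{\Sigma}'$ containing $\tau$. A direct calculation using Euler relations on $W^{(0)}$ restricted to the active indices, together with this simplicial structure, will show that no $\zeta \in (\C^*)^I$ can be a critical point of $W^{(0)}$.

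\textbf{Main obstacle.} The delicate step is the last one, particularly when $A(\vec{v})$ mixes $\vec{e}_{I_j}$-terms with $\Xi_0$-terms. The $\vec{e}_{I_j}$ sit in relative interiors of faces of $\ol{\Delta}$ rather than on ray generators of $\tilde{\Sigma}'$, so the classical non-degeneracy result for Laurent polynomials supported on a regular simplicial triangulation does not directly apply. Handling this mixed case requires a careful case analysis keyed to which ``blocks'' $I_k$ meet $A(\vec{v})$, exploiting the strict convexity of $\psi_\lambda$ on $\tau$ to separate the contributions of the $\vec{e}_{I_j}$- and $\Xi_0$-exponents and to rule out cancellation of $W^{(0)}$ and all its partials at a torus-valued $\zeta$.
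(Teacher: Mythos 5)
Your reduction via the weighted Euler identity and the stratification by vanishing pattern are sound, and your tropical setup in the big-torus case (leading-term analysis, initial form $W^{(0)}$ supported on $A(\vec v)$) is the correct picture; it matches the overall tropical character of the paper's argument. But you have correctly identified that the argument is not finished: the ``main obstacle'' you describe is precisely the technical heart of the matter, and your proposal stops exactly where the real work begins.

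What you are missing is that the mixed case is not actually anomalous: the paper proves (its key Lemma, which you should think of as the missing ingredient) that the regular subdivision $\ol{T}_\lambda$ of $\ol\Delta$ induced by the weight vector $\bigl(0,\ldots,0,\lambda_{\vec p}\bigr)$ is in fact a \emph{triangulation}, even though it a priori includes both the vertices $\vec{e}_{I_j}$ and the points of $\Xi_0$ in a single cell. Once this is known, your last step is exactly the classical non-degeneracy result for hypersurfaces whose defining polynomial has a weight vector inducing a triangulation of the Newton polytope (Maclagan--Sturmfels Thm.~4.5.1, applied over $\Lambda$); no bespoke case analysis on $W^{(0)}$ is needed. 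The paper's lemma is proved by pushing down along $\pi:\R^I\to M_\R\cong\R^I/\langle\vec{e}_{I_1},\ldots,\vec{e}_{I_r}\rangle$: the weight vector on $\ol B$ is pulled back from the weight vector on $B=\pi(\ol B)$, whose induced subdivision is exactly $\tilde\Sigma_\lambda$ and hence simplicial by MPCP. One must then show that the lift of a simplex $\sigma$ of $T_\lambda$ is $\pi^{-1}(\sigma)\cap\ol\Delta$ and is still a simplex; the key structural inputs are that each $\vec p\in\Xi_0$ projects into a coordinate hyperplane under each $\mathrm{pr}_j:\Z^I\to\Z^{I_j}$ (so nonnegativity of the lifted barycentric coordinates $\beta_j$ on $\vec e_{I_j}$ comes for free), and that the projection forgets exactly the $r$-dimensional span of the $\vec e_{I_j}$, so linear dependence upstairs would force linear dependence downstairs. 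In short: your approach is the right one, but you must either reprove the triangulation statement about $\ol T_\lambda$ (which replaces your hypothetical case analysis on $W^{(0)}$) or cite it; without it the argument is genuinely incomplete.
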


\begin{rmk}
We will apply this result (in the proofs of Propositions \ref{prop:bsplitgens} and \ref{prop:asplitgens}) with $\dm = \Psi^{-1}(a(\lambda))$. 
Note that the mirror map $\Psi$ is at this stage undetermined; we only know that $\Psi^*(\nov_{\vec{p}}) = \pm \nov_{\vec{p}} + \fm^2$, which implies that $val(\dm_{\vec{p}}) = val(a_{\vec{p}}) = \lambda_{\vec{p}}$, but we do not know the precise coefficients $\dm_{\vec{p}}$. 
So it is a crucial feature of Proposition \ref{prop:isolsing} that it needs only to make an assumption on the valuations of the coefficients of $\dm$, rather than requiring precise knowledge of the coefficients themselves. 
\end{rmk}

We need some preliminary discussion before giving the proof of Proposition \ref{prop:isolsing}. 

We have a decomposition of $\mathbb{A}^I$ into toric orbits $(\G_m)^K$ indexed by subsets $K \subset I$. 
In order to prove that $W_\dm$ has an isolated singularity at the origin, it suffices to prove that the vanishing locus of $W_\dm|_{(\G_m)^K}$ is smooth for all $K$. 
We start with the case $K=I$.

Let $\ol{B} \subset \Z^I$ denote the set of monomials appearing in $W_\dm$ (their convex hull is the Newton polytope $\ol{\Delta}$). 
The valuations of the corresponding coefficients define a `weight vector' for these vectors (see \cite[Definition 2.3.8]{Maclagan2007}), which is equal to $0$ at $\vec{e}_{I_j}$ for $1 \le j \le r$, and equal to $\lambda_{\vec{p}}$ at $\vec{p}$ for $\vec{p} \in \Xi_0$. 
This weight vector induces a regular subdivision $\ol{T}_\lambda$ of $\ol{\Delta}$. 
If $\ol{T}_\lambda$ is a unimodular triangulation, then the vanishing locus of $W_{\dm}|_{(\G_m)^I}$ is smooth by \cite[Theorem 4.5.1]{Maclagan2007}; in fact the proof goes through verbatim without the assumption of unimodularity when the field has characteristic zero, so it suffices for us to prove that $\ol{T}_\lambda$ is a triangulation.

We consider the projection $\pi: \R^I \to M_\R$ from \S \ref{subsec:Aintro}, which sends all $\vec{e}_{I_j}$ to the origin. 
We set $\Delta := \pi(\ol{\Delta})$ (this clashes with the notation from \S \ref{subsec:Bintro}, but no confusion should result) and $B:= \pi(\ol{B})$, and define a weight vector for $B$ which is equal to $0$ at the origin and $\lambda_{\vec{p}}$ at $\pi(\vec{p})$ for $\vec{p} \in \Xi_0$. 
We denote the induced regular subdivision of $\Delta$ by $T_\lambda$: by definition it coincides with the fan $\tilde{\Sigma}_\lambda$, and therefore is a triangulation because $\tilde{\Sigma}_\lambda$ is simplicial by our assumption that the MPCP condition holds.

\begin{lem}
\label{lem:convlem}
Let $\sigma = \mathrm{Conv}(C)$ be a cell of $T_\lambda$, for some $C \subset B$. 
We denote $\ol{C} := \pi^{-1}(C) \cap \ol{B}$, and set $\ol{\sigma} := \mathrm{Conv}(\ol{C})$. 
We have:
\begin{itemize}
\item $\ol{\sigma}$ is a cell of $\ol{T}_{\lambda}$. 
\item $\ol{\sigma} = \pi^{-1}(\sigma) \cap \ol{\Delta}$.
\item $\ol{\sigma}$ is a simplex.
\end{itemize}
\end{lem}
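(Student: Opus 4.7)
The plan is to exploit the fact that the projection $\pi$ behaves very well with respect to the combinatorial data $\ol{B}$ and $B$. The key preliminary is that $\pi$ is injective on $\Xi_0$ and that $\pi^{-1}(0) \cap \ol{B} = \{\vec{e}_{I_1},\ldots,\vec{e}_{I_r}\}$: if $\vec{p},\vec{p}' \in \Xi_0$ satisfy $\pi(\vec{p}) = \pi(\vec{p}')$, then $\vec{p}-\vec{p}' = \sum_j c_j \vec{e}_{I_j}$ with $\sum_j c_j = 0$ (since both lie on $\langle \vec{n}_\sigma,-\rangle = 1$), and the ``at least two zeros in each $I_j$'' property of $\Xi_0$ forces each $c_j \ge 0$, hence all $c_j = 0$. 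A consequence is that the weights agree: $w_{\vec{p}} = w_{\pi(\vec{p})}$ for every $\vec{p} \in \ol{B}$, where by convention the origin and each $\vec{e}_{I_j}$ carry weight $0$ and each $\vec{p} \in \Xi_0$ carries weight $\lambda_{\vec{p}}$.

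This immediately yields Claim 1: a linear functional $\phi \in \til{M}_\R^*$ exhibiting $\sigma$ as a cell of $T_\lambda$, i.e.\ minimizing $\langle\phi,v\rangle + w_v$ on $B$ precisely on $C$, pulls back to $\phi \circ \pi \in (\R^I)^*$, which minimizes $\langle\phi\circ\pi,\vec{p}\rangle + w_{\vec{p}}$ on $\ol{B}$ precisely on $\pi^{-1}(C) \cap \ol{B} = \ol{C}$; hence $\ol{\sigma}$ is a cell of $\ol{T}_\lambda$.

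For Claim 2, the inclusion $\ol{\sigma} \subset \pi^{-1}(\sigma) \cap \ol{\Delta}$ is immediate. For the reverse, take $\vec{p}^* \in \pi^{-1}(\sigma) \cap \ol{\Delta}$ and write $\pi(\vec{p}^*) = \sum_{v \in C} t_v v$ uniquely with $t_v \ge 0$ and $\sum_v t_v = 1$. Denoting the unique lift in $\Xi_0$ of each $v \in C \setminus \{0\}$ by $\vec{p}(v)$, and setting $\vec{p}' := \sum_{v \neq 0} t_v \vec{p}(v)$, one has $\vec{p}^* - \vec{p}' \in \ker\pi$, hence $\vec{p}^* = \vec{p}' + \sum_k c_k \vec{e}_{I_k}$; evaluating $\langle\vec{n}_\sigma,-\rangle$ on both sides gives $\sum_k c_k = t_0$ (where $t_0 = 0$ if $0 \notin C$). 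Non-negativity of the coordinates of $\vec{p}^*$ forces $c_k \ge -\min_{i \in I_k} \vec{p}'_i$, and since each $\vec{p}(v) \in \Xi_0$ has a zero coordinate in block $I_k$ this minimum is $0$; hence $c_k \ge 0$, and $\vec{p}^*$ is visibly a convex combination of elements of $\ol{C}$.

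Claim 3 reduces to affine independence of $\ol{C}$. Suppose $\sum_j a_j \vec{e}_{I_j} + \sum_{v \neq 0} b_v \vec{p}(v) = 0$ with $\sum_j a_j + \sum_v b_v = 0$. Applying $\pi$ gives $\sum_v b_v \cdot v = 0$ in $\til{M}_\R$; since $\sigma$ is a simplex of $T_\lambda$, the set $\{v : v \in C \setminus \{0\}\}$ is linearly independent in $\til{M}_\R$ (being the non-origin vertices of a simplex), so $b_v = 0$ for every such $v$. The remaining relation $\sum_j a_j \vec{e}_{I_j} = 0$ with $\sum_j a_j = 0$ then forces $a_j = 0$ by linear independence of the $\vec{e}_{I_j}$ in $\R^I$. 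The main obstacle is Claim 2: the $(r-1)$-dimensional fibers of $\pi|_{\ol{\Delta}}$ over interior points of $\sigma$ give $\pi^{-1}(\sigma) \cap \ol{\Delta}$ a priori greater dimension than $\sigma$, and it is essential that, precisely when $0 \in C$, this extra dimension is accounted for by the vertices $\vec{e}_{I_k}$ added to $\ol{C}$—an accounting that works only because the $\Xi_0$ zero-coordinate condition prevents any additional fiber expansion.
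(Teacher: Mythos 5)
Your Claims 1 and 3 are correct, and your route to Claim 3 (applying $\pi$ immediately to kill the $\vec{e}_{I_j}$ terms) is a mild streamlining of the paper's argument, which first reduces the dependence to one on $\ol{C}'$ by a coordinate-vanishing step before pushing down by $\pi$. But Claim 2 has a genuine gap, and it occurs exactly where the MPCP hypothesis must enter. You assert that "since each $\vec{p}(v) \in \Xi_0$ has a zero coordinate in block $I_k$ this minimum is $0$," but this does not follow: the individual $\vec{p}(v)$ each vanish \emph{somewhere} in $I_k$, yet those zeros may occur at \emph{different} indices for different $v$, in which case $\vec{p}' = \sum_v t_v \vec{p}(v)$ has all strictly positive entries in block $I_k$, so $\min_{i \in I_k} \vec{p}'_i > 0$ and you cannot conclude $c_k \ge 0$.

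The missing ingredient, supplied in the paper, is that $\tilde{\Sigma}_\lambda$ refines the product fan $\tilde{\Sigma}' = \prod_j \tilde{\Sigma}'_j$, which is part of the MPCP assumption. Since $\sigma$ lies in a single cone of $\tilde{\Sigma}_\lambda$, the projection of $\sigma$ to $\Z^{I_j}/\vec{e}_{I_j}$ lies in a single maximal cone of the fan of $\mathbb{P}^{|I_j|-1}$, namely the cone omitting the ray through $\bar{e}_k$ for some fixed $k \in I_j$. Combined with non-negativity of the coordinates of each $\vec{p}(v)$ (from $\Xi_0 \subset (\Z_{\ge 0})^I$) and the $\Xi_0$ vanishing condition, this forces $(\vec{p}(v))_k = 0$ for \emph{all} $v$ simultaneously, i.e.\ a shared vanishing coordinate; only then is $\min_{i \in I_k}\vec{p}'_i = \vec{p}'_k = 0$ and $c_k \ge 0$. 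Your closing remark gestures at the phenomenon but does not supply this argument, and without invoking the refinement property the inequality you need can fail, so the step cannot be omitted. (For what it's worth, the paper also uses this same refinement argument to establish $\beta_j = 0$ in its version of Claim 3, whereas your version of Claim 3 avoids it, which is a small gain.)
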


\begin{proof}[Proof of Proposition \ref{prop:isolsing}]
The simplices $\sigma$ cover $\Delta$, so the simplices $\ol{\sigma} = \pi^{-1}(\sigma) \cap \ol{\Delta}$ cover $\ol{\Delta}$; it follows that $\ol{T}_\lambda$ is a triangulation as required. 
Therefore the vanishing locus of $W_\dm|_{(\G_m)^K}$ is smooth for $K = I$. 
It follows also that the restriction of $\ol{T}_\lambda$ to any coordinate hyperplane is a triangulation, and hence that the analogous result holds for any $K$. 
\end{proof}

\begin{proof}[Proof of Lemma \ref{lem:convlem}]
The first claim follows immediately from the fact that the weight vector for $\ol{B}$ is pulled back from that for $B$. 
For the second claim, it is immediate that $\ol{\sigma} \subset \pi^{-1}(\sigma)$. 
What remains to prove is the reverse inclusion, so let $\vec{x} \in \pi^{-1}(\sigma) \cap \ol{\Delta}$; we will show that $\vec{x}$ lies in the convex hull of $\ol{C}$.
 
Observe that $\pi|_{\Xi_0}$ is injective, so it identifies $C' := C \setminus \{0\}$ with $\ol{C}' := \ol{C} \setminus \pi^{-1}(\vec{0})$. 
We have $\ol{C} = \ol{C}'$ if $\vec{0} \notin C$, and $\ol{C} = \ol{C}' \sqcup \{\vec{e}_{I_j}\}_{\{j = 1,\ldots,r\}}$ if $\vec{0} \in C$.
We have
\begin{align}
\label{eqn:pixinsigma}
\pi(\vec{x}) &= \sum_{\vec{c} \in C}\alpha_{\vec{c}} \cdot \vec{c},\quad\text{ where $\alpha_{\vec{c}} \ge 0$, $\sum_{\vec{c} \in C} \alpha_{\vec{c}} = 1$.} 
\end{align}
It follows that
\begin{align}
\label{eqn:xinsigma}
\vec{x} &= \sum_{\vec{c} \in \ol{C}'} \alpha_{\pi(\vec{c})} \cdot \vec{c} + \sum_{j=1}^r \beta_j \cdot \vec{e}_{I_j}.
\end{align}
Now we consider the following diagram:
\begin{equation} \xymatrix{\Z^I \ar[r] \ar[d]_-{\mathrm{pr}_j} & \Z^I/\langle \vec{e}_{I_j}\rangle \ar[d] \\
\Z^{I_j} \ar[r] & \Z^{I_j}/\vec{e}_{I_j}.}\end{equation} 
Applying $\mathrm{pr}_j$ to \eqref{eqn:xinsigma}, we obtain 
\begin{align}
\label{eqn:xinsigma2}
 \mathrm{pr}_j(\vec{x}) &=  \sum_{\vec{c} \in \ol{C}'} \alpha_{\pi(\vec{c})} \cdot \mathrm{pr}_j(\vec{c}) +\beta_j \cdot \vec{e}_{I_j}.
\end{align}
Now any element of $\Xi_0$ must project to an element of $\Z^{I_j}$ with at least two vanishing coordinates, by definition of $\Xi_0$, so the same is true of $\ol{C}' \subset \Xi_0$. 
Furthermore, because $\tilde{\Sigma}_\lambda$ is assumed to be a refinement of $\tilde{\Sigma}' := \prod_j \tilde{\Sigma}'_j$, the projection of $\sigma$ to $\Z^{I_j}/\vec{e}_{I_j}$ lies inside a cone of $\tilde{\Sigma}'_j$. 
It follows that $\mathrm{pr}_j(\ol{C}')$ lies inside a coordinate hyperplane of $\Z^{I_j}$. 
Examining \eqref{eqn:xinsigma2}, and observing that $\vec{x} \in \ol{\Delta} \subset (\R_{\ge 0})^I$, it follows that $\beta_j \ge 0$. 

Applying $\langle \vec{n}_\sigma,-\rangle$ to \eqref{eqn:xinsigma}, we find that 
\begin{align}
\label{eqn:sumto1}
\sum_{\vec{c} \in \ol{C}'} \alpha_{\pi(\vec{c})} + \sum_{j=1}^r \beta_j &= \langle \vec{n}_\sigma, \vec{x} \rangle = 1.
\end{align} 
We now have two cases: if $0 \in C$, then $\vec{e}_{I_j} \in \ol{C}$ for all $j$, and \eqref{eqn:xinsigma} expresses the fact that $\vec{x}$ lies in the convex hull of $\ol{C}$ (since we have proved that the coefficients are non-negative and sum to $1$). 
If $0 \notin C$, then $\ol{C} = \ol{C}'$ so we have $\sum_{\vec{c} \in \ol{C}'} \alpha_{\pi(\vec{c})} = \sum_{\vec{c} \in C} \alpha_{\vec{c}} =1$, from which it follows by \eqref{eqn:sumto1} that $\sum_{j=1}^r \beta_j = 0$. 
Since we showed that $\beta_j \ge 0$, we conclude that $\beta_j = 0$ for all $j$, so \eqref{eqn:xinsigma} again expresses the fact that $\vec{x}$ lies in the convex hull of $\ol{C}$.

The third claim is equivalent to the claim that the set $\ol{C}$ is linearly independent. 
Suppose to the contrary that it is linearly dependent. 
We claim that this implies that $\ol{C}'$ is linearly dependent. 
Indeed, if $0 \notin C$, then $\ol{C}' = \ol{C}$ so there is nothing to prove. 
If $0 \in C$, then \eqref{eqn:xinsigma} holds with $\vec{x}$ replaced by $\vec{0}$. 
The previous argument applies to show that $\beta_j = 0$ for all $j$, and hence that $\ol{C}'$ is linearly dependent.
 
Now, linear dependence of $\ol{C}'$ implies linear dependence of $\pi(\ol{C}') = C'$, which contradicts our assumption that $T_\lambda$ is a triangulation. 
Therefore $\ol{C}$ must be linearly independent, so $\ol{\sigma}$ is a simplex as required.
\end{proof}

\subsection{Split-generation}
\label{subsec:splitgenerate}

We now have $A_\infty$ embeddings
\begin{equation}
\label{eqn:subcats}
 \xymatrix{ \mathbf{q}_* \buR_{\dm(\lambda)} \ar@{^{(}->}[r]^-{\eqref{eqn:verspec}}  \ar@{^{(}->}[d]^-{\eqref{eqn:bembeds}} & \mathbf{q}_* \auR_{a(\lambda)}^\bc \ar@{^{(}->}[d]^-{\eqref{eqn:relembabs}} \\
\grmf_\Gamma(S_\Lambda,W_{\dm(\lambda)}) & \fuk(X,\omega_\lambda)^\bc.}
\end{equation}

We will denote $\mathbf{C} := \mathbf{q}_* \buR_{\dm(\lambda)}$, and regard it as a full subcategory $\mathbf{C} \subset \grmf_\Gamma(S_\Lambda,W_{\dm(\lambda)})$ which is identified with a full subcategory $\mathbf{C} \subset  \fuk(X,\omega_\lambda)^\bc$ in accordance with \eqref{eqn:subcats}. 
In this section we prove:

\begin{prop}
\label{prop:bsplitgens}
If the MPCP condition holds, then $\mathbf{C}$ split-generates $\grmf_\Gamma(S_\Lambda,W_{\dm(\lambda)})$.
\end{prop}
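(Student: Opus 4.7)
The plan is to combine the isolated singularity input from Proposition \ref{prop:isolsing} with a standard generation result for categories of (equivariant graded) matrix factorizations. Recall from \S\ref{subsec:grmf} that the embedding \eqref{eqn:bembeds} is obtained by first pushing $\buR = \mathbf{p}^*\bfbR$ forward along the grading morphism $\mathbf{r}$ and then specialising; at the level of objects this sends $\mathcal{O}_0$ and its shifts in $\G$ to the stabilised residue field of the origin and its various shifts in $\G_\Delta$. Thus $\mathbf{C}$ contains $\mathcal{O}_0$ together with all of the shifts $\mathcal{O}_0[\mathbf{r}\circ\mathbf{p}(\vec{g})]$ for $\vec{g}\in \G$. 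Since the MPCP condition holds, Proposition \ref{prop:isolsing} implies that $W_{\dm(\lambda)}$ has an isolated singularity at the origin.

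The second ingredient is the following classical fact: if $W\in S_\Lambda$ has an isolated singularity at the origin, then $\grmf_\Gamma(S_\Lambda,W)$ is split-generated by the stabilised residue field $\mathcal{O}_0$ of the origin together with all of its shifts and equivariant twists. In the non-equivariant graded setting this is due to Orlov \cite{Orlov2009} (via the equivalence of $\grmf(S_\Lambda,W)$ with an appropriate component of the derived category of the hypersurface and classical generation results for singularity categories of isolated hypersurface singularities, going back to Schoutens and Dyckerhoff); the equivariant enhancement is available via the work of Ballard and collaborators on equivariant matrix factorisations (alternatively one can go through the Favero--Kelly--Isik--Shipman type equivalences cited in the paper and appeal to generation results on the corresponding stacky singularity category). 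The strategy is to reduce directly to the form of the statement available in the literature; this is the route suggested by Matt Ballard and acknowledged in the introduction.

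What remains is a bookkeeping step: verify that the shifts of $\mathcal{O}_0$ appearing in $\mathbf{C}$ exhaust the set of shifts and equivariant twists needed by the literature statement. Using the commutative square \eqref{eqn:commsq} of grading data together with the identification $\G_\Delta/\Z \cong \Z^I/\ol{M}$ and the computation that the obstruction group \eqref{eqn:Hdual} vanishes, one checks that the image of $\mathbf{p}\circ\mathbf{q}:\G\to \G_\Delta$ (modulo the cohomological $\Z$) surjects onto $\G_\Delta/\Z$. Consequently every $\G_\Delta$-shift of $\mathcal{O}_0$ is present in $\mathbf{C}$, so the generation result applies and gives split-generation of $\grmf_\Gamma(S_\Lambda,W_{\dm(\lambda)})$.

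The main obstacle is the second step: locating a form of the equivariant graded split-generation statement that fits our setup exactly, since the $\G_\Delta$-grading mixes the $\Z/d$-piece of the matrix factorization grading with the $\Gamma$-equivariance. Once this is reduced to a literature result the rest of the argument is essentially formal, and the bookkeeping in the final paragraph is made routine by the vanishing of \eqref{eqn:Hdual}, which is the reason we set up the grading data to fit in the square \eqref{eqn:commsq} in the first place.
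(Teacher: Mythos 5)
Your general architecture is right (isolated singularity of $W_{\dm(\lambda)}$ from Proposition~\ref{prop:isolsing}, then generation by the skyscraper $\mathcal{O}_0$ and its shifts), but the central technical step is missing: you assert, as a "classical fact to be located in the literature," that $\grmf_\Gamma(S_\Lambda,W)$ is split-generated by $\mathcal{O}_0$ together with its shifts and equivariant twists whenever $W$ has an isolated singularity. You yourself flag this as "the main obstacle." This is precisely the step the paper actually proves, and the proof is not a citation but a short argument: one defines an explicit functor $\mathsf{ind}:MF\to\mathbf{s}^*MF$ sending $K$ to $\bigoplus_{\vec{g}\in\Z^I/\ol{M}}K[s(\vec{g})]$, checks the two-sided adjunction $\mathsf{ind}\dashv\mathsf{res}\dashv\mathsf{ind}$, and then combines Thomason--Trobaugh/Neeman (Proposition~\ref{prop:splitweak}: split-generation of compact objects is equivalent to weak generation of the cocomplete category) with adjointness to push the Dyckerhoff--Seidel generation statement (Proposition~\ref{prop:kstab}) from $MF$ to $\mathbf{s}^*MF$. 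Concretely: if $Q$ is right-orthogonal to $\mathsf{ind}(\mathcal{O}_0)$, then $\mathsf{res}(Q)$ is right-orthogonal to $\mathcal{O}_0$, hence $\mathsf{res}(Q)\cong 0$, and one recovers $Q\cong 0$ as a summand of $\mathsf{ind}\circ\mathsf{res}(Q)$. Without this (or an explicit reference that really covers the $\G_\Delta$-graded setting), your argument has a hole exactly where the work is.

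A secondary issue: your "bookkeeping" paragraph is not quite the check that is needed, and contains a typo ($\mathbf{p}\circ\mathbf{q}$ does not compose; you presumably mean the common composite $\mathbf{r}\circ\mathbf{p}=\mathbf{s}\circ\mathbf{q}:\G\to\G_\Delta$). The relevant observation in the paper is much more modest: $\mathsf{ind}(\mathcal{O}_0)=\bigoplus_{\vec{g}}\mathcal{O}_0[s(\vec{g})]$ is by construction a finite direct sum of shifts of $\mathcal{O}_0$, all of which lie in $\mathbf{C}$ since $\mathbf{C}$ is closed under shifts. One does not need (and the paper does not use) a surjectivity statement about the image of the grading morphism onto $\G_\Delta/\Z$; what matters is only that the finitely many shifts $\mathcal{O}_0[s(\vec{g})]$, $\vec{g}\in\Z^I/\ol{M}$, are already objects of $\mathbf{C}$. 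The vanishing of \eqref{eqn:Hdual} is used earlier, to identify $\mathbf{q}_*\mathbf{p}^*MF_{\tilde\G}(S,W)$ with (an unadorned) $\mathbf{s}^*\mathbf{r}_*MF_{\tilde\G}(S,W)$, not to close the generation argument.
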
 

\begin{prop}
\label{prop:asplitgens}
If the MPCS condition holds, then $\mathbf{C}$ split-generates $D^\pi \fuk(X,\omega_\lambda)^\bc$.
\end{prop}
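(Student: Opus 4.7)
The plan is to apply Abouzaid's generation criterion: $\mathbf{C}$ split-generates $D^\pi \fuk(X,\omega_\lambda)^\bc$ as soon as the unit $1 \in \QH^\bullet(X)$ lies in the image of the composition $\HH_\bullet(\mathbf{C}) \hookrightarrow \HH_\bullet(\fuk(X,\omega_\lambda)^\bc) \xrightarrow{\EuO\EuC} \QH^\bullet(X)$. Indeed, if $1 \in V := \EuO\EuC(\HH_\bullet(\mathbf{C}))$, then for any object $K$ the unitality of $\EuC\EuO^0$ (assumed in \S \ref{subsec:ass}) gives $1_K = \EuC\EuO^0(1)$ lying in the image of $\HH_\bullet(\mathbf{C}) \to \hom(K,K)$, which by the Cardy relation suffices for Abouzaid's criterion.

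First I would invoke Proposition \ref{prop:bsplitgens}, available because MPCS implies MPCP, to identify $\HH_\bullet(\mathbf{C}) \cong \HH_\bullet(\grmf_\Gamma(S_\Lambda, W_{\dm(\lambda)}))$. Since Proposition \ref{prop:isolsing} ensures that $W_{\dm(\lambda)}$ has an isolated singularity at the origin, $\grmf_\Gamma(S_\Lambda, W_{\dm(\lambda)})$ is smooth and proper over $\Lambda$; in particular $\HH_\bullet(\mathbf{C})$ carries a canonical non-degenerate Shklyarov Mukai pairing. Combined with our assumption that $\EuO\EuC$ respects pairings and the non-degeneracy of the Poincar\'e pairing on $\QH^\bullet(X) \cong H^\bullet(X;\Lambda)$, it follows that $\EuO\EuC|_{\HH_\bullet(\mathbf{C})}$ is an isometric injection and $V$ is a non-degenerate subspace of $\QH^\bullet(X)$, so $\QH^\bullet(X) = V \oplus V^\perp$.

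To conclude $1 \in V$ I would follow the strategy used in \cite{Sheridan2015} for the analogous statement: identify the fundamental class $[1_\mathbf{C}] \in \HH_0(\mathbf{C})$ determined by the smooth proper Calabi--Yau structure on $\grmf_\Gamma(S_\Lambda,W_{\dm(\lambda)})$ (characterised by $\langle [1_\mathbf{C}], -\rangle_{Muk} = \mathrm{tr}$), and prove that $\EuO\EuC([1_\mathbf{C}]) = 1 \in \QH^\bullet(X)$. The assumption that $\mathrm{tr}(\beta) = \int_X \EuO\EuC(\beta)$ defines the weak proper Calabi--Yau structure on $\fuk(X,\omega_\lambda)^\bc$, combined with the pairing compatibility of $\EuO\EuC$, already forces $1 - \EuO\EuC([1_\mathbf{C}]) \in V^\perp$; the heart of the argument is then to rule out a nonzero orthogonal component, using the Cardy relation and unitality of $\EuC\EuO^0$ in conjunction with the structural results of \cite{Ganatra2015}, so that the class in $V^\perp$ is forced to act trivially on every object and therefore must vanish.

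The main obstacle is this last identification, which amounts to comparing the intrinsic Calabi--Yau structure on $\mathbf{C}$ inherited via Proposition \ref{prop:bsplitgens} from $\grmf_\Gamma(S_\Lambda, W_{\dm(\lambda)})$ with the restriction of the weak proper Calabi--Yau structure on $\fuk(X,\omega_\lambda)^\bc$. The assumptions in \S \ref{subsec:ass}, in particular that $\EuO\EuC$ respects pairings and that the Cardy relation holds, are designed precisely to enable this comparison, following the template of \cite[Proposition 5.22]{Sheridan2015a}; the sign and normalisation bookkeeping must be carried out carefully in this generalised setting.
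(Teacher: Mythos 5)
Your opening framing and the first half of your argument track the paper's proof closely: you correctly plan to apply Abouzaid's criterion, you correctly use Proposition \ref{prop:isolsing} together with Proposition \ref{prop:bsplitgens} and Orlov's theorem to make $\mathbf{C}$ smooth and proper, you correctly appeal to Shklyarov to get a non-degenerate Mukai pairing on $\HH_\bullet(\mathbf{C})$, and you correctly combine this with the ``$\EuO\EuC$ respects pairings'' assumption to deduce that $\EuO\EuC|_{\HH_\bullet(\mathbf{C})}$ is injective. Up to there you are verbatim on the paper's track.

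The final step is where you diverge, and where your proposal has a real gap. You attempt to identify the exact preimage of the unit by constructing a fundamental class $[1_\mathbf{C}]$ and proving $\EuO\EuC([1_\mathbf{C}]) = 1$, observing that $1 - \EuO\EuC([1_\mathbf{C}]) \in V^\perp$ and then trying to force the $V^\perp$-component to vanish. But nothing in \S \ref{subsec:ass} forces an element of $V^\perp \subset \QH^\bullet(X)$ to be zero: the decomposition $\QH^\bullet = V \oplus V^\perp$ exists because $V$ carries a non-degenerate pairing, but $V^\perp$ need not vanish, and your suggestion that ``the class acts trivially on every object and therefore must vanish'' would require $\EuC\EuO$ to be injective on $\QH^\bullet(X)$ --- which is a closed-string mirror symmetry statement not available here. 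You flag this yourself as ``the main obstacle,'' and indeed it is not resolved by your outline.

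The paper sidesteps all of this with a dimension count that you did not spot. Since $D^\pi(\mathbf{C})$ is not the zero category, $\HH^0(\mathbf{C}) \neq 0$. The weak proper Calabi--Yau structure gives $\HH_n(\mathbf{C})^\vee \cong \HH^0(\mathbf{C}) \neq 0$, so $\HH_n(\mathbf{C}) \neq 0$, and then non-degeneracy of the Mukai pairing gives $\HH_{-n}(\mathbf{C}) \neq 0$. Since $\EuO\EuC$ shifts degree by $n$ and is injective, $\EuO\EuC: \HH_{-n}(\mathbf{C}) \to \QH^0(X;\Lambda) \cong \Lambda$ is injective with non-zero domain, hence surjective, hence hits the unit. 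No comparison of Calabi--Yau structures, no fundamental class, and no argument about $V^\perp$ is needed: one only needs $\QH^0(X;\Lambda)$ to be one-dimensional. I recommend you replace your last two paragraphs with this argument.
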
 

These two Propositions (together with the observation that the `$\bc$' can be removed everywhere from Lemma \ref{lem:versality} onwards, if the no $\bc$ condition holds) complete the proof of Theorems \ref{main:dim2} and \ref{main:higherdim}.

We start by recalling some background. 
Let $\mathcal{D}$ be a triangulated category (e.g., the cohomology category of a triangulated $A_\infty$ category). 
Let $\mathcal{E} \subset \mathcal{D}$ be a full subcategory; recall that the \emph{right orthogonal complement} of $\mathcal{E}$ is the full subcategory of $\mathcal{D}$ consisting of all objects $L$ such that $\Hom(E[i],L) \cong 0$ for all objects $E$ of $\mathcal{E}$ and all $i \in \Z$.
If the right orthogonal complement of $\mathcal{E}$ vanishes, we say that $\mathcal{E}$ \emph{weakly generates} the category. 

Now let $\mathcal{D}$ be a triangulated category which admits arbitrary direct sums. 
Recall that an object $K$ of such a category is called \emph{compact} if $\Hom(K,-)$ commutes with direct sums, and denote by $\mathcal{D}_c \subset \mathcal{D}$ the full subcategory of compact objects. 
The following result is due to \cite{Thomason1990,Neeman1992} (a proof can also be found in \cite[Proposition 13.34.6]{SP2017}).

\begin{prop}[Thomason--Trobaugh, Neeman]
\label{prop:splitweak}
If $\mathcal{E} \subset \mathcal{D}_c$ is a subcategory with finitely many objects, then it split-generates $\mathcal{D}_c$ if and only if it weakly generates $\mathcal{D}$.
\end{prop}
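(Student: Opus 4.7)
The plan is to handle the two implications separately. The forward direction is essentially formal, while the reverse direction is the substantive one and relies on Neeman's characterisation of compact objects in a localising subcategory generated by compacts.

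For the ``only if'' direction, suppose $\mathcal{E}$ split-generates $\mathcal{D}_c$. Any object $L \in \mathcal{D}$ right-orthogonal to $\mathcal{E}$ is then automatically right-orthogonal to every split summand of a finite iterated cone on shifts of objects of $\mathcal{E}$, hence to every object of $\mathcal{D}_c$. Under the standing compact-generation hypothesis on $\mathcal{D}$ (which is built into the theorem, since weak generation of $\mathcal{D}$ by a set of compacts is exactly what ``compactly generated'' means), this forces $L \cong 0$, so $\mathcal{E}$ weakly generates $\mathcal{D}$.

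For the ``if'' direction, write $\langle \mathcal{E} \rangle_{loc}$ for the smallest strictly full triangulated subcategory of $\mathcal{D}$ containing $\mathcal{E}$ and closed under arbitrary direct sums, and $\langle \mathcal{E} \rangle_{thick}$ for the smallest thick subcategory of $\mathcal{D}_c$ containing $\mathcal{E}$. The plan is in two steps. First, I would show $\langle \mathcal{E} \rangle_{loc} = \mathcal{D}$: since $\mathcal{E}$ is a set of compact generators of $\langle \mathcal{E} \rangle_{loc}$, Brown representability supplies a right adjoint to the inclusion $\langle \mathcal{E} \rangle_{loc} \hookrightarrow \mathcal{D}$, yielding a Bousfield localisation whose kernel is the right orthogonal $\langle \mathcal{E} \rangle_{loc}^{\perp}$. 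A compactness argument (morphisms out of a compact object commute with arbitrary coproducts, hence with the homotopy colimits used to build $\langle \mathcal{E} \rangle_{loc}$) then identifies $\langle \mathcal{E} \rangle_{loc}^{\perp}$ with the class of objects $L \in \mathcal{D}$ such that $\mathrm{Hom}_{\mathcal{D}}(E[i], L) = 0$ for all $E \in \mathcal{E}$ and all $i \in \mathbb{Z}$. By the weak-generation hypothesis this class vanishes, so $\langle \mathcal{E} \rangle_{loc} = \mathcal{D}$.

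The second step, which is the main obstacle, is to invoke Neeman's theorem: the subcategory of compact objects of $\langle \mathcal{E} \rangle_{loc}$ coincides with the idempotent completion of $\langle \mathcal{E} \rangle_{thick}$ taken inside $\mathcal{D}_c$. Combined with step one this gives $\mathcal{D}_c = (\langle \mathcal{E} \rangle_{loc})_c$, which is precisely the statement that $\mathcal{E}$ split-generates $\mathcal{D}_c$. The proof of Neeman's theorem proceeds by presenting an arbitrary compact $K \in \langle \mathcal{E} \rangle_{loc}$ as a homotopy colimit of a telescope $K_0 \to K_1 \to \cdots$ whose stages $K_n$ lie in $\langle \mathcal{E} \rangle_{thick}$, each $K_{n+1}$ obtained from $K_n$ as the cone on a map from a finite direct sum of shifts of objects of $\mathcal{E}$ chosen to kill all remaining morphisms to $K$; compactness of $K$ then forces $\mathrm{id}_K$ to factor through some $K_n$, exhibiting $K$ as a split summand of an object of $\langle \mathcal{E} \rangle_{thick}$. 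Since a careful proof along exactly these lines is already written down in \cite[Proposition 13.34.6]{SP2017}, the plan would be to cite it rather than reproduce the full telescope construction here.
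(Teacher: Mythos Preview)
Your sketch is correct and follows the standard Neeman argument. However, the paper does not actually prove this proposition: it is stated as a known result due to Thomason--Trobaugh and Neeman, and the paper simply cites \cite{Thomason1990,Neeman1992} together with \cite[Proposition 13.34.6]{SP2017} for a proof. So your proposal goes beyond what the paper does, in that you provide a genuine outline of the telescope/Bousfield-localisation argument before deferring to the same Stacks Project reference. One minor remark: your ``only if'' direction tacitly assumes $\mathcal{D}$ is compactly generated, which is not literally part of the stated hypotheses; but this is harmless here since the paper only ever invokes the ``if'' direction (in Corollary~\ref{cor:indOsplgen}), and in that application $\mathcal{D} = \mathbf{s}^* MF^\infty$ is compactly generated anyway.
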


For the remainder of this section, let us abbreviate $MF:= MF_{\G_\Delta}(S_\Lambda,W_{\dm(\lambda)})$, and let $MF^\infty$ denote the corresponding category of matrix factorizations of possibly infinite rank (which admits arbitrary direct sums). 
Then we have the following result, which is proved in \cite[Corollary 4.10]{Dyckerhoff2009} and \cite[Lemma 12.1]{Seidel2008a}:

\begin{prop}[Dyckerhoff, Seidel]
\label{prop:kstab}
If $W_{\dm(\lambda)}$ has an isolated critical point at the origin, then the object $\mathcal{O}_0$ split-generates $MF$.
\end{prop}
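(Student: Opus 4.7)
The plan is to invoke the classical principle that at an isolated hypersurface singularity, the Koszul matrix factorization corresponding to the maximal ideal generates all matrix factorizations. This is a standard result in local algebra, whose adaptation to our graded equivariant setting is the content of the two cited references.

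First, I would pass to the ind-completion $MF^\infty$, which admits arbitrary direct sums, and recall that its subcategory of compact objects recovers $MF$. By Proposition \ref{prop:splitweak} (Thomason--Trobaugh--Neeman), to show that the finite collection $\{\mathcal{O}_0\}$ split-generates $MF$, it suffices to show that $\mathcal{O}_0$ weakly generates $MF^\infty$: namely, that any $F \in MF^\infty$ with $\Hom^\bullet(\mathcal{O}_0[i], F) = 0$ for all shifts $i$ must vanish.

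Second, I would identify $\Hom^\bullet(\mathcal{O}_0, F)$ concretely. Since $\mathcal{O}_0$ is the Koszul matrix factorization of the regular sequence $(z_i)_{i \in I}$, this hom-complex is a twisted Koszul complex on $F$, whose vanishing is a derived Nakayama-type statement detecting the fiber of $F$ at the origin. Third, I would exploit Proposition \ref{prop:isolsing}: away from the origin the critical locus of $W_{\dm(\lambda)}$ is empty, so the hypersurface $\{W_{\dm(\lambda)} = 0\}$ is smooth and its singularity category vanishes on the complement of the origin. Hence, in the language of the equivalence between $MF$ and the singularity category $D_{sg}^{\G_\Delta}(S_\Lambda/W_{\dm(\lambda)})$, any object of $MF^\infty$ has ``support'' contained in the origin; combining this with the vanishing from the second step forces $F = 0$.

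The main obstacle will be reconciling the $\G_\Delta$-graded, $\Gamma$-equivariant setting with the classical local-algebra arguments, which are typically phrased over a local ring with a single closed point. One approach, following Dyckerhoff, is to use that the $\Gamma$-action has a unique fixed closed point (the origin), so after passing to the equivariant singularity category one can still work Zariski-locally and let the Koszul object detect the maximal ideal. Seidel's approach is more geometric, directly constructing filtrations that exhibit every object as an iterated extension of $\mathcal{O}_0$ once an isolated singularity hypothesis is imposed. Either route suffices, and I would defer to the cited references for the technical details rather than rederive them.
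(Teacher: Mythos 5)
The paper does not supply its own proof of this proposition---it cites \cite[Corollary 4.10]{Dyckerhoff2009} and \cite[Lemma 12.1]{Seidel2008a} directly, exactly the two sources named in the attribution. Your outline is a fair summary of the mechanism in those references: reduce split-generation of $MF$ to weak generation in $MF^\infty$ via Proposition \ref{prop:splitweak}, identify $hom^\bullet(\mathcal{O}_0,F)$ as a Koszul-type complex measuring the fiber of $F$ at the origin, and use the isolated-singularity hypothesis to conclude that every object of $MF^\infty$ is supported at the origin so that the vanishing of this complex forces $F\cong 0$. That is Dyckerhoff's route; your characterization of Seidel's argument (filtering arbitrary objects by shifts of $\mathcal{O}_0$ under the isolated-singularity hypothesis) is likewise accurate. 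Deferring the technical details to the cited sources is precisely what the paper does.

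Two small corrections of scope. First, you invoke Proposition \ref{prop:isolsing} \emph{inside} the argument, but in the paper's logic that proposition merely verifies the hypothesis ``$W_{\dm(\lambda)}$ has an isolated critical point at the origin'' under the MPCP condition; the hypothesis is already built into the statement of Proposition \ref{prop:kstab}, and Proposition \ref{prop:isolsing} is deployed only later, when Proposition \ref{prop:kstab} is applied in Corollary \ref{cor:indOsplgen}. Second, you frame the obstacle as ``reconciling the $\G_\Delta$-graded, $\Gamma$-equivariant setting'' with the local-algebra arguments, but Proposition \ref{prop:kstab} concerns only the $\G_\Delta$-graded category $MF = MF_{\G_\Delta}(S_\Lambda,W_{\dm(\lambda)})$. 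The descent from $MF$ to the $\Gamma$-equivariant category $\grmf_\Gamma = \mathbf{s}^* MF$ is handled as a separate step, in Corollary \ref{cor:indOsplgen}, via the induction/restriction adjunction $\mathsf{ind}\dashv\mathsf{res}\dashv\mathsf{ind}$. So when adapting the references you only need the graded case here, and the equivariant reduction is dealt with afterwards by a formal Frobenius-pair argument rather than by reworking the local-algebra input.
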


Now we recall that $\mathbf{s}^* MF$ is, by definition, a subcategory of $MF$ (see \cite[Definition 2.65]{Sheridan2015}). 
One thinks of $\mathbf{s}^* MF$ as a $G^*$-equivariant version of $MF$; so if $\mathsf{res}: \mathbf{s}^* MF \hookrightarrow MF$ denotes the corresponding faithful (but not full) embedding, there is an adjoint functor $\mathsf{ind}: MF \to \mathbf{s}^* MF$ given by induction. 
Explicitly, let $s: \Z^I/\ol{M} \to \G_\Delta$ be a set-theoretic splitting of the map
\begin{equation} \G_\Delta \to \coker(\mathbf{s}) \cong \Z^I/\ol{M},\end{equation} 
and define $\mathsf{ind}: MF \to \mathbf{s}^*MF$ to act on objects by a direct sum of shifts:
\begin{equation} \mathsf{ind}(K) := \bigoplus_{\vec{g} \in \Z^I/\ol{M}} K[s(\vec{g})],\end{equation}
and on morphisms by the sum over $\vec{g} \in \Z^I/\ol{M}$ of the isomorphisms
\begin{equation} hom^{\vec{h}}(K,L) \xrightarrow{\sim} hom^{\vec{h} +s(\vec{g}) - s(\vec{g}+\vec{h})}(K[s(\vec{g})],L[s(\vec{g}+\vec{h})])\end{equation}
given by the shift functors (more precisely, the rightwards shift maps $s_\r^{-s(\vec{g}),-s(\vec{g}+\vec{h})}$, see \cite[Appendix A.2]{Sheridan2017}). 
Observe that because $\Z^I/\ol{M}$ is finite, $\mathsf{ind}$ lands in $\mathbf{s}^* MF$, which we recall is the category of \emph{finite-rank} matrix factorizations.
We leave the verification of the adjunctions $\mathsf{ind} \dashv \mathsf{res} \dashv \mathsf{ind}$ to the reader (it is a version of the standard fact that restriction and induction form a Frobenius pair of functors).

\begin{cor}
\label{cor:indOsplgen}
If the MPCP condition holds, then the object $\mathsf{ind}( \mathcal{O}_0)$ split-generates $\mathbf{s}^* MF$.
\end{cor}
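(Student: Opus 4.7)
The plan is to derive split-generation of $\mathbf{s}^* MF$ by the single compact object $\mathsf{ind}(\mathcal{O}_0)$ from the known split-generation of $MF$ by $\mathcal{O}_0$ (Proposition~\ref{prop:kstab}), using Proposition~\ref{prop:splitweak} to pivot between split- and weak-generation. First I would check that the hypothesis of Proposition~\ref{prop:kstab} is met: by Lemma~\ref{lem:versality} we have $\val(\dm(\lambda)_{\vec{p}}) = \lambda_{\vec{p}}$, so Proposition~\ref{prop:isolsing} (together with the MPCP assumption) tells us that $W_{\dm(\lambda)}$ has an isolated singularity at the origin. Hence $\mathcal{O}_0$ split-generates $MF$, and since $MF$ is the subcategory of compact objects of $MF^\infty$, Proposition~\ref{prop:splitweak} shows that $\mathcal{O}_0$ \emph{weakly} generates $MF^\infty$.

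Next I would transfer this weak generation across the restriction $\mathsf{res}:\mathbf{s}^* MF \hookrightarrow MF$ via its adjoint $\mathsf{ind}$. The object $\mathsf{ind}(\mathcal{O}_0) = \bigoplus_{\vec{g} \in \Z^I/\ol{M}} \mathcal{O}_0[s(\vec{g})]$ is a \emph{finite} direct sum, because $\ol{M}$ contains $d_i \vec{e}_i$ for all $i$, so $\Z^I/\ol{M}$ is finite; in particular $\mathsf{ind}(\mathcal{O}_0)$ is a compact object of $\mathbf{s}^* MF^\infty$. Applying Proposition~\ref{prop:splitweak} in the opposite direction, it therefore suffices to show that $\mathsf{ind}(\mathcal{O}_0)$ weakly generates $\mathbf{s}^* MF^\infty$. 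Suppose $L \in \mathbf{s}^* MF^\infty$ satisfies $\Hom^i_{\mathbf{s}^*MF}(\mathsf{ind}(\mathcal{O}_0), L) = 0$ for all $i \in \Z$. Unpacking the definitions of $\mathsf{ind}$ and of $\mathbf{s}^*$, this reads
\begin{equation}
\bigoplus_{\vec{g} \in \Z^I/\ol{M}} \Hom^{\mathbf{s}(i) - s(\vec{g})}_{MF}(\mathcal{O}_0, L) = 0 \quad \text{for all } i \in \Z.
\end{equation}
The presentation of $\G_\Delta$ yields a short exact sequence $0 \to \Z \xrightarrow{\mathbf{s}} \G_\Delta \to \Z^I/\ol{M} \to 0$: for any $(k,\vec{u}) \in \G_\Delta$ mapping to $0$ in $\Z^I/\ol{M}$, the equivalence $\sim$ on $\Z \oplus \Z^I$ forces $(k,\vec{u}) \sim (k - 2\langle \vec{n}_\sigma,\vec{u}\rangle, 0) \in \mathbf{s}(\Z)$. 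Consequently the splitting $s$ induces a bijection $\Z \times \Z^I/\ol{M} \xrightarrow{\sim} \G_\Delta$, $(i,\vec{g}) \mapsto \mathbf{s}(i) - s(\vec{g})$. The vanishing above therefore gives $\Hom^{\vec{h}}_{MF}(\mathcal{O}_0, L) = 0$ for all $\vec{h} \in \G_\Delta$, and since $\mathcal{O}_0$ weakly generates $MF^\infty$ we conclude $L = 0$, as required.

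The argument is essentially bookkeeping built around the adjunction $\mathsf{ind} \dashv \mathsf{res}$, and there is no substantive analytic or geometric obstacle. The only point requiring care is the identification of $\mathbf{s}(\Z)$ with the kernel of $\G_\Delta \to \Z^I/\ol{M}$, which I noted above follows directly from the defining relation $\vec{0} \sim (2\langle \vec{n}_\sigma,\vec{m}\rangle, -\vec{m})$; everything else is an application of the two cited Propositions~\ref{prop:kstab} and~\ref{prop:splitweak}.
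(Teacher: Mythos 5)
Your proof is correct and takes essentially the same route as the paper's: reduce to weak generation of $\mathbf{s}^* MF^\infty$ via Proposition~\ref{prop:splitweak}, invoke Proposition~\ref{prop:isolsing} to meet the hypothesis of Proposition~\ref{prop:kstab}, and then transport orthogonality to $\mathcal{O}_0$ along the adjunction $\mathsf{ind}\dashv\mathsf{res}$. The only (cosmetic) difference is in how the adjunction is used: you unwind it by hand via the bijection $\Z\times\Z^I/\ol{M}\to\G_\Delta$ and then must implicitly observe that $\mathsf{res}$ reflects zero objects (true, since identity morphisms live in degree $\mathbf{s}(0)$), whereas the paper invokes the adjunction as a black box and then finishes by choosing $s(0)=0$ so that $Q$ is a direct summand of $\mathsf{ind}\circ\mathsf{res}(Q)\cong 0$, which sidesteps that last point.
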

\begin{proof}
It suffices to show that $\mathsf{ind}(\mathcal{O}_0)$ weakly generates $\mathbf{s}^* MF^\infty$, by Proposition \ref{prop:splitweak}. 
Suppose that $Q$ is in the right orthogonal complement to $\mathsf{ind}(\mathcal{O}_0)$; it follows by adjointness that $\mathsf{res}(Q)$ is in the right orthogonal complement to $\mathcal{O}_0$, and therefore $\mathsf{res}(Q) \cong 0$ by Proposition \ref{prop:kstab} (since $W_\dm$ has an isolated singularity at the origin by Proposition \ref{prop:isolsing}). 
If we choose $s(0) = 0$, then $Q$ is the direct summand of 
\begin{equation} \bigoplus_{\vec{g} \in \Z^I/\ol{M}} Q[s(\vec{g})] =: \mathsf{ind} \circ \mathsf{res}(Q) \cong 0 \end{equation}
corresponding to $\vec{g} = 0$, and therefore $Q \cong 0$.
\end{proof}

\begin{proof}[Proof of Proposition \ref{prop:bsplitgens}] We observe that $\mathsf{ind}( \mathcal{O}_0)$ is a direct sum of objects of $\mathbf{C}$, by definition. 
It follows by Corollary \ref{cor:indOsplgen} that $\mathbf{C}$ split-generates $\mathbf{s}^* MF$, which coincides with $\grmf_\Gamma(S_\Lambda,W_{\dm(\lambda)})$ by \eqref{eqn:orlov}. 
\end{proof}

\begin{proof}[Proof of Proposition \ref{prop:asplitgens}]
This can be proved using the `automatic split-generation criteria' of \cite{Perutz2015} or \cite{Ganatra2016}; we reproduce the argument of the latter. 
By Proposition \ref{prop:bsplitgens}, $D^\pi(\mathbf{C})$ is quasi-equivalent to $\grmf_\Gamma(S_\Lambda,W_{\dm(\lambda)})$. 
By \cite{Orlov2009}, this is an admissible subcategory of the stacky bounded derived category $D^bCoh(\Zmir_\dm)$. 
The latter category is smooth and proper by \cite[Theorem 6.6]{Bergh2016}, because the stack $\Zmir_\dm$ is smooth and proper by Proposition \ref{prop:isolsing}. 
It follows that $D^\pi (\mathbf{C})$ is smooth and proper, by \cite[Theorem 3.24]{Lunts2014} (see also \cite[Theorem 3.25]{Orlov2016}).
Therefore the Mukai pairing on $\HH_\bullet(\mathbf{C})$ is non-degenerate by \cite[Theorem 1.4]{Shklyarov2012}.  

We observe that $\HH^0(\mathbf{C})$ is non-zero because $\mathbf{C}$ is not quasi-equivalent to the zero category. 
Because $\fuk(X,\omega_\lambda)^{\mathsf{bc}}$ is weakly Calabi--Yau of dimension $n = \dim_\C(X)$, it follows that $\HH_{n}(\mathbf{C})^\vee \cong \HH^0(\mathbf{C}) \neq 0$ (see \cite[Lemma A.2]{Sheridan2015}). 
It follows that $\HH_{-n}(\mathbf{C}) \neq 0$, because the pairing $\HH_n(\mathbf{C}) \otimes \HH_{-n}(\mathbf{C}) \to \Lambda$ is non-degenerate.

Since the open-closed map $\EuO\EuC: \HH_\bullet(\mathbf{C}) \to \QH^{\bullet+n}(X;\Lambda)$ respects pairings, and the pairing on $\HH_\bullet(\mathbf{C})$ is non-degenerate, it follows that $\EuO\EuC$ is injective.  
In particular the map $\EuO\EuC:\HH_{-n}(\mathbf{C}) \to \QH^0(X;\Lambda)$ is non-zero, since it is injective and the domain is non-zero, so it hits the unit. 
It follows that $\mathbf{C}$ split-generates by Abouzaid's criterion \cite{Abouzaid2010a}, all of whose ingredients are contained in \S \ref{subsec:ass}.
\end{proof}

\bibliographystyle{amsalpha}
\newcommand{\etalchar}[1]{$^{#1}$}
\providecommand{\bysame}{\leavevmode\hbox to3em{\hrulefill}\thinspace}
\providecommand{\MR}{\relax\ifhmode\unskip\space\fi MR }
\providecommand{\MRhref}[2]{%
  \href{http://www.ams.org/mathscinet-getitem?mr=#1}{#2}
}
\providecommand{\href}[2]{#2}

\textsc{\small N. Sheridan, School of Mathematics, University of Edinburgh, Peter Guthrie Tait Road, Edinburgh EH9 3FD, U.K.}\\
\textit{\small Email:} \texttt{nick.sheridan@ed.ac.uk}\\

\textsc{\small I. Smith, Centre for Mathematical Sciences, University of Cambridge, Wilberforce Road, Cambridge CB3 0WB, U.K.}\\
\textit{\small Email:} \texttt{is200@cam.ac.uk}\\

\end{document}